\pgfplotsset{compat=1.18}
\newtheorem{remark}{Remark}
\newtheorem{Theorem}{Theorem}[section]
\newtheorem{Lemma}[Theorem]{Lemma}
\newtheorem{Proposition}[Theorem]{Proposition}
\newtheorem{Remark}[Theorem]{Remark}
\newcommand{\dx}{\,\mathrm{d}x}
\renewcommand{\vec}[1]{\mbox{\boldmath$#1$}}
\newcommand{\vw}{\vec{w}}
\newcommand{\vvv}{\vec{v}}
\newcommand{\vu}{\vec{u}}
\newcommand{\vf}{\vec{f}}
\renewcommand{\vec}[1]{\mbox{\boldmath$#1$}}
\renewcommand{\vec}[1]{\mbox{\boldmath$#1$}}
\begin{document}
\title{Energy stable gradient flow schemes for shape and topology optimization in Navier-Stokes flows\thanks{This work was supported in part by the National Key Basic Research Program under grant 2022YFA1004402, the Science and Technology Commission of
Shanghai Municipality (No. 22ZR1421900 and 22DZ2229014), and the National Natural Science Foundation of China under grant (No. 12071149).
}}

\author{Jiajie Li \thanks{School of Mathematical Sciences, Shanghai Jiao Tong University, Shanghai 200240, China. E-mail: lijiajie20120233@163.com}
\and Shengfeng Zhu \thanks{Key Laboratory of MEA (Ministry of Education) \& Shanghai Key Laboratory of Pure Mathematics and Mathematical Practice \& School of Mathematical Sciences, East China Normal University, Shanghai 200241, China. E-mail: sfzhu@math.ecnu.edu.cn}}

\maketitle

\begin{abstract}
We study topology optimization governed by the incompressible Navier-Stokes flows using a phase field model. Novel stabilized semi-implicit schemes for the gradient flows of Allen-Cahn and Cahn-Hilliard types are proposed for solving the resulting optimal control problem. Unconditional energy stability is shown for the gradient flow schemes in continuous and discrete spaces. Numerical experiments of computational fluid dynamics in 2d and 3d show the effectiveness and robustness of the optimization algorithms proposed.
\end{abstract}

{\bf Keywords: }  Topology optimization, incompressible Navier-Stokes equations, stabilized gradient flow, energy stability, phase field method



\section{Introduction}
Shape and topology optimization \cite{BS} of computational fluid dynamics is a popular topic with applications such as auto-coronary bypass anastomoses in medical science \cite{QDMV}, the laminar flow wing design in aeronautics \cite{MP} and pipe flow \cite{DFO}. Such optimal control problems in fluid flow \cite{MP} aim to seek a configuration or layout for optimizing some objective (e.g., energy dissipation and geometric inverse problem) subject to geometric and physical constraints such as incompressible fluid flows \cite{PF,PFS,LiCMAME,DLWW2013,GSH2005,LiNMPDE2023,SSchulz2010} or compressible Navier-Stokes equations \cite{PS}. Compared to shape optimization by adjusting the profile of geometric boundary to obtain better configuration \cite{GLZ2022}, topology optimization can perform both shape and topological changes of structures.  Numerical realization of topology optimization in fluid flows can be performed via the variable density method \cite{Borrvall2003}, topological derivative \cite{Guillaume}, level set method \cite{ZL}, phase field method \cite{LiFu, GarckeCOCV}, etc.

As diffusive interface tracking techniques, the phase field method \cite{GarckeCOCV,AMW,GHHK,Garcke2016,JLXZ2024phaseField} is introduced for minimizing general volume and surface functionals constrained by incompressible Navier-Stokes flows. The porous medium approach \cite{Borrvall2003} proposed by Borrvall and Petersson enables the governing equations to be defined on a fixed domain with a variable linear term characterizing the permeability. The main idea of the phase field model of topology optimization is to combine both objective and so-called Ginzburg–Landau energy to construct the total free energy. The latter is a diffuse interface approximation of perimeter regularization implying the existence of the optimal control problem \cite{GHHK}. For topology optimization constrained by Navier-Stokes flow, Garcke et al. \cite{Garcke2016,GarckeCOCV} discussed the differentiability of the solution to the phase field function, the first-order necessary condition from sensitivity analysis, and sharp interfacial asymptotic analysis.

The gradient flow method actually was a powerful tool for minimization of nonlinear or multi-physical coupling type of energy functional for many problems (see, e.g., \cite{Shen2019,DF}). From the numerical perspective, a gradient flow scheme is generally evaluated on the energy dissipation and its computational efficiency \cite{Shen2019}. The energy dissipative gradient flow scheme is of crucial importance to topology optimization implying that the sequence generated by the algorithm is convergent and monotonous. Unlike phase field models in the physical background such as interface dynamics \cite{AMW} or crystallization \cite{Elder2002}, the difficulty of constructing an energy dissipative gradient flow scheme for topology optimization arises from the coupling among the gradient flow, linear/nonlinear physical constraints of partial differential equations and possible extra adjoint systems. For shape design constrained by Stokes flow in the phase field model, an energy monotonic-decaying gradient flow scheme is proposed \cite{LiFu,LiYi2022} for minimization of energy dissipation via the stabilized method. An efficient iterative thresholding method \cite{LWCW2022,CLWW2022} was developed for topology optimization for Stokes and the Navier–Stokes flow. To the best of our knowledge, however, there exists no research work on energy dissipative gradient flow schemes for topology optimization constrained by the nonlinear partial differential equations such as the incompressible Navier-Stokes flow.

In this paper, we derive the gradient flow of the phase field model for topology optimization of incompressible Navier-Stokes flow and prove the energy dissipation property in continuous space by overcoming the introduction of the extra adjoint variables that are induced by nonlinear constraints. Then we propose energy dissipative gradient flow schemes (Allen-Cahn and Cahn-Hilliard types) based on the so-called stabilization method \cite{Shen1999, Shen2010} by adding stabilization terms to avoid strict time step constraints, which treats the nonlinear terms explicitly and the linear terms implicitly. For the Navier-Stokes flows by the porous medium approach, the Fr\'{e}chet differentiability of state functions with respect to phase field function \cite{Garcke2016} defines the unique solution of linearized Navier-Stokes equation. The uniform boundedness of the velocity field and pressure function is also guaranteed. These analysis results motivate us to deduce the  Lipschitz continuity of the solution to the phase field function which is of use in showing energy dissipation of the gradient flow. We note that the unconditional energy stability holds when the stabilized coefficients are larger than the coefficients depending on Lipschiz conditions, uniform bounds of the solution, and geometric area.

The rest of the paper is organized as follows:  In section 2, we briefly introduce the phase field model and the governing steady-state Navier-Stokes equations. Then the topology optimization is built. The uniform boundedness of the solution pairs and adjoint variables are shown by the assumption that the gradient of velocity is smaller than a prescribed constant depending on the viscous coefficient and geometric measure. In section 3, we proposed the generalized gradient flow schemes first to solve the topology optimization problem. The energy dissipation of the gradient flow in continuous space is addressed by combining both state and adjoint variables which is an important clue for the stability of the scheme in time discretization. Then the Lipschtiz continuity of both state and adjoint variables is shown via the error estimate analysis and Sobolev compact embedding Theorem. After that, we present the main Theorem to show the stabilized gradient flow schemes are unconditional energy stable for both Allen-Cahn and Chan-Hilliard types. We also prove the cut-off technique does not affect the monotonicity of the cost functional by gradient flow scheme. In section 4, we introduce the conforming mixed finite element method to discretize the state variables (velocity field, pressure function) as well as the adjoint variables. The algorithm with a stable semi-implicit scheme is proposed to evolve the phase field function numerically. Various numerical experiments in 2D and 3D have been tested to verify the effectiveness of the algorithms proposed. Section 4 draws brief conclusions and some potential values to generalize such gradient flow schemes to other topology optimization. 

\section{Model problem}
In this section, we introduce the phase field model for shape and topology optimization in a viscous incompressible fluid. The purpose of shape and topology optimization here is to seek an optimized configuration attaining the minimization of a given cost functional (typically dissipated energy) subject to stationary incompressible Navier-Stokes equations and some geometric constraint. 

Let $\Omega \subset \mathbb{R}^d\ (d= 2,3)$ be an open bounded domain with Lipschitz continuous boundary $\partial \Omega$. The whole domain $\Omega$ is partitioned into three disjoint subregions $\overline{\Omega}= \overline{\Omega}_1\cup \overline{\Omega}_2 \cup \overline{\Omega}_0$ where $\Omega_1$, $\Omega_2$ and $\Omega_0$ represent the fluid region, solid region and diffuse layer, respectively. The boundary $\partial\Omega$ is partitioned into both nonoverlapping Dirichlet and Neumann boundaries with $\partial\Omega=\Gamma_d\cup \Gamma_n$ (see Fig. \ref{fig1} left), where $\Gamma_d$ consists of the inlet as well as the wall, and $\Gamma_n$ represents the outlet. First, we introduce notations involving Sobolev spaces \cite{Adam}. Let $L^2(\Omega)$ be a Lebesgue space of square-integrable functions on $\Omega$. Denote $W^{1,2}(\Omega):=\{v\in L^2(\Omega)\,|\,D_i v\in L^2(\Omega),\, i=1,\cdots,d\}$ with $D_i v$ being the generalized derivative of $v$ with respect to $x_i$. Denote $H^1(\Omega):=W^{1,2}(\Omega)$ and $H_0^1(\Omega):=\{v\in H^1(\Omega)\,|\, v=0\ {\rm on}\ \partial\Omega\}$. Then denote the vectorial function spaces $\textbf{H}^1(\Omega):= H^1(\Omega)^d$, $\textbf{H}^1_0(\Omega):= H^1_0(\Omega)^d$ with its dual space $\textbf{H}^{-1}(\Omega)$, $\textbf{L}^2(\Omega):=L^2(\Omega)^d$ and  $\textbf{L}^\infty (\Omega):=L^\infty (\Omega)^d$. Denote Sobolev space with divergence-free constraint $\textbf{H}^1({\rm div0},\Omega):=\{\vw\in \textbf{H}^1(\Omega)\,|\, \nabla\cdot\vw =0 \ {\rm in}\ \Omega\}$. Let us use a same notation to define inner products of $L^2$ type by $(\zeta_1, \zeta_2):=\int_{\Omega} \zeta_1 \zeta_2$ and $(\bm\zeta_1, \bm\zeta_2):=\int_{\Omega} \bm\zeta_1\cdot \bm\zeta_2$ for any scalar functions $ \zeta_1,\zeta_2\in L^2(\Omega)$ and vectorial functions $\bm\zeta_1,\bm\zeta_2\in \textbf{L}^2(\Omega)$.
The phase field function $\phi\in H^1(\Omega)$ can be seen as a ``density" function (see Fig. \ref{fig1} right) such that 
\begin{equation}
\left\{
\begin{aligned}
&\phi(\bm x) = 0, \quad && \bm x\in \Omega_1,\\
&\phi(\bm x) = 1, \quad && \bm x\in \Omega_2,\\
&0<\phi(\bm x)<1, \quad && \bm x\in \Omega_0.
\end{aligned}
\right.
\end{equation}
\begin{figure}[htbp]
\centering
\begin{tikzpicture}[scale=3.2]
\draw[thick] (0.0,0.0) -- (1.5,0.0) -- (1.5,1.0) -- (0.0,1.0) -- cycle;
\draw[thick, fill=gray!30!white]  (0.0,0.85) .. controls (0.45,0.8).. (0.75, 0.7) .. controls (1.05,0.8).. (1.5, 0.85) -- (1.5,0.65) .. controls (1.35,0.6) .. (0.9,0.5) .. controls (1.35,0.4) .. (1.5,0.35) -- (1.5,0.15).. controls (1.05,0.2)..(0.75, 0.3).. controls (0.45,0.2).. (0.0,0.15) -- (0.0,0.35) 
.. controls (0.15,0.4) .. (0.6,0.5)  .. controls (0.15,0.6) .. (0.0,0.65) -- cycle;
\draw (0.8, 0.375) node[scale=0.8] {$\Omega_2(\phi=1)$};
\draw (1.15,0.07) node[scale=0.8] {$\Omega_1(\phi=0)$};
\draw (1.4,0.75) node[scale=0.8] {$\Gamma_n$};
\draw (1.4,0.25) node[scale=0.8] {$\Gamma_n$};
\draw (-0.1,0.5) node[scale=0.8] {$\Gamma_d$};
\draw (1.4,0.5) node[scale=0.8] {$\Gamma_d$};
\draw (0.75,1.05) node[scale=0.8] {$\Gamma_d$};
\draw (0.75,-0.05) node[scale=0.8] {$\Gamma_d$};
\end{tikzpicture}
\qquad\qquad
\begin{tikzpicture}[scale=3.15]
\draw[thick] (0.0,0.0) -- (1.0,0.0) -- (1.0,1.0) -- (0.0,1.0) -- cycle;
\draw[thick] (0.45,0.0) .. controls (0.65,0.5) .. (0.55,1.0);
\draw[thick, dashed, postaction={pattern=north east lines}] (0.5,0.0) .. controls (0.7,0.5) .. (0.6,1.0) -- (0.5, 1.0) .. controls (0.6, 0.5).. (0.4,0.0) -- cycle;
\draw (0.25,0.5) node[scale=1] {Phase A};
\draw (0.8,0.25) node[scale=1] {Phase B};
\draw (0.2,0.75) node[scale=1] {$\Omega$};
\draw [arrows = {-Latex[width=3pt, length=3pt]}]   (0.4,-0.025)--(0.5,-0.025);
\draw [arrows = {-Latex[width=3pt, length=3pt]}]   (0.5,-0.025)--(0.4,-0.025);
\draw (0.5,-0.075) node[scale=0.75] {$0<\phi<1$};
\end{tikzpicture}
\caption{Illustrations of design domain with subdomains represented implicitly by phase field function (left) and phases with diffuse layer (right).}
\label{fig1}
\end{figure}
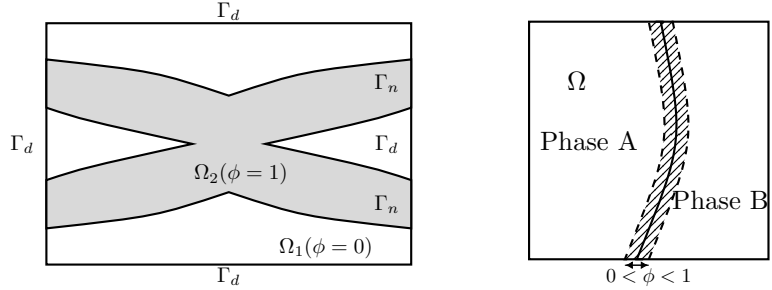
\subsection{Governing equations}
Let the positive number $\mu$ be the viscous coefficient. Define the following bilinear form $a(\cdot , \cdot)$ and trilinear form $b(\cdot, \cdot, \cdot)$, respectively 
\begin{equation*}
\begin{aligned}
&a(\vu,\vvv) := \mu \int_{\Omega}{\rm D} \vu :{\rm D} \vvv &&\forall \, \vu , \vvv \in \textbf{H}^1(\Omega),\\
&b(\vu,\vvv,\vw) :=\int_{\Omega}(\vu \cdot\nabla) \vvv \cdot \vw   \quad &&\forall \, \vu, \vvv, \vw \in \textbf{H}^1(\Omega),
\end{aligned}
\end{equation*}
where the vectorial functions are given by $\vu = [u_1,u_2,\cdots, u_d]^{\rm T}$, $\vvv = [v_1,v_2,\cdots, v_d]^{\rm T}$ and $\vw = [w_1,w_2,\cdots, w_d]^{\rm T}$, respectively. We refer to \cite[Lemma IX.1.1 and Lemma IX.2.1]{Galdi2011} and \cite{EMJ} for some useful properties on the trilinear form.
\begin{Lemma}\label{trlinearLemma}
The trilinear form $b(\cdot, \cdot, \cdot)$ is well-defined and continuous in the space $\textbf{H}^1_0(\Omega)\times \textbf{H}^1(\Omega)\times\textbf{H}^1_0(\Omega)$. The following estimate holds
\begin{equation}
\vert b(\vu,\vvv,\vw)\vert \leq K_{\Omega} \| \nabla\vu\|_{\textbf{L}^2(\Omega)} \| \nabla\vvv\|_{\textbf{L}^2(\Omega)} \| \nabla\vw\|_{\textbf{L}^2(\Omega)},\qquad \forall \vu,\vw\in \textbf{H}^1_0(\Omega),\  \vvv\in \textbf{H}^1(\Omega),
\end{equation}
where
\begin{equation*}
K_{\Omega}:=\left\{
\begin{aligned}
&\frac{1}{2}\vert \Omega\vert^{1/2},\quad & {\rm if}\ d=2,\\
&\frac{2\sqrt{2}}{3}\vert \Omega\vert^{1/6},\quad & {\rm if}\ d=3,
\end{aligned}\right.
\end{equation*}
with $|\Omega|$ being the Lebesgue measure of $\Omega$. Furthermore, the following properties hold:
\begin{equation}
\begin{aligned}
&b(\vu,\vvv,\vvv)=0, &&\forall\,\vu\in \textbf{H}^1({\rm div0},\Omega),\ \vvv\in \textbf{H}_0^1(\Omega),\\
&b(\vu,\vvv,\vw)=-b(\vu,\vw,\vvv),&&\forall\,\vu\in\textbf{H}^1({\rm div0},\Omega),\vvv,\vw\in \textbf{H}_0^1(\Omega).
\end{aligned}
\end{equation}
\end{Lemma}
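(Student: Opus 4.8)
The plan is to prove the three assertions in order: the quantitative bound (which also yields well-definedness and continuity) first, and then the two algebraic identities by integration by parts. For the bound I would start from the pointwise estimate $|b(\vu,\vvv,\vw)|\le\int_\Omega|\vu|\,|\nabla\vvv|\,|\vw|\dx$ and apply the generalized Hölder inequality with the exponent triple $(4,2,4)$, since $\tfrac14+\tfrac12+\tfrac14=1$, giving
\[
|b(\vu,\vvv,\vw)|\le\|\vu\|_{\textbf{L}^4(\Omega)}\,\|\nabla\vvv\|_{\textbf{L}^2(\Omega)}\,\|\vw\|_{\textbf{L}^4(\Omega)}.
\]
The middle factor requires nothing beyond $\vvv\in\textbf{H}^1(\Omega)$, so it remains to bound the two $\textbf{L}^4$ factors by gradient norms. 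To this end I would invoke the Ladyzhenskaya--Gagliardo--Nirenberg inequality on $\textbf{H}^1_0(\Omega)$ (after extension by zero to $\mathbb{R}^d$): $\|\vu\|_{\textbf{L}^4(\Omega)}^2\le c_2\|\vu\|_{\textbf{L}^2(\Omega)}\|\nabla\vu\|_{\textbf{L}^2(\Omega)}$ when $d=2$, and $\|\vu\|_{\textbf{L}^4(\Omega)}^4\le c_3\|\vu\|_{\textbf{L}^2(\Omega)}\|\nabla\vu\|_{\textbf{L}^2(\Omega)}^3$ when $d=3$ (and identically for $\vw$). The low-order $\textbf{L}^2$ factor can then be absorbed through the self-improving Cauchy--Schwarz bound $\|\vu\|_{\textbf{L}^2(\Omega)}\le|\Omega|^{1/4}\|\vu\|_{\textbf{L}^4(\Omega)}$: substituting and cancelling one power of $\|\vu\|_{\textbf{L}^4(\Omega)}$ leaves $\|\vu\|_{\textbf{L}^4(\Omega)}\le C|\Omega|^{1/4}\|\nabla\vu\|_{\textbf{L}^2(\Omega)}$ for $d=2$ and $\|\vu\|_{\textbf{L}^4(\Omega)}\le C|\Omega|^{1/12}\|\nabla\vu\|_{\textbf{L}^2(\Omega)}$ for $d=3$. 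Inserting the two such factors produces exactly the powers $|\Omega|^{1/2}$ and $|\Omega|^{1/6}$, and since the resulting trilinear bound is a product of norms, continuity on $\textbf{H}^1_0(\Omega)\times\textbf{H}^1(\Omega)\times\textbf{H}^1_0(\Omega)$ follows at once.

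For the two identities I would argue by density, first on fields in $C^\infty_c(\Omega)$ and then passing to the limit in $\textbf{H}^1_0(\Omega)$. Writing $b(\vu,\vvv,\vw)=\int_\Omega u_i(\partial_i v_j)w_j\dx$ with summation over $i,j$ and integrating by parts in $x_i$, the boundary term vanishes because $\vw\in\textbf{H}^1_0(\Omega)$ has zero trace, so that
\[
b(\vu,\vvv,\vw)=-\int_\Omega(\partial_i u_i)v_j w_j\dx-\int_\Omega u_i(\partial_i w_j)v_j\dx.
\]
The first integral is $-\int_\Omega(\nabla\cdot\vu)(\vvv\cdot\vw)\dx=0$ since $\vu\in\textbf{H}^1({\rm div0},\Omega)$, while the second equals $-b(\vu,\vw,\vvv)$; this is the skew-symmetry. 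Choosing $\vw=\vvv$ gives $b(\vu,\vvv,\vvv)=-b(\vu,\vvv,\vvv)$ and hence $b(\vu,\vvv,\vvv)=0$; the same conclusion also follows directly from $b(\vu,\vvv,\vvv)=\tfrac12\int_\Omega u_i\partial_i|\vvv|^2\dx=-\tfrac12\int_\Omega(\nabla\cdot\vu)|\vvv|^2\dx=0$, the boundary contribution again vanishing by the zero trace of $\vvv$.

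The Hölder splitting and the integration by parts are routine; I expect the main obstacle to be the precise bookkeeping of constants in the embedding step, that is, verifying that the Ladyzhenskaya constants $c_2,c_3$ combine with the factor $|\Omega|^{1/4}$ from the self-improving step to yield exactly $\tfrac12|\Omega|^{1/2}$ in $d=2$ and $\tfrac{2\sqrt2}{3}|\Omega|^{1/6}$ in $d=3$; the precise values are those recorded in \cite[Lemma IX.1.1 and Lemma IX.2.1]{Galdi2011}. A secondary technical point is justifying the integration by parts under only $H^1$ regularity with $\nabla\cdot\vu=0$ in the distributional sense, which is handled by the density of $C^\infty_c(\Omega)$ fields together with the continuity just established.
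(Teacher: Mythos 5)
Your proposal is correct, but be aware that the paper itself contains no proof of this lemma: it is stated verbatim with a pointer to \cite[Lemmas IX.1.1 and IX.2.1]{Galdi2011}, and your argument is precisely the standard proof from that reference --- H\"older with exponents $(4,2,4)$, the Ladyzhenskaya inequalities, and the absorption step $\|\vu\|_{\textbf{L}^2(\Omega)}\le|\Omega|^{1/4}\|\vu\|_{\textbf{L}^4(\Omega)}$ for the quantitative bound, followed by integration by parts for the two identities. Two small remarks on your execution: with the elementary Ladyzhenskaya constants ($c_2=2^{-1/2}$ in $d=2$, $c_3=2^{-1}$ in $d=3$, obtained from the usual ``half-trick'' and valid for vector fields since $|\nabla|\vu||\le|\nabla\vu|$ a.e.), your scheme yields exactly $\tfrac12|\Omega|^{1/2}$ for $d=2$ and the even smaller constant $2^{-2/3}|\Omega|^{1/6}$ for $d=3$, so the stated bound holds a fortiori and no delicate bookkeeping is actually needed. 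In the density step, make sure the approximation by $C^\infty_c(\Omega)$ fields is applied only to $\vvv$ and $\vw$ while $\vu$ is held fixed: integrating by parts against $\psi=\vvv\cdot\vw\in C^\infty_c(\Omega)$ is then nothing but the definition of the weak divergence of $\vu$, and this matters because $C^\infty_c(\Omega)^d$ is \emph{not} dense in $\textbf{H}^1({\rm div0},\Omega)$ (its $\textbf{H}^1$-closure lies in $\textbf{H}^1_0(\Omega)$), so $\vu$ itself cannot be so approximated; the limit passage in $(\vvv,\vw)$ then uses only the first H\"older step $|b(\vu,\vvv,\vw)|\le\|\vu\|_{\textbf{L}^4(\Omega)}\|\nabla\vvv\|_{\textbf{L}^2(\Omega)}\|\vw\|_{\textbf{L}^4(\Omega)}$, which does not require $\vu\in\textbf{H}^1_0(\Omega)$.
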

The permeability function for medium \cite[page 225]{Garcke2016} $\alpha(\cdot)\in C^{1,1}(\mathbb{R})$ is non-negative with $\alpha(1)=0, \alpha(0)=\alpha_0>0$ satisfying
\begin{equation}
\begin{aligned}
& \alpha(s)=\alpha(0),\quad \forall\ s\leq 0,\\
& \alpha(s)=\alpha(1),\quad \forall\ s\geq 1.
\end{aligned}
\end{equation}
Let the Cauchy stress tensor be $\sigma(\vu,p):=-p{\rm \bm I}+\mu({\rm D}\vu+{\rm D}\vu^{\rm T})$ with ${\rm \bm{I}}\in\mathbb{R}^{d\times d}$ being an identity tensor. Let $\vu_d: \Gamma_d\rightarrow \mathbb{R}^d$ be a prescribed velocity field imposed on the inlet and the wall. Define the Sobolev spaces $\textbf{H}^1_d (\Omega):=\{\vvv\in \textbf{H}^1(\Omega)\,|\, \vvv = \vu_d \ {\rm on}\ \Gamma_d, \sigma(\vvv,p)\bm n =\bm{0}\ {\rm on}\ \Gamma_n \}$ and $\textbf{H}^1_{00} (\Omega):=\{\vvv\in \textbf{H}^1(\Omega)\,|\, \vvv = 0 \ {\rm on}\ \Gamma_d, \sigma(\vvv,q)\bm n =0\ {\rm on}\ \Gamma_n \}$ for state and adjoint variables. Consider a weak formulation of a steady-state incompressible Navier-Stokes equation: find $(\vu,p) \in \textbf{H}^1_d (\Omega)\times L^2(\Omega)$ such that
\begin{equation}\label{nsweak}
\left\{
\begin{aligned}
&a(\vu,\vvv)+b(\vu,\vu,\vvv) + (\alpha(\phi)\vu,\vvv)-(p, \nabla \cdot\vvv)= (\vf,\vvv) \quad  &&\forall\ \vvv \in \textbf{H}^1_{00}(\Omega), \\
&(\nabla \cdot\vu,q)  = 0\quad &&\forall\ q \in L^2(\Omega),
\end{aligned}\right.
\end{equation}
where $\vf \in \textbf{L}^2(\Omega)$ is a given source. The existence of (\ref{nsweak}) is valid (see \cite[Lemma 2.7]{GarckeCOCV}) for every phase field function $\phi\in L^1(\Omega)$ with $\vert \phi \vert \leq 1$ a.e. in $\Omega$. The uniqueness \cite[Lemma 2.8]{GarckeCOCV} of (\ref{nsweak}) holds by the assumption that $\Vert \nabla \vu \Vert_{\textbf{L}^2(\Omega)}< \frac{\mu}{K_{\Omega}}$. 

\subsection{Phase-field model}
The shape functional with specific purpose is defined by the phase field function and the velocity field $J(\phi,\vu(\phi)):=\int_{\Omega} j(\phi,\vu(\phi))$ where the non-negative integrand $j(\phi,\vu(\phi))$ is Fr\'{e}chet differentiable with respect to $\phi$ and $\vu$. Consider the shape functional of energy dissipation $j(\phi,\vu(\phi)):=\frac{1}{2}\mu\vert {\rm D}\vu\vert^2 +\frac{1}{2}\alpha(\phi)|\vu|^2$. Let $V(\phi):=\int_{\Omega}(1-\phi)$ be the volume of the solid region. The double well potential \cite[page 421]{JSphasefieldNM}
\begin{equation}
\omega(\phi)=\left\{
\begin{aligned}
&\phi^2,\quad &&\phi<0,\\
&\frac{1}{4}\phi^2(\phi-1)^2,\quad && \phi\in [0,1],\\
&(\phi-1)^2, && \phi>1.
\end{aligned}\right.
\end{equation}
has the formulation with uniform upper bound of its derivative.
Next, we can construct the total free energy by the summation of the Ginzburg-Landau energy, the shape functional and the least square of volume error as
\begin{equation}\label{TopPro}
\mathcal{W}(\phi,\vu(\phi)):= \frac{\epsilon_1}{2}\|\nabla\phi\|_{\mathrm{L^2(\Omega)}}^2+\frac{1}{\epsilon_2}(\omega(\phi),1)+ J(\phi,\vu(\phi))+\frac{1}{2}\beta(V(\phi)-\hat{V})^2,
\end{equation}
where $\epsilon_i >0\, (i=1,2)$ represents the thickness of the diffuse layer, $\hat{V}>0$ and $\beta>0$. Then the shape and topology optimization problem is to seek $\phi^*\in H^1(\Omega)$ such that
\begin{equation}\label{OptControlPro}
\mathcal{W}(\phi^*,\vu(\phi^*))=\inf_{\phi\in H^1(\Omega), 0\leq \phi \leq 1} \mathcal{W}(\phi,\vu(\phi)),
\end{equation}
where $(\vu(\phi), p(\phi))$ is the solution of (\ref{nsweak}) with given phase field function $\phi$. The existence of the minimizer for optimal control problem (\ref{OptControlPro}) holds by using the lower semi-continuity of the objective functional and some compactness properties (see \cite{Garcke2016,GarckeCOCV}). 
We introduce the following adjoint equations corresponding to the optimization problem (\ref{OptControlPro}).
\begin{Lemma}\label{adjLemma}
Let domain $\Omega$ be an open bounded domain. Suppose that $j(\phi,\vu)$ is differentiable with respect to $\vu$ with the Fr\'{e}chet derivative denoted as $j_{\vu}:=\frac{\delta j(\phi,\vu)}{\delta \vu}$. Then the weak form of the adjoint problem (a generalized Stokes equation) satisfies: find $(\vvv,q)\in \textbf{H}^1_{00}(\Omega)\times L^2(\Omega)$ such that
\begin{equation}\label{adjequ}\left\{
\begin{aligned}
&a(\vvv,\vw)+b(\vw,\vu,\vvv) + b(\vu,\vw,\vvv)+ (\alpha(\phi) \vvv, \vw)-(q, \nabla \cdot \vw)=(j_{\vu}, \vw) &&\ \forall \,\vw \in \textbf{H}^1(\Omega),\\
& (z, \nabla\cdot \vvv) =0, && \ \forall \,z \in L^2(\Omega),
\end{aligned}\right.
\end{equation}
where the directional derivative $(j_{\vu}, \vw) = \mu({\rm D}\vu, {\rm D}\vw)+(\alpha(\phi)\vu, \vw)$ for energy dissipation.
\end{Lemma}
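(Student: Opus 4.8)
The plan is to obtain \eqref{adjequ} as the first-order stationarity conditions of a Lagrangian in which the state system \eqref{nsweak} is appended to the cost through multipliers $(\vvv,q)$, which is equivalent to the adjoint-state elimination of the linearized velocity. Concretely, I would introduce
$$\mathcal{L}(\phi,\vu,p,\vvv,q):=J(\phi,\vu)-\Big[a(\vu,\vvv)+b(\vu,\vu,\vvv)+(\alpha(\phi)\vu,\vvv)-(p,\nabla\cdot\vvv)-(\vf,\vvv)\Big]-(q,\nabla\cdot\vu),$$
with $(\vvv,q)\in\textbf{H}^1_{00}(\Omega)\times L^2(\Omega)$ taken in the same spaces as the adjoint unknowns, so that $\vvv=0$ on $\Gamma_d$ is imposed essentially while the outflow condition $\sigma(\vvv,q)\vn=0$ on $\Gamma_n$ arises naturally. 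Requiring $\partial_p\mathcal{L}[z]=0$ for all $z\in L^2(\Omega)$ immediately returns the adjoint incompressibility $(z,\nabla\cdot\vvv)=0$, the second line of \eqref{adjequ}.

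Next I would compute the stationarity with respect to the state velocity. For an admissible direction $\vw$ (the admissible variations of $\vu\in\textbf{H}^1_d(\Omega)$ lie in $\textbf{H}^1_{00}(\Omega)$),
$$\partial_\vu\mathcal{L}[\vw]=(j_\vu,\vw)-\big[a(\vw,\vvv)+b(\vw,\vu,\vvv)+b(\vu,\vw,\vvv)+(\alpha(\phi)\vw,\vvv)\big]-(q,\nabla\cdot\vw)=0.$$
The only non-routine point is the convective term: differentiating the quadratic $b(\vu,\vu,\vvv)$ in $\vu$ by the product rule produces the two transport contributions $b(\vw,\vu,\vvv)+b(\vu,\vw,\vvv)$, which is exactly why the adjoint is an Oseen-type generalized Stokes problem and not a plain Stokes problem. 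Using the symmetry $a(\vw,\vvv)=a(\vvv,\vw)$ and $(\alpha(\phi)\vw,\vvv)=(\alpha(\phi)\vvv,\vw)$, and relabelling the free multiplier $q\mapsto-q$ (a sign convention that does not affect the constraint), this rearranges into the first line of \eqref{adjequ}.

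To confirm that these equations indeed play the adjoint role, I would verify the elimination identity directly. Differentiating \eqref{nsweak} in a direction $\psi$ gives a linearized state $(\vu',p')\in\textbf{H}^1_{00}(\Omega)\times L^2(\Omega)$ with $\nabla\cdot\vu'=0$ and momentum residual carrying the extra term $(\alpha'(\phi)\psi\vu,\cdot)$. Testing the adjoint momentum equation with $\vw=\vu'$ and the linearized state equation with $\vvv$, the two pressure pairings drop out because $\nabla\cdot\vvv=0$ and $\nabla\cdot\vu'=0$, and the four $a$- and $b$-contributions coincide, leaving $(j_\vu,\vu')=-(\alpha'(\phi)\psi\vu,\vvv)$. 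Hence the directional derivative of the reduced functional $\phi\mapsto J(\phi,\vu(\phi))$ equals $(j_\phi,\psi)-(\alpha'(\phi)\vu\cdot\vvv,\psi)$ (with $j_\phi$ the partial Fréchet derivative of $j$ in $\phi$), which is free of $\vu'$ — the defining property of the adjoint. Finally, for the energy-dissipation integrand $j(\phi,\vu)=\tfrac12\mu|{\rm D}\vu|^2+\tfrac12\alpha(\phi)|\vu|^2$, a direct Fréchet differentiation in $\vu$ yields $(j_\vu,\vw)=\mu({\rm D}\vu,{\rm D}\vw)+(\alpha(\phi)\vu,\vw)$, the stated right-hand side.

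I expect the main obstacle to be the rigorous bookkeeping of the boundary conditions and function spaces rather than the algebra. The stationarity argument naturally produces the identity for variations $\vw\in\textbf{H}^1_{00}(\Omega)$; justifying its validity against the test set in \eqref{adjequ} requires controlling the boundary terms generated in the integration by parts so that $\vvv=0$ on $\Gamma_d$ (essential) is decoupled from $\sigma(\vvv,q)\vn=0$ on $\Gamma_n$ (natural). The underlying Fréchet differentiability of $\phi\mapsto(\vu(\phi),p(\phi))$ and the unique solvability of the linearized and adjoint problems are available from \cite{Garcke2016,GarckeCOCV} under the smallness assumption $\|\nabla\vu\|_{\textbf{L}^2(\Omega)}<\mu/K_\Omega$, which via Lemma \ref{trlinearLemma} renders the adjoint bilinear form coercive and thus closes the argument.
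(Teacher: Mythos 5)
Your proposal is correct and follows essentially the same route as the paper: introduce a Lagrangian appending the weak Navier--Stokes system \eqref{nsweak} to the cost via multipliers $(\vvv,q)$, and read off \eqref{adjequ} from the first-order stationarity (KKT) conditions in $(\vu,p)$, with the two transport terms $b(\vw,\vu,\vvv)+b(\vu,\vw,\vvv)$ arising from the product rule on the quadratic convective term. Your additional verification of the elimination identity $(j_{\vu},\vu')=-(\alpha'(\phi)\psi\,\vu,\vvv)$ is not part of the paper's proof of this lemma but reproduces exactly the content of its Lemma \ref{SenStructure}, so it is a harmless (indeed clarifying) supplement rather than a different approach.
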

\begin{proof}
By utilizing the weak formulation of (\ref{nsweak}), define the following functional
\begin{equation}\label{adjointStp1}
\mathcal{L}(\phi,\vu,p;\vvv,q) := a(\vu,\vvv)+b(\vu,\vu,\vvv) + (\alpha(\phi)\vu, \vvv)-(p, \nabla \cdot\vvv)-(q, \nabla \cdot\vu)- (\vf,\vvv).
\end{equation}
The Lagrange functional is introduced associated with total free energy $\mathcal{W}(\phi,\vu)$ and (\ref{adjointStp1})
\begin{equation}\label{adjointStp2}
\hat{\mathcal{L}}(\phi,\vu,p;\vvv,q) := \mathcal{W}(\phi,\vu)-\mathcal{L}(\phi,\vu,p;\vvv,q).
\end{equation}
Then the constrained problem can be transformed into the saddle point problem \cite{Defour}
\begin{equation}
\inf_{\phi\in H^1(\Omega)} \underbrace{\mathcal{W}(\phi,\vu)}_{(\vu,p)\ \text{satisfy}\ (\ref{nsweak})} =\inf_{\phi\in H^1(\Omega)} \inf_{(\vu,p) \in \textbf{H}^1_d(\Omega)\times L^2(\Omega)}\sup_{(\vvv,q)\in \textbf{H}^1_{00}(\Omega)\times L^2(\Omega)} \hat{\mathcal{L}}(\phi, \vu,p;\vvv,q).
\end{equation}
By the Karusch-Kuhn-Tucker condition, the saddle point of $\hat{\mathcal{L}}$ is characterized by
\begin{equation}\label{adjointStp3}
\begin{aligned}
&\frac{\delta \hat{\mathcal{L}}}{\delta\vu}( d_{\vu})=\frac{\delta \hat{\mathcal{L}}}{\delta p}( d_p)=0,\quad &&\forall\ d_{\vu}\in \textbf{H}^1(\Omega), d_p \in L^2(\Omega), \\
&\frac{\delta \hat{\mathcal{L}}}{\delta\vvv}( d_{\vvv})=\frac{\delta \hat{\mathcal{L}}}{\delta q}( d_q)=0,&&\forall\ d_{\vvv}\in \textbf{H}^1(\Omega), d_q \in L^2(\Omega).
\end{aligned}
\end{equation}
The first line in (\ref{adjointStp3}) implies the adjoint equations while the second line in (\ref{adjointStp3}) leads to the steady-state incompressible Navier-Stokes equations.
\end{proof}

The following analysis is based on the assumption that the boundary condition of the velocity field is the homogeneous Dirichlet condition. 
\begin{Lemma}
Let $\phi\in H^1(\Omega)\cap L^\infty(\Omega)$ be given such that $\|\nabla\vu\|_{\textbf{L}^2(\Omega)}\leq \frac{\mu}{2K_{\Omega}}$. Then, there exists a unique solution pair $(\vu, p)\in \textbf{H}^1(\Omega)\times L^2_0(\Omega)$ of the Navier-Stokes equations (\ref{nsweak}) and a unique solution pair $(\vvv, q)\in \textbf{H}^1_{0}(\Omega)\times L^2_0(\Omega)$ of the adjoint equations (\ref{adjequ}). The solution pairs fulfill the following estimates
\begin{equation}\label{upbound}
    \| \vu\|_{\textbf{H}^1(\Omega)}+\| p\|_{L^2(\Omega)}\leq \mathcal{C}_1 (\mu, \alpha_0, \bm f, \Omega),
\end{equation}
and
\begin{equation}\label{vqbound}
    \| \vvv\|_{\textbf{H}^1(\Omega)}+\| q\|_{L^2(\Omega)}\leq \mathcal{C}_2 (\mu, \alpha_0, \bm f, \Omega),
\end{equation}
with constants $\mathcal{C}_1 (\mu, \alpha_0, \bm f, \Omega)$ and $\mathcal{C}_2 (\mu, \alpha_0, \bm f, \Omega)$ independent of $\phi$.
\end{Lemma}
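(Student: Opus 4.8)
The plan is to handle the nonlinear state system (\ref{nsweak}) and the linear adjoint system (\ref{adjequ}) separately, and in each case to obtain both well-posedness and the $\phi$-uniform a priori bound from a single energy-type test, arranged so that the dependence on $\phi$ drops out. Existence and uniqueness for $(\vu,p)$ are already recalled after (\ref{nsweak}): existence holds for every admissible $\phi$, and uniqueness under $\|\nabla\vu\|_{\textbf{L}^2(\Omega)}<\mu/K_\Omega$, which is implied by the hypothesis $\|\nabla\vu\|_{\textbf{L}^2(\Omega)}\le \mu/(2K_\Omega)$. For the bound I would test the momentum equation with $\vvv=\vu$. Since $\vu$ is divergence free and satisfies the homogeneous Dirichlet condition, Lemma \ref{trlinearLemma} gives $b(\vu,\vu,\vu)=0$ and $(p,\nabla\cdot\vu)=0$, leaving $\mu\|\nabla\vu\|_{\textbf{L}^2(\Omega)}^2+(\alpha(\phi)\vu,\vu)=(\vf,\vu)$. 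The decisive point is that $\alpha(\phi)\ge 0$, so the permeability term may be discarded from below; with Cauchy--Schwarz and the Poincar\'e/Friedrichs inequality this yields $\|\vu\|_{\textbf{H}^1(\Omega)}\le C(\Omega)\|\vf\|_{\textbf{L}^2(\Omega)}/\mu$, a bound independent of $\phi$.

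For the pressure I would isolate $(p,\nabla\cdot\vvv)$ in the momentum equation and estimate its right-hand side using continuity of $a$, the trilinear estimate of Lemma \ref{trlinearLemma}, the uniform bound $0\le\alpha(\phi)\le\alpha_0$, and the velocity estimate just obtained. The Ladyzhenskaya--Babu\v{s}ka--Brezzi inf-sup condition for the Stokes pair on $\Omega$ then controls $\|p\|_{L^2_0(\Omega)}$ by this quantity, producing $\mathcal{C}_1(\mu,\alpha_0,\vf,\Omega)$. Here again every appearance of $\phi$ is neutralised by the uniform bound $\alpha(\phi)\le\alpha_0$, so the constant depends only on $\mu,\alpha_0,\vf,\Omega$.

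The adjoint form $B(\vvv,\vw):=a(\vvv,\vw)+b(\vw,\vu,\vvv)+b(\vu,\vw,\vvv)+(\alpha(\phi)\vvv,\vw)$ is linear and non-symmetric (Oseen type). To establish coercivity I would test with $\vw=\vvv$: by Lemma \ref{trlinearLemma} one has $b(\vu,\vvv,\vvv)=0$ (since $\vu$ is divergence free and $\vvv\in\textbf{H}_0^1(\Omega)$), while $(\alpha(\phi)\vvv,\vvv)\ge 0$ and $|b(\vvv,\vu,\vvv)|\le K_\Omega\|\nabla\vu\|_{\textbf{L}^2(\Omega)}\|\nabla\vvv\|_{\textbf{L}^2(\Omega)}^2$. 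Hence $B(\vvv,\vvv)\ge(\mu-K_\Omega\|\nabla\vu\|_{\textbf{L}^2(\Omega)})\|\nabla\vvv\|_{\textbf{L}^2(\Omega)}^2\ge\tfrac{\mu}{2}\|\nabla\vvv\|_{\textbf{L}^2(\Omega)}^2$, the last inequality being exactly where the factor $\tfrac12$ in the hypothesis is consumed. Coercivity on the divergence-free subspace together with the inf-sup condition for the adjoint pressure $q$ gives existence and uniqueness of $(\vvv,q)$ by the Lax--Milgram/Brezzi framework.

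Finally, using coercivity and the representation $(j_\vu,\vvv)=\mu({\rm D}\vu,{\rm D}\vvv)+(\alpha(\phi)\vu,\vvv)$ from Lemma \ref{adjLemma}, testing with $\vw=\vvv$ gives $\tfrac{\mu}{2}\|\nabla\vvv\|_{\textbf{L}^2(\Omega)}^2\le \mu\|\nabla\vu\|_{\textbf{L}^2(\Omega)}\|\nabla\vvv\|_{\textbf{L}^2(\Omega)}+\alpha_0\|\vu\|_{\textbf{L}^2(\Omega)}\|\vvv\|_{\textbf{L}^2(\Omega)}$, which together with the already established state bound yields $\|\vvv\|_{\textbf{H}^1(\Omega)}\le\mathcal{C}_2$; the adjoint pressure follows from the same inf-sup step used for $p$. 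The main obstacle is the adjoint coercivity, namely taming the genuinely non-coercive convective contribution $b(\vvv,\vu,\vvv)$ through the smallness assumption, while simultaneously ensuring that no constant secretly depends on $\phi$. I expect the latter to be the subtle point: it succeeds only because the analysis uses the sign of the permeability term and the uniform pointwise bound $\alpha(\phi)\le\alpha_0$, and never any regularity of $\phi$ itself.
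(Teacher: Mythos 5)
Your proposal is correct, and its core coincides with the paper's own argument: the decisive step in both is to test the adjoint system \eqref{adjequ} with $\vw=\vvv$, kill $b(\vu,\vvv,\vvv)$ by incompressibility, absorb the genuinely non-coercive term $b(\vvv,\vu,\vvv)$ using the smallness hypothesis $\|\nabla\vu\|_{\textbf{L}^2(\Omega)}\leq \frac{\mu}{2K_\Omega}$ via Lemma \ref{trlinearLemma}, and finish with Poincar\'{e}, so that only $\mu$, $\alpha_0$, $\vf$, $\Omega$ survive in the constant. Where you differ is in being more self-contained: the paper delegates existence, uniqueness and the $\phi$-uniform bound \eqref{upbound} for the state pair to \cite[Lemma 2.8]{GarckeCOCV} and \cite[Lemmas 4.3, 4.9]{Garcke2016}, whereas you re-derive the state bound by the elementary test $\vvv=\vu$ (legitimate here because the paper assumes homogeneous Dirichlet data just before the lemma) and obtain adjoint well-posedness directly from your coercivity estimate via Lax--Milgram/Brezzi rather than by citation. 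For the pressures you invoke the inf-sup condition, while the paper recovers $q$ through \cite[Lemma II.2.1.1]{Sohr}; these are functionally equivalent routes to the same bound. A small technical advantage of your version is that you estimate the right-hand side through the explicit representation $(j_{\vu},\vw)=\mu({\rm D}\vu,{\rm D}\vw)+(\alpha(\phi)\vu,\vw)$ of Lemma \ref{adjLemma}, bounding it by $\mu\|\nabla\vu\|_{\textbf{L}^2(\Omega)}\|\nabla\vvv\|_{\textbf{L}^2(\Omega)}+\alpha_0\|\vu\|_{\textbf{L}^2(\Omega)}\|\vvv\|_{\textbf{L}^2(\Omega)}$, which avoids the paper's quantity $\|j_{\vu}\|_{\textbf{L}^2(\Omega)}$ — a norm that tacitly presumes the functional $j_{\vu}$ is realized by an $L^2$ function, which need not hold for $\vu$ merely in $\textbf{H}^1(\Omega)$. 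Both approaches rely on the same two mechanisms for $\phi$-independence: the sign $\alpha(\phi)\geq 0$ lets the permeability term be dropped on the left, and the uniform bound $\alpha(\phi)\leq\alpha_0$ controls it wherever it appears on the right.
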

\begin{proof}
We refer to  \cite[Lemma 2.8]{GarckeCOCV} where the existence and uniqueness results for the Navier-Stokes equations (\ref{nsweak}) are discussed requiring that $\|\nabla\vu\|_{\textbf{L}^2(\Omega)}\leq \frac{\mu}{K_{\Omega}}$. The uniform boundedness (\ref{upbound}) of the solution pair for Navier-Stokes equations (\ref{nsweak}) independent of phase field function $\phi$ is proved referring to \cite[Lemma 4.3]{Garcke2016}. For the adjoint equations (\ref{adjequ}) of the general cost functional, the solvability and uniqueness has also been discussed referring to \cite[Lemma 4.9]{Garcke2016} requiring $\|\nabla\vu\|_{\textbf{L}^2(\Omega)}\leq \frac{\mu}{K_{\Omega}}$. We are going to show the uniform boundedness of the adjoint variables by assuming $\|\nabla\vu\|_{\textbf{L}^2(\Omega)}\leq \frac{\mu}{2K_{\Omega}}$ which is of help to the following estimate. Let the test function $\bm w=\vvv$ in (\ref{adjequ}) so that $b(\vu,\vvv,\vvv)=0$ for $\nabla\vu=0$. After applying Lemma \ref{trlinearLemma}, Cauchy inequality and Poincar\'{e} inequality, we obtain
\begin{equation}
\begin{aligned}
a(\vvv,\vvv)+(\alpha(\phi)\vvv,\vvv)=&-b(\vvv,\vu,\vvv)+(j_{\vu},\vvv)\\
\leq& K_\Omega \|\nabla\vu\|_{\textbf{L}^2(\Omega)}\|\nabla\vvv\|_{\textbf{L}^2(\Omega)}^2+\|j_{\vu}\|_{\textbf{L}^{2}(\Omega)}\|\vvv\|_{\textbf{L}^2(\Omega)}\\
\leq& K_\Omega \|\nabla\vu\|_{\textbf{L}^2(\Omega)}\|\nabla\vvv\|_{\textbf{L}^2(\Omega)}^2+C_p \|j_{\vu}\|_{\textbf{L}^{2}(\Omega)}\|\nabla\vvv\|_{\textbf{L}^2(\Omega)}\\
\leq& \frac{\mu}{2}\|\nabla\vvv\|_{\textbf{L}^2(\Omega)}^2+\frac{\mu}{8}\|\nabla \vvv\|_{\textbf{L}^2(\Omega)}^2+\frac{2C_p^2}{\mu}\| j_{\vu}\|_{\textbf{L}^2(\Omega)}^2,
\end{aligned}
\end{equation}
which implies that
\begin{equation}
\frac{3\mu}{8}\|\nabla \vvv\|_{\textbf{L}^2(\Omega)}^2\leq \frac{2C_p^2}{\mu}\| j_{\vu}\|_{\textbf{L}^2(\Omega)}^2.
\end{equation}
Furthermore by \cite[Lemma II.2.1.1]{Sohr}, we obtain unique $q\in L_0^2(\Omega)$ such that 
\begin{equation}
\|q\|_{L_0^2(\Omega)}\leq \mathcal{C}_0 \|-\mu\Delta\vvv+\alpha(\phi)\vvv+{\rm D}\vu^{\rm T}\vvv -{\rm D}\vvv \vu -j_{\vu} \|_{\textbf{H}^{-1}(\Omega)},
\end{equation}
is fulfilled for some constant $\mathcal{C}_0>0$. 
\end{proof}

\section{Gradient flow}
In this section, we construct an efficient and effective method to solve the optimal control problem (\ref{OptControlPro}) numerically. The minimum can be found by introducing the gradient flow with a virtual temporal dimension. The phase field function is extended to $\phi(t, \bm x)\in H^1(0,T; H^1(\Omega))$ where $T$ is the prescribed terminal time. Given free energy functional $\mathcal{W}(\phi,\vu(\phi))$ bounded from below, denote its variational derivative as $\nu = \frac{\delta \mathcal{W}}{\delta \phi}$. The general form of the gradient flow \cite{Shen2019} can be written as 
\begin{equation}
    \frac{\partial \phi}{\partial t} = \mathcal{G}\nu \quad {\rm in}\, (0,T]\times\Omega
\end{equation}
supplemented with suitable boundary conditions. The nonpositive symmetric operator $\mathcal{G}$ is the dissipation mechanism including the $L^2$ gradient flow of Allen-Cahn type with $\mathcal{G}=- {\rm I}$ and the $H^{-1}$ gradient flow of Cahn-Hilliard type with $\mathcal{G}=-\Delta$ (the Laplacian). Since the $H^{-1}$ gradient flow preserves mass conservation, hence no extra volumetric constraint needs to be introduced. To simplify the presentation, we assume throughout the paper that the boundary conditions are chosen such that all boundary terms will vanish when integration by parts is performed. The gradient flow for solving the optimal control problem (\ref{OptControlPro}) reads: find $\phi(t,\bm x) \in H^1(0,T; H^1(\Omega))$ such that
\begin{equation}\label{GradFlowContinuous}\left\{
\begin{aligned}
\frac{\partial \phi}{\partial t} &= \mathcal{G}\nu \quad {\rm in}\, (0,T]\times\Omega,\\
\nu &= -\epsilon_1 \Delta\phi + \frac{1}{\epsilon_2}\omega^\prime(\phi)+j_{\phi}(\phi, \vu(\phi))-\alpha^\prime(\phi)\vu\cdot\vvv + \beta(V(\phi)-\hat{V})V^\prime(\phi).
\end{aligned}\right.
\end{equation}
\begin{Lemma}\label{SenStructure}
Let the assumptions in Lemma \ref{adjLemma} hold. Let $(\vu,p)$ and $(\vvv,q)$ be the solution pairs of (\ref{nsweak}) and (\ref{adjequ}), respectively. Then the Fr\'{e}chet derivative holds as
\begin{equation}\label{senEqu}
 j^\prime(\phi,\vu(\phi)) = \frac{1}{2}\alpha^\prime (\phi)\vu(\phi)^2- \alpha^\prime (\phi)\vu(\phi)\cdot\vvv(\phi).
\end{equation}
\end{Lemma}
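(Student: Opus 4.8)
The plan is to compute the total (reduced) derivative of the cost density by the chain rule and to eliminate the unknown state sensitivity through the adjoint equation \eqref{adjequ}. Writing $J(\phi,\vu(\phi))=\int_\Omega j(\phi,\vu(\phi))$ with $j(\phi,\vu)=\tfrac12\mu|{\rm D}\vu|^2+\tfrac12\alpha(\phi)|\vu|^2$, a variation in a direction $\delta\phi$ produces two contributions: the explicit one $j_\phi=\tfrac12\alpha^\prime(\phi)|\vu|^2$, and the implicit one $(j_\vu,\vu')$ carried by the state sensitivity $\vu'$, the directional derivative of $\vu(\phi)$ at $\phi$ in the direction $\delta\phi$. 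The explicit term is immediate; the entire content of the lemma is to show that $(j_\vu,\vu')=-(\alpha^\prime(\phi)\,\delta\phi\,\vu,\vvv)$, after which the stated density follows by reading off the $L^2$ representative (recall $\vu^2=|\vu|^2$).

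First I would establish the linearized state system. Differentiating \eqref{nsweak} in the direction $\delta\phi$ and using that the boundary datum $\vu_d$ is independent of $\phi$, the pair $(\vu',p')$ satisfies
\begin{equation*}
a(\vu',\vw)+b(\vu',\vu,\vw)+b(\vu,\vu',\vw)+(\alpha(\phi)\vu',\vw)-(p',\nabla\cdot\vw)=-(\alpha^\prime(\phi)\,\delta\phi\,\vu,\vw)
\end{equation*}
for all $\vw\in\textbf{H}^1_{00}(\Omega)$, together with $(\nabla\cdot\vu',z)=0$ for all $z\in L^2(\Omega)$. The existence and uniqueness of $\vu'$ is exactly the Fr\'echet differentiability of the control-to-state map established in \cite{Garcke2016}, which I would invoke rather than reprove.

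The core step is the adjoint cancellation. Since $\vvv$ solves \eqref{adjequ} and $\vu'\in\textbf{H}^1(\Omega)$ is admissible as a test function there, choosing $\vw=\vu'$ gives
\begin{equation*}
(j_\vu,\vu')=a(\vvv,\vu')+b(\vu',\vu,\vvv)+b(\vu,\vu',\vvv)+(\alpha(\phi)\vvv,\vu')-(q,\nabla\cdot\vu'),
\end{equation*}
and the pressure term drops because $\nabla\cdot\vu'=0$. On the other hand, since $\vvv\in\textbf{H}^1_{00}(\Omega)$ is an admissible test function for the linearized state system and $\nabla\cdot\vvv=0$, choosing $\vw=\vvv$ there yields
\begin{equation*}
a(\vu',\vvv)+b(\vu',\vu,\vvv)+b(\vu,\vu',\vvv)+(\alpha(\phi)\vu',\vvv)=-(\alpha^\prime(\phi)\,\delta\phi\,\vu,\vvv).
\end{equation*}
Using the symmetry of $a(\cdot,\cdot)$ and of the $L^2$ inner product, the left-hand sides of the two displays coincide term by term, so $(j_\vu,\vu')=-(\alpha^\prime(\phi)\,\delta\phi\,\vu,\vvv)$. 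Adding the explicit term gives $J'(\phi)[\delta\phi]=\int_\Omega\big(\tfrac12\alpha^\prime(\phi)|\vu|^2-\alpha^\prime(\phi)\,\vu\cdot\vvv\big)\delta\phi$, and since $\delta\phi$ is arbitrary the density \eqref{senEqu} is identified.

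The main obstacle is not the algebra but the analytic justification that $\phi\mapsto\vu(\phi)$ is differentiable with a well-defined, divergence-free sensitivity $\vu'$; this is precisely where the smallness hypothesis $\|\nabla\vu\|_{\textbf{L}^2(\Omega)}\le\frac{\mu}{2K_{\Omega}}$ and the cited differentiability result of \cite{Garcke2016} enter. A secondary point worth checking is the compatibility of the function spaces, namely that $\vu'$ inherits a homogeneous Dirichlet trace on $\Gamma_d$ so that the mutual substitutions $\vw=\vvv$ and $\vw=\vu'$ are both legitimate.
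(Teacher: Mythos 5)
Your proposal is correct and follows essentially the same route as the paper: both invoke the Fr\'echet differentiability of the control-to-state map from \cite{Garcke2016}, write down the linearized state system, test the adjoint equation with the state sensitivity and the linearized system with the adjoint $\vvv$, and cancel the matching terms to obtain $(j_{\vu},\vu')=-(\alpha^\prime(\phi)\vu\cdot\vvv,\delta\phi)$. Your version is in fact slightly more careful than the paper's, since you make explicit why the pressure terms vanish (divergence-free constraints) and why the mutual substitutions of test functions are admissible.
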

\begin{proof}
The existence of variational derivative $\vu_{\phi}:=\langle\vu^\prime(\phi), \zeta \rangle$ and $p_{\phi}:=\langle p^\prime(\phi), \zeta\rangle$ hold for all $\zeta \in H^1(\Omega)$ by the implicit Theorem \cite{Garcke2016} implying that
\begin{equation}\label{SenStep1}
a(\vu_{\phi},\vvv)+b(\vu_{\phi},\vu,\vvv)+b(\vu,\vu_{\phi},\vvv) + (\alpha(\phi)\vu_{\phi},\vvv)-(p_{\phi}, \nabla \cdot\vvv)-(q, \nabla \cdot \vu_{\phi})+(\alpha^\prime (\phi)\vu\cdot\vvv, \zeta)= 0.
\end{equation}
By the chain rule of variational differentiation, we obtain
\begin{equation}\label{SenStep2}
\begin{aligned}
\bigg\langle \frac{\delta J(\phi,\vu(\phi))}{\delta \phi}, \zeta\bigg\rangle&=\langle J_\phi (\phi,\vu(\phi)), \zeta \rangle+\bigg\langle \frac{\delta J(\phi,\vu(\phi))}{\delta \vu}, \vu_{\phi}\bigg\rangle \\
&=\langle J_\phi (\phi,\vu(\phi)), \zeta\rangle+(j_{\vu}, \vu_{\phi}).
\end{aligned}
\end{equation}
Taking the test function with $(\vu_{\phi},p_{\phi})$ in adjoint equations (\ref{adjequ}), we have
\begin{equation}\label{SenStep3}
a(\vvv,\vu_{\phi})+{b}(\vu_{\phi},\vu,\vvv) + b(\vu,\vu_{\phi},\vvv)+ (\alpha(\phi) \vvv, \vu_{\phi})-(p_{\phi}, \nabla \cdot\vvv)-(q, \nabla \cdot \vu_{\phi})=(j_{\vu}, \vu_{\phi}).
\end{equation}
Combing (\ref{SenStep1}), (\ref{SenStep2}) and (\ref{SenStep3}) yields
\begin{equation}
(j_{\vu}, \vu_{\phi}) =- (\alpha^\prime (\phi)\vu\cdot\vvv, \zeta),
\end{equation}
which allows the conclusion to hold.
\end{proof}

\begin{Proposition}\label{enerDisCon}
Under the gradient flow (\ref{GradFlowContinuous}), the energy stable holds for all $t>0$ such that
\begin{equation}\label{jojew}
    \frac{\delta \mathcal{W}}{\delta t} = (\nu, \mathcal{G}\nu)\leq 0.
\end{equation}
\end{Proposition}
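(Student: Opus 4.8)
The plan is to differentiate the total free energy along a trajectory of the gradient flow and to identify the resulting expression with the $L^2$-pairing $(\nu,\frac{\partial\phi}{\partial t})$. Since $\mathcal{W}(\phi,\vu(\phi))$ depends on the virtual time $t$ only through $\phi(t,\cdot)$ and through the state $\vu(\phi)$, the chain rule yields
\begin{equation*}
\frac{d}{dt}\mathcal{W}(\phi,\vu(\phi))=\left(\frac{\delta\mathcal{W}}{\delta\phi},\frac{\partial\phi}{\partial t}\right),
\end{equation*}
where $\frac{\delta\mathcal{W}}{\delta\phi}$ denotes the \emph{total} (reduced) variational derivative that already incorporates the dependence of $\vu$ on $\phi$. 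The whole argument then reduces to checking that this reduced gradient coincides with the potential $\nu$ appearing in the second line of (\ref{GradFlowContinuous}); once that identification is made, inserting the flow equation $\frac{\partial\phi}{\partial t}=\mathcal{G}\nu$ gives $\frac{d}{dt}\mathcal{W}=(\nu,\mathcal{G}\nu)$, and (\ref{jojew}) follows from the nonpositivity of the symmetric operator $\mathcal{G}$.

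To verify the identification I would differentiate $\mathcal{W}$ term by term. The Ginzburg--Landau contribution $\frac{\epsilon_1}{2}\|\nabla\phi\|_{L^2(\Omega)}^2$ produces $\epsilon_1(\nabla\phi,\nabla\frac{\partial\phi}{\partial t})$, which after integration by parts and the standing assumption that the boundary terms vanish becomes $(-\epsilon_1\Delta\phi,\frac{\partial\phi}{\partial t})$. The double-well and volume-penalty terms depend on $\phi$ explicitly and contribute $\frac{1}{\epsilon_2}\omega'(\phi)$ and $\beta(V(\phi)-\hat V)V'(\phi)$, respectively. The only term requiring care is the shape functional $J(\phi,\vu(\phi))$, in which $\vu$ is constrained to solve (\ref{nsweak}); here I would invoke Lemma \ref{SenStructure}, whose conclusion $j'(\phi,\vu(\phi))=\frac{1}{2}\alpha'(\phi)\vu(\phi)^2-\alpha'(\phi)\vu(\phi)\cdot\vvv(\phi)$ is exactly the reduced gradient of $J$, written as $j_\phi(\phi,\vu)-\alpha'(\phi)\vu\cdot\vvv$. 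Summing the four pieces reproduces $\nu$ verbatim, so that $\frac{\delta\mathcal{W}}{\delta\phi}=\nu$ as required.

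The substantive obstacle is confined to the shape-functional term: a direct computation produces the sensitivity contribution $(j_{\vu},\vu_\phi)$ involving the derivative $\vu_\phi$ of the velocity with respect to $\phi$, which cannot be evaluated explicitly. The adjoint system (\ref{adjequ}) is introduced precisely to remove it — testing (\ref{adjequ}) with the linearised state $(\vu_\phi,p_\phi)$ and comparing with the linearised Navier--Stokes identity (\ref{SenStep1}) rewrites $(j_{\vu},\vu_\phi)$ as $-(\alpha'(\phi)\vu\cdot\vvv,\zeta)$, which is the content of Lemma \ref{SenStructure} and explains why the adjoint variable $\vvv$ enters $\nu$. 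I would stress that $\vvv$ is not an independent time-dependent unknown inside $\mathcal{W}$; it is merely evaluated at the current $\phi(t)$ to express the reduced gradient, so no extra time derivative of $\vvv$ arises. Finally, with $\nu=\frac{\delta\mathcal{W}}{\delta\phi}$ in hand the identity $\frac{d}{dt}\mathcal{W}=(\nu,\mathcal{G}\nu)$ is immediate, and nonpositivity of $\mathcal{G}$ gives the claim pointwise in $t$: for the Allen--Cahn choice $(\nu,\mathcal{G}\nu)=-\|\nu\|_{L^2(\Omega)}^2\le 0$, while for the Cahn--Hilliard choice integration by parts gives $(\nu,\mathcal{G}\nu)=-\|\nabla\nu\|_{L^2(\Omega)}^2\le 0$, establishing (\ref{jojew}).
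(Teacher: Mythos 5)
Your proposal is correct and takes essentially the same approach as the paper: the paper's proof differentiates $\mathcal{W}$ in time and re-derives the adjoint cancellation $\big(\tfrac{\delta j}{\delta \phi},\phi_t\big)=\big(j_\phi-\alpha^\prime(\phi)\vu\cdot\vvv,\phi_t\big)$ inline, by testing (\ref{adjequ}) with $(\vu_t,p_t)$ and differentiating (\ref{nsweak}) in $t$, which is exactly Lemma \ref{SenStructure} applied in the direction $\zeta=\phi_t$ — the lemma you invoke directly. With that identification of the reduced gradient with $\nu$, inserting $\partial_t\phi=\mathcal{G}\nu$ and using the nonpositivity of $\mathcal{G}$ concludes, just as in the paper.
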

\begin{proof}
Take the differentiation of the total energy in (\ref{TopPro}) with respect to the temporal variable $t$ yielding that
\begin{equation}\label{EneryDecayConS1}
\frac{\delta \mathcal{W}}{\delta t} = (\epsilon \nabla \phi, \nabla \phi_t) + (\omega^\prime (\phi),\phi_t)+\beta(V(\phi)-\hat{V})(V^{\prime}(\phi),\phi_t) + \bigg( \frac{\delta j(\phi,\vu(\phi))}{\delta \phi}, \phi_t\bigg),
\end{equation}
where the derivative denotes $\phi_t:= \frac{\partial \phi}{\partial t}$. Next for the last term in (\ref{EneryDecayConS1}), we obtain 
\begin{equation}\label{EneryDecayConS2}
\begin{aligned}
\bigg( \frac{\delta j(\phi,\vu(\phi))}{\delta \phi}, \phi_t\bigg) &=(j_\phi ,\phi_t) + (j_{\vu}, \vu_t),
\end{aligned}
\end{equation}
according to the chain rule in differentiation $(\vu_t, p_t):= (\vu_\phi \frac{\partial \phi}{\partial t},p_\phi \frac{\partial \phi}{\partial t})$. After using the adjoint equations (\ref{adjequ}) with test functions $(\vu_t, p_t)$, we have
\begin{equation}\label{EneryDecayConS3}
a(\vvv,\vu_t)+b(\vu_t,\vu,\vvv) + b(\vu,\vu_t,\vvv)+ (\alpha(\phi) \vvv, \vu_t)-(q, \nabla \cdot \vu_t)-(p_t, \nabla \cdot \vvv)=(j_{\vu}, \vu_t).
\end{equation}
Differentiate (\ref{nsweak}) with respect to $t$ to derive
\begin{equation}\label{EneryDecayConS4}
a(\vu_t,\vvv)+b(\vu_t,\vu,\vvv) +b(\vu,\vu_t,\vvv)+ (\alpha(\phi)\vu_t,\vvv)+ (\alpha^\prime (\phi)\vu\cdot \vvv,\phi_t)-(p_t, \nabla \cdot\vvv) - (q,\nabla\cdot \vu_t)= 0.
\end{equation}
Combining (\ref{EneryDecayConS2}) - (\ref{EneryDecayConS4}), we obtain
\begin{equation}\label{EneryDecayConS5}
    \bigg( \frac{\delta j(\phi,\vu(\phi))}{\delta \phi}, \phi_t\bigg) = \big(j_{\phi}-\alpha^\prime (\phi)\vu\cdot \vvv , \phi_t\big).
\end{equation}
By using the integration by parts and combining (\ref{GradFlowContinuous}), (\ref{EneryDecayConS1}) and (\ref{EneryDecayConS5}), we obtain \eqref{jojew}
thanks to the nonnegative operator of $\mathcal{G}$.
\end{proof}

Now, we are going to construct the energy dissipative gradient flow scheme for solving the optimal control problem (\ref{OptControlPro}).  Let $T>0$ and $0=t_0 < t_1<\cdots < t_n<t_{n+1}<\cdots< t_N=T$ denote a time partition with time step size $\tau_n:=t_{n+1}-t_{n}$ $(n=0,1,\cdots,N-1)$ for $N\in\mathbb{N}$. For simplicity, consider a uniform time discretization with $t_n=n\tau$, where the number of time levels $N=T / \tau$. The first-order semi-implicit scheme with generalized stabilization of gradient flow (\ref{GradFlowContinuous}) reads
\begin{equation}\label{GradFlowScheme}\left\{
\begin{aligned}
\frac{\phi^{n+1}-\phi^n}{\tau} &= \mathcal{G}\nu^{n+1},\\
\nu^{n+1} &= -\epsilon_1 \Delta\phi^{n+1} + \mathcal{U}(\phi^n, \vu^n, \vvv^n) + \mathcal{S}(\phi^{n+1}-\phi^n),
\end{aligned}\right.
\end{equation}
where the nonlinear term 
$$\mathcal{U}(\phi^n, \vu^n, \vvv^n):= \frac{1}{\epsilon_2} \omega^\prime(\phi^{n}) +j_\phi(\phi^{n},\vu^{n})-\alpha^\prime(\phi^{n})\vu^{n}\cdot\vvv^{n} + \beta(V(\phi^{n})-\hat{V})V^\prime(\phi^{n}),$$
and the general stabilization operator denotes $\mathcal{S}= S_0 + S_1(-\Delta)$ with each $S_0, S_1>0$. Then we begin to prove the property of the unconditional energy stable for the semi-implicit scheme (\ref{GradFlowScheme}). For proving the unconditional energy stability of the scheme (\ref{GradFlowScheme}), we need to verify the Lipschtiz boundedness of state and adjoint variables with respect to the phase field function 
\begin{Lemma}\label{boundLemma}
Let $\phi\in H^1(\Omega)\cap L^\infty(\Omega)$ be given such that $\|\nabla\vu\|_{\textbf{L}^2(\Omega)}\leq \frac{\mu}{2K_{\Omega}}$. Suppose that $\alpha(\phi)$ is Lipschitz continuous with respect to its argument: For any $\phi^{n+1},\phi^n\in H^1(\Omega)\cap L^\infty(\Omega)$, there exists positive coefficient $\gamma>0$ independent of $\phi$ satisfying that
\begin{equation}
\| \alpha(\phi^{n+1})-\alpha(\phi^n)\|_{L^2(\Omega)}\leq \gamma \| \phi^{n+1}-\phi^n\|_{L^2(\Omega)}.
\end{equation}
Suppose that $(\phi^n,\vu^n,p^n)$ and $(\phi^{n+1},\vu^{n+1},p^{n+1})$ are the solution of Navier-Stokes equations (\ref{nsweak}), respectively. Then the following estimate holds for state variables
\begin{equation}\label{subUest}
\|\nabla(\vu^{n+1}-\vu^n)\|_{\textbf{L}^2(\Omega)}\leq \tilde{C}_1\| \phi^{n+1}-\phi^n\|_{L^2(\Omega)},
\end{equation}
where
$\tilde{C}_1:={\gamma(1+C_p)^2}/{K_{\Omega}}$.
Furthermore, if $(\phi^n,\vvv^n,q^n)$ and $(\phi^{n+1},\vvv^{n+1},q^{n+1})$ are the solution of adjoint equations (\ref{adjequ}), respectively. Then the following estimates hold for adjoint variables
\begin{equation}
\|\nabla(\vvv^{n+1}-\vvv^n)\|_{\textbf{L}^2(\Omega)}\leq \tilde{C}_2\| \phi^{n+1}-\phi^n\|_{L^2(\Omega)},
\end{equation}
where
\begin{equation}
\tilde{C}_2:= \max \big\{ 2K_{\Omega} \tilde{C}_1 \mathcal{C}_2 , \gamma(1+C_p)^2(\mathcal{C}_1+\mathcal{C}_2), (\alpha_0 C_p^2 \tilde{C}_1+\mu\tilde{C}_1) \big\}
\end{equation}
with $C_p$ being a constant related to the Poincar\'{e} inequality. 
\end{Lemma}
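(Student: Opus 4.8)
The plan is to prove the two Lipschitz estimates by subtracting the governing equations at consecutive time levels and testing with the difference of the solutions, exploiting the continuity and algebraic properties of the trilinear form from Lemma~\ref{trlinearLemma} together with the uniform bounds \eqref{upbound} and \eqref{vqbound}. I will treat the state variables first, then reuse that estimate inside the adjoint argument.

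\textbf{State estimate.} First I would write \eqref{nsweak} at levels $n+1$ and $n$, subtract, and denote $\vu^{n+1}-\vu^n$ and $\phi^{n+1}-\phi^n$ as the error quantities. Choosing $\vw=\vu^{n+1}-\vu^n$ as the test function kills the pressure terms (by the discrete divergence-free constraint) and, using $b(\vu,\vw,\vw)=0$ for divergence-free $\vu$, removes the problematic quadratic convection contribution. The remaining convection difference is controlled by the estimate in Lemma~\ref{trlinearLemma} combined with the bound $\|\nabla\vu\|_{\textbf{L}^2(\Omega)}\le \frac{\mu}{2K_\Omega}$, which is exactly what makes half of the viscous coercivity available to absorb it. The permeability difference $(\alpha(\phi^{n+1})-\alpha(\phi^n))\vu^{n+1}$ is handled by the assumed Lipschitz continuity of $\alpha$, Cauchy--Schwarz, and the Poincar\'e inequality (contributing the $C_p$ factors), while $\alpha\ge 0$ lets me drop the reaction term on the left. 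Collecting the coercive $\|\nabla(\vu^{n+1}-\vu^n)\|_{\textbf{L}^2(\Omega)}^2$ on the left and dividing yields \eqref{subUest} with the stated $\tilde C_1$.

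\textbf{Adjoint estimate.} For the adjoint bound I would subtract \eqref{adjequ} at the two levels and test with $\vvv^{n+1}-\vvv^n$. This difference is more delicate because the adjoint equation depends on $(\vu,\vvv,q)$ through the two linearized convection terms $b(\vw,\vu,\vvv)$ and $b(\vu,\vw,\vvv)$, the reaction coefficient $\alpha(\phi)$, and the right-hand side $j_{\vu}=\mu(\mathrm{D}\vu,\mathrm{D}\cdot)+(\alpha(\phi)\vu,\cdot)$; so the error equation inherits cross terms in which $\vu^{n+1}-\vu^n$ and $\alpha(\phi^{n+1})-\alpha(\phi^n)$ both appear. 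The main obstacle will be bookkeeping these several inhomogeneous terms: each must be estimated by Lemma~\ref{trlinearLemma} (using the uniform bounds $\mathcal{C}_1,\mathcal{C}_2$ for the frozen factors), by the Lipschitz continuity of $\alpha$, and crucially by the already-established state estimate \eqref{subUest} to convert $\|\nabla(\vu^{n+1}-\vu^n)\|_{\textbf{L}^2(\Omega)}$ into $\|\phi^{n+1}-\phi^n\|_{L^2(\Omega)}$. The three contributions in the $\max$ defining $\tilde C_2$ should correspond respectively to the convection cross term (whence the factor $2K_\Omega\tilde C_1\mathcal{C}_2$), the $\alpha$-difference terms acting on both $\vu$ and $\vvv$ (whence $\gamma(1+C_p)^2(\mathcal{C}_1+\mathcal{C}_2)$), and the $j_{\vu}$-difference on the right (whence $(\alpha_0 C_p^2\tilde C_1+\mu\tilde C_1)$). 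As before, the assumption $\|\nabla\vu\|_{\textbf{L}^2(\Omega)}\le\frac{\mu}{2K_\Omega}$ guarantees that the quadratic-in-$\vvv$ convection terms can be absorbed into the viscous coercivity, leaving a clean coercive estimate; dividing by the coercivity constant then produces the claimed bound with $\tilde C_2$ as the maximum of the three groups.

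I would close by noting that the pressure-difference estimates are not needed for these $H^1$-seminorm bounds (the inf--sup / de Rham argument via \cite[Lemma II.2.1.1]{Sohr} would recover them if desired), and that both inequalities crucially rely on the frozen factors being uniformly bounded independently of $\phi$, which is precisely the content of the preceding Lemma. The only genuinely nontrivial point is verifying that every nonlinear cross term can indeed be absorbed under the stated smallness condition on $\|\nabla\vu\|_{\textbf{L}^2(\Omega)}$; the rest is a careful but routine application of Cauchy--Schwarz, Young, and Poincar\'e inequalities.
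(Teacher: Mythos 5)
Your proposal follows essentially the same route as the paper's own proof: subtract the equations at consecutive levels, test with the solution difference, use $b(\vu^n,\vw,\vw)=0$ and the smallness assumption $\|\nabla\vu\|_{\textbf{L}^2(\Omega)}\leq \frac{\mu}{2K_\Omega}$ to eliminate or absorb the convection terms into the viscous coercivity, control the permeability differences via the Lipschitz assumption together with the $\textbf{H}^1\hookrightarrow\textbf{L}^4$ embedding and Poincar\'e inequality, and then feed the state estimate \eqref{subUest} and the uniform bounds $\mathcal{C}_1,\mathcal{C}_2$ into the adjoint error equation. Your attribution of the three groups of terms in $\tilde C_2$ (convection cross term, $\alpha$-difference terms, and $j_{\vu}$-difference) matches the paper's computation exactly, so the plan is correct as stated.
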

\begin{proof}
Let $(\phi^n,\vu^n,p^n)$ and $(\phi^{n+1},\vu^{n+1},p^{n+1})$ be the solution pairs of Navier-Stokes equations (\ref{nsweak}), respectively. Then the substraction together with setting the test function $\vvv=\vu^{n+1}-\vu^n$ yields that
\begin{equation}
\begin{aligned}
&a(\vu^{n+1}-\vu^n,\vu^{n+1}-\vu^n)+b(\vu^{n+1}-\vu^n,\vu^{n+1},\vu^{n+1}-\vu^n)\\
&+b(\vu^n,\vu^{n+1}-\vu^n,\vu^{n+1}-\vu^n)+([\alpha(\phi^{n+1})-\alpha(\phi^n)]\vu^{n+1},\vu^{n+1}-\vu^n)\\ & +(\alpha(\phi^n)(\vu^{n+1}-\vu^n),\vu^{n+1}-\vu^n)=0.
\end{aligned}
\end{equation}
For $\vu^n\in \textbf{H}^1({\rm div0},\Omega)$, we have $b(\vu^n,\vu^{n+1}-\vu^n,\vu^{n+1}-\vu^n)=0$. After applying Lemma \ref{trlinearLemma}, Cauchy inequality, Hölder equality, and Poincar\'{e} inequality, we obtain
\begin{equation}
\begin{aligned}
&\mu \| \nabla(\vu^{n+1}-\vu^n)\|^2_{\textbf{L}^2(\Omega)}+(\alpha(\phi^n)(\vu^{n+1}-\vu^n),\vu^{n+1}-\vu^n)\\
\leq& K_{\Omega} \|\nabla(\vu^{n+1}-\vu^n)\|_{\textbf{L}^2(\Omega)}^2 \| \nabla\vu^{n+1}\|_{\textbf{L}^2(\Omega)}\\
&+\| \alpha(\phi^{n+1})-\alpha(\phi^n)\|_{L^2(\Omega)}\| \vu^{n+1}\|_{\textbf{L}^4(\Omega)}\| \vu^{n+1}-\vu^n\|_{\textbf{L}^4(\Omega)}\\
\leq& \frac{\mu}{2}\|\nabla(\vu^{n+1}-\vu^n)\|_{\textbf{L}^2(\Omega)}^2\\
&+\gamma(1+C_p)^2\|\phi^{n+1}-\phi^n\|_{L^2(\Omega)}\| \nabla\vu^{n+1}\|_{\textbf{L}^2(\Omega)}\| \nabla(\vu^{n+1}-\vu^n)\|_{\textbf{L}^2(\Omega)},
\end{aligned}
\end{equation}
where the Sobolev imbedding Theorem is used for $\textbf{H}^1(\Omega)\hookrightarrow \textbf{L}^4(\Omega)$ ignoring its constant and $C_p$ depends on the Poincar\'{e} inequality. After that, we can deduce the following estimate
\begin{equation}
\frac{\mu}{2}\|\nabla(\vu^{n+1}-\vu^n)\|_{\textbf{L}^2(\Omega)}\leq \gamma(1+C_p)^2\frac{\mu}{2K_{\Omega}}\| \phi^{n+1}-\phi^n\|_{L^2(\Omega)}.
\end{equation}
Let $(\phi^n,\vvv^n,q^n)$ and $(\phi^{n+1},\vvv^{n+1},q^{n+1})$ are the solution pairs of adjoint equations (\ref{adjequ}). The substraction by setting the test function $\vw=\vvv^{n+1}-\vvv^n$ yields that
\begin{equation}
\begin{aligned}
&a(\vvv^{n+1}-\vvv^n,\vvv^{n+1}-\vvv^n)+b(\vvv^{n+1}-\vvv^n, \vu^{n+1},\vvv^{n+1})-b(\vvv^{n+1}-\vvv^n, \vu^{n},\vvv^{n})\\
&+b(\vu^{n+1},\vvv^{n+1}-\vvv^n,\vvv^{n+1})-b(\vu^{n},\vvv^{n+1}-\vvv^n,\vvv^{n})\\
&+(\alpha(\phi^{n+1})\vvv^{n+1},\vvv^{n+1}-\vvv^n)-(\alpha(\phi^{n})\vvv^{n},\vvv^{n+1}-\vvv^n)\\
=&a(\vu^{n+1}-\vu^n, \vvv^{n+1}-\vvv^n)+(\alpha(\phi^{n+1})\vu^{n+1}-\alpha(\phi^{n})\vu^{n}, \vvv^{n+1}-\vvv^n).
\end{aligned}
\end{equation}
By the rearrangement of the trilinear terms, we obtain
\begin{equation}
\begin{aligned}
&a(\vvv^{n+1}-\vvv^n,\vvv^{n+1}-\vvv^n)+b(\vvv^{n+1}-\vvv^n, \vu^{n+1}-\vu^n,\vvv^{n+1})+b(\vvv^{n+1}-\vvv^n, \vu^{n},\vvv^{n+1}-\vvv^{n})\\
&+b(\vu^{n+1}-\vu^n,\vvv^{n+1}-\vvv^n,\vvv^{n+1})+b(\vu^{n},\vvv^{n+1}-\vvv^n,\vvv^{n+1}-\vvv^n)\\
&+([\alpha(\phi^{n+1})-\alpha(\phi^{n})]\vvv^{n+1},\vvv^{n+1}-\vvv^n)+(\alpha(\phi^{n})(\vvv^{n+1}-\vvv^{n}),\vvv^{n+1}-\vvv^n)\\
=&a(\vu^{n+1}-\vu^n, \vvv^{n+1}-\vvv^n)+([\alpha(\phi^{n+1})-\alpha(\phi^{n})]\vu^{n+1}, \vvv^{n+1}-\vvv^n)\\
&+(\alpha(\phi^{n})(\vu^{n+1}-\vu^{n}), \vvv^{n+1}-\vvv^n).
\end{aligned}
\end{equation}
For $\vu^{n}\in \textbf{H}^1({\rm div0},\Omega)$, we have $b(\vu^{n},\vvv^{n+1}-\vvv^n,\vvv^{n+1}-\vvv^n)=0$. Similarly applying Lemma \ref{trlinearLemma}, Cauchy inequality, Hölder equality and Poincar\'{e} inequality, we obtain
\begin{equation}
\begin{aligned}
&\mu\| \nabla(\vvv^{n+1}-\vvv^{n})\|_{\textbf{L}^2(\Omega)}^2+(\alpha(\phi^n)(\vvv^{n+1}-\vvv^n),\vvv^{n+1}-\vvv^n)\\
\leq &2K_{\Omega}\|\nabla(\vvv^{n+1}-\vvv^{n})\|_{\textbf{L}^2(\Omega)}\|\nabla(\vu^{n+1}-\vu^{n})\|_{\textbf{L}^2(\Omega)}\|\nabla \vvv^{n+1}\|_{\textbf{L}^2(\Omega)} \\
&+K_{\Omega}\|\nabla(\vvv^{n+1}-\vvv^{n})\|^2_{\textbf{L}^2(\Omega)}\|\nabla\vu^{n}\|_{\textbf{L}^2(\Omega)}\\
&+\| \alpha(\phi^{n+1})-\alpha(\phi^n)\|_{L^2(\Omega)}\| \vvv^{n+1}-\vvv^n \|_{\textbf{L}^4(\Omega)}(\|\vvv^{n+1}\|_{\textbf{L}^4(\Omega)}+\|\vu^{n+1}\|_{\textbf{L}^4(\Omega)})\\
&+\|\alpha(\phi^n)\|_{L^\infty(\Omega)}\|\vvv^{n+1}-\vvv^n\|_{\textbf{L}^2(\Omega)}\|\vu^{n+1}-\vu^n\|_{\textbf{L}^2(\Omega)} \\
&+\mu \| \nabla(\vvv^{n+1}-\vvv^{n})\|_{\textbf{L}^2(\Omega)}\| \nabla(\vu^{n+1}-\vu^{n})\|_{\textbf{L}^2(\Omega)},
\end{aligned}
\end{equation}
yielding
\begin{equation}
\begin{aligned}
&\mu\| \nabla(\vvv^{n+1}-\vvv^{n})\|_{\textbf{L}^2(\Omega)}^2\\
\leq &2K_{\Omega} \tilde{C}_1 \mathcal{C}_2\|\nabla(\vvv^{n+1}-\vvv^{n})\|_{\textbf{L}^2(\Omega)}\|\phi^{n+1}-\phi^n\|_{{L}^2(\Omega)} +\frac{\mu}{2}\|\nabla(\vvv^{n+1}-\vvv^{n})\|^2_{\textbf{L}^2(\Omega)}\\
&+\gamma(1+C_p)^2\| \phi^{n+1}-\phi^n\|_{L^2(\Omega)}\| \nabla(\vvv^{n+1}-\vvv^n) \|_{\textbf{L}^2(\Omega)}(\|\nabla\vvv^{n+1}\|_{\textbf{L}^2(\Omega)}+\|\nabla\vu^{n+1}\|_{\textbf{L}^2(\Omega)})\\
&+\alpha_0 C_p^2\|\nabla(\vvv^{n+1}-\vvv^n)\|_{\textbf{L}^2(\Omega)}\| \nabla(\vu^{n+1}-\vu^n)\|_{\textbf{L}^2(\Omega)}\\
&+\mu \| \nabla(\vvv^{n+1}-\vvv^{n})\|_{\textbf{L}^2(\Omega)}\| \nabla(\vu^{n+1}-\vu^{n})\|_{\textbf{L}^2(\Omega)},
\end{aligned}
\end{equation}
where the Sobolev imbedding Theorem is used for $\textbf{H}^1(\Omega)\hookrightarrow \textbf{L}^4(\Omega)$. The boundedness result can be further deduced by
\begin{equation}
\begin{aligned}
\frac{\mu}{2}\| \nabla(\vvv^{n+1}-\vvv^{n})\|_{\textbf{L}^2(\Omega)}
\leq & 2K_{\Omega} \tilde{C}_1 \mathcal{C}_2\|\phi^{n+1}-\phi^n\|_{{L}^2(\Omega)} + \gamma(1+C_p)^2(\mathcal{C}_1+\mathcal{C}_2) \|\phi^{n+1}-\phi^n\|_{{L}^2(\Omega)}\\
&+(\alpha_0 C_p^2 \tilde{C}_1+\mu\tilde{C}_1) \|\phi^{n+1}-\phi^n\|_{{L}^2(\Omega)}
\end{aligned}
\end{equation}
where the estimate (\ref{subUest}) is used.
\end{proof}

Next, the estimate of the adjacent cost functionals is discussed.
\begin{Lemma}\label{StaJ}
Suppose that the phase field function $\phi^{n+1}$ is evolved by the scheme (\ref{GradFlowScheme}).

\noindent
Let $(\phi^{n+1}, \vu^{n+1},p^{n+1})$ and $(\phi^n, \vu^{n},p^{n})$ be the solution pairs of Navier-Stokes equations (\ref{nsweak}). Furthermore, let $(\phi^{n+1},\vvv^{n+1},q^{n+1})$ and $(\phi^n,\vvv^{n},q^{n})$ be the solution pairs of adjoint equations (\ref{adjequ}). Then the subtraction of adjacent cost functional gives
\begin{equation}\label{subJinLEMMA}
\begin{aligned}
&J(\phi^{n+1},\vu^{n+1}) - J(\phi^{n},\vu^{n}) 
=\bigg(\frac{1}{2}\alpha^\prime(\phi^n)(\vu^n)^2-\alpha^\prime(\phi^n)\vu^n\cdot\vvv^{n},\phi^{n+1}-\phi^n \bigg)\\
&-\frac{\mu}{2}\int_{\Omega} |{\rm D}\vu^{n+1} - {\rm D}\vu^n|^2- \int_{\Omega} \frac{\alpha(\phi^{n+1})}{2}\big\vert \vu^{n+1} - \vu^n\big\vert^2
+\mathcal{R}_1^{n+1} + \mathcal{R}_2^{n+1},
\end{aligned}
\end{equation}
where $\mathcal{R}_1^{n+1}, \mathcal{R}_2^{n+1}$ are the two quadratic residual terms defining
\begin{equation}
\begin{aligned}
\mathcal{R}_1^{n+1}:=& b(\vu^{n+1}-\vu^n,\vu^{n+1}-\vu^n,\vvv^{n+1}),\\
\mathcal{R}_2^{n+1}:=&-\big(\alpha^\prime(\phi^n)\vu^n\cdot(\vvv^{n+1}-\vvv^{n}), \phi^{n+1}-\phi^n\big).
\end{aligned}
\end{equation}
\end{Lemma}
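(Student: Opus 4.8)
The plan is to read \eqref{subJinLEMMA} as a discrete ``descent identity'': expand the cost decrement $J(\phi^{n+1},\vu^{n+1})-J(\phi^{n},\vu^{n})$ purely algebraically, peel off the two sign-definite quadratic terms, and then convert every remaining increment-linear contribution into the adjoint-based sensitivity term by testing the state and adjoint weak forms against the velocity increment. Throughout I write $\delta\vu:=\vu^{n+1}-\vu^n$, $\delta\vvv:=\vvv^{n+1}-\vvv^n$, $\delta\phi:=\phi^{n+1}-\phi^n$, and use that $\vu^n,\vu^{n+1},\vvv^{n+1}$ are all divergence free while $\delta\vu$ vanishes on $\Gamma_d$.

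First I would apply the weighted polarization identity $\tfrac12 w|a|^2-\tfrac12 w|b|^2 = w\,(a,a-b)-\tfrac12 w|a-b|^2$ to the integrand $j=\tfrac12\mu|{\rm D}\vu|^2+\tfrac12\alpha(\phi)|\vu|^2$, taking weight $\mu$ for the viscous part and weight $\alpha(\phi^{n+1})$ for the Darcy part (inserting and subtracting $\tfrac12\alpha(\phi^{n+1})|\vu^n|^2$). After integration this produces exactly the two quadratic terms $-\tfrac\mu2\int_\Omega|{\rm D}\delta\vu|^2$ and $-\tfrac12\int_\Omega\alpha(\phi^{n+1})|\delta\vu|^2$, together with a linear remainder $(A):=a(\vu^{n+1},\delta\vu)+(\alpha(\phi^{n+1})\vu^{n+1},\delta\vu)$ and a permeability-increment term $(B):=\tfrac12\big([\alpha(\phi^{n+1})-\alpha(\phi^n)]\vu^n,\vu^n\big)$.

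The crux of the conversion is the observation that $(A)$ is precisely $(j_{\vu}^{n+1},\delta\vu)$, the right-hand side of the adjoint equation \eqref{adjequ} at level $n+1$ tested with $\vw=\delta\vu$, by the formula for $j_{\vu}$ in Lemma \ref{adjLemma}. Replacing $(A)$ by the corresponding adjoint left-hand side, and then subtracting the two Navier--Stokes weak forms \eqref{nsweak} at levels $n$ and $n+1$ tested against $\vvv^{n+1}$, I can eliminate $a(\delta\vu,\vvv^{n+1})$ and the mixed trilinear contributions. Here the divergence-free constraints kill the pressure pairings $(p^{n+1}-p^n,\nabla\cdot\vvv^{n+1})$ and $(q^{n+1},\nabla\cdot\delta\vu)$; the bilinear decomposition of $b(\vu^{n+1},\vu^{n+1},\cdot)-b(\vu^n,\vu^n,\cdot)$ leaves the quadratic trilinear remainder $b(\delta\vu,\delta\vu,\vvv^{n+1})=\mathcal{R}_1^{n+1}$; and the permeability pairings collapse to $-\big([\alpha(\phi^{n+1})-\alpha(\phi^n)]\vu^n,\vvv^{n+1}\big)$. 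Thus $(A)=\mathcal{R}_1^{n+1}-\big([\alpha(\phi^{n+1})-\alpha(\phi^n)]\vu^n,\vvv^{n+1}\big)$.

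Finally I would collect the two surviving permeability contributions, $(B)$ and this last term, into $\big([\alpha(\phi^{n+1})-\alpha(\phi^n)]\vu^n,\tfrac12\vu^n-\vvv^{n+1}\big)$, split $\vvv^{n+1}=\vvv^n+\delta\vvv$, and linearize the increment as $\alpha(\phi^{n+1})-\alpha(\phi^n)=\alpha'(\phi^n)\delta\phi$. The $\vvv^n$ part then yields the sensitivity term $\big(\tfrac12\alpha'(\phi^n)|\vu^n|^2-\alpha'(\phi^n)\vu^n\cdot\vvv^n,\delta\phi\big)$, which is exactly $(j'(\phi^n),\delta\phi)$ from Lemma \ref{SenStructure} and provides a clean consistency check, while the $\delta\vvv$ part gives $\mathcal{R}_2^{n+1}=-(\alpha'(\phi^n)\vu^n\cdot\delta\vvv,\delta\phi)$, completing \eqref{subJinLEMMA}. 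The step I expect to be the main obstacle is precisely this identification of the finite permeability increment with its linearization: since $J(\phi^{n+1})-J(\phi^n)$ only ever generates the difference $\alpha(\phi^{n+1})-\alpha(\phi^n)$, replacing it by $\alpha'(\phi^n)\delta\phi$ demands controlling the Taylor remainder through the $C^{1,1}$ regularity of $\alpha$ and the uniform bounds on $\vu^n,\vvv^{n+1}$; this is the one point of the derivation that must be argued carefully rather than dispatched by a naive chain rule.
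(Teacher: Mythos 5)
Your proposal is correct and follows essentially the same route as the paper's proof: the same polarization identity isolates the two sign-definite quadratic terms, the adjoint equation at level $n+1$ is tested with $\vu^{n+1}-\vu^n$, the subtracted Navier--Stokes weak forms are tested with $\vvv^{n+1}$, and the trilinear and permeability differences are decomposed exactly as you describe to produce $\mathcal{R}_1^{n+1}$ and $\mathcal{R}_2^{n+1}$. The only point of divergence is that the paper writes $\alpha(\phi^{n+1})-\alpha(\phi^n)=\alpha'(\phi^n)(\phi^{n+1}-\phi^n)$ as an exact identity (consistent with the affine cut-off permeability $\alpha(\phi)=\alpha_0(1-\mathcal{P}(\phi))$ adopted later), so the Taylor-remainder issue you flag as the main obstacle never arises in the paper's argument.
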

\begin{proof}
By the identity, it holds for all $a,b\in \mathbb{R}$
\begin{equation}\label{iden}
\frac{1}{2}(a^2-b^2) = (a-b)a-\frac{1}{2}\vert a-b\vert^2,
\end{equation}
yielding that
\begin{equation}\label{EnerStaDisS1}
\begin{aligned}
&J(\phi^{n+1},\vu^{n+1}) - J(\phi^{n},\vu^{n})\\
=&\frac{\mu}{2}\int_{\Omega} | {\rm D}\vu^{n+1}|^2 - |{\rm D}\vu^{n}|^2+ \int_{\Omega} \frac{\alpha(\phi^{n+1})}{2}|\vu^{n+1}|^2 - \frac{\alpha(\phi^{n})}{2}|\vu^{n}|^2 \\ 
=& a(\vu^{n+1},\vu^{n+1} - \vu^n) -\frac{\mu}{2}\int_{\Omega} |{\rm D}\vu^{n+1} - {\rm D}\vu^n|^2 \\
&+\int_{\Omega} \frac{\alpha(\phi^{n+1})}{2}(\big|\vu^{n+1}|^2-|\vu^n|^2 ) + \int_{\Omega} \frac{\alpha(\phi^{n+1})-\alpha(\phi^n)}{2}|\vu^n|^2.
\end{aligned}
\end{equation}
For last two terms in the last equation of (\ref{EnerStaDisS1}), we have
\begin{equation}\label{EnerStaDisS2}
\begin{aligned}
&\int_{\Omega} \frac{\alpha(\phi^{n+1})}{2}\big(|\vu^{n+1}|^2-|\vu^n|^2 \big) + \int_{\Omega} \frac{\alpha(\phi^{n+1})-\alpha(\phi^n)}{2}|\vu^n|^2\\
=&\big(\alpha(\phi^{n+1})\vu^{n+1}, \vu^{n+1}-\vu^n\big)- \int_{\Omega} \frac{\alpha(\phi^{n+1})}{2}\big\vert \vu^{n+1} - \vu^n\big\vert^2+\bigg(\frac{1}{2}\alpha^\prime(\phi^n)|\vu^n|^2,\phi^{n+1}-\phi^n \bigg).
\end{aligned}
\end{equation}
Given $(\phi^{n+1},\vu^{n+1},p^{n+1})$, take the test functions $(\vu^{n+1}-\vu^n, p^{n+1}-p^n)$ in (\ref{adjequ}) yielding that
\begin{equation}\label{EnerStaDisS3}\left\{
\begin{aligned}
&a(\vvv^{n+1},\vu^{n+1}-\vu^n)+b(\vu^{n+1}-\vu^n,\vu^{n+1},\vvv^{n+1}) + b(\vu^{n+1},\vu^{n+1}-\vu^n,\vvv^{n+1})\\
&+ (\alpha(\phi^{n+1}) \vvv^{n+1}, \vu^{n+1}-\vu^n)
-(q^{n+1}, \nabla \cdot (\vu^{n+1}-\vu^n)) \\
=&a(\vu^{n+1},\vu^{n+1}-\vu^n)+(\alpha(\phi^{n+1})\vu^{n+1}, \vu^{n+1}-\vu^n),\\
& (p^{n+1}-p^{n}, \nabla\cdot \vvv^{n+1}) =0.
\end{aligned}\right.
\end{equation}
Considering the Navier-Stokes equations (\ref{nsweak}) with two consecutive time steps, then the substraction leads to
\begin{equation}\label{EnerStaDisS4}
\left\{
\begin{aligned}
&a(\vu^{n+1}-\vu^n,\vvv^{n+1})+b(\vu^{n+1},\vu^{n+1},\vvv^{n+1})-b(\vu^{n},\vu^{n},\vvv^{n+1}) \\
&+ (\alpha(\phi^{n+1})\vu^{n+1},\vvv^{n+1})- (\alpha(\phi^{n})\vu^{n},\vvv^{n+1})-(p^{n+1}-p^{n}, \nabla \cdot\vvv^{n+1})= 0, \\
&(\nabla \cdot(\vu^{n+1}-\vu^n), q^{n+1})  = 0,
\end{aligned}\right.
\end{equation}
where we have taken the test functions by $(\vvv^{n+1}, q^{n+1})$. For the nonlinear terms in (\ref{EnerStaDisS4}), we have
\begin{equation}\label{EnerStaDisS5}
\begin{aligned}
&b(\vu^{n+1},\vu^{n+1},\vvv^{n+1})-b(\vu^{n},\vu^{n},\vvv^{n+1}) \\
=&b(\vu^{n+1},\vu^{n+1},\vvv^{n+1})-b(\vu^{n+1},\vu^{n},\vvv^{n+1})+b(\vu^{n+1},\vu^{n},\vvv^{n+1})-b(\vu^{n},\vu^{n},\vvv^{n+1}) \\
=&b(\vu^{n+1},\vu^{n+1}-\vu^n,\vvv^{n+1})+b(\vu^{n+1}-\vu^n,\vu^{n+1},\vvv^{n+1})-\mathcal{R}_1^{n+1},
\end{aligned}
\end{equation}
where the residual $\mathcal{R}_1^{n+1} = b(\vu^{n+1}-\vu^n,\vu^{n+1}-\vu^n,\vvv^{n+1})$ is second-order term of the substraction $\vu^{n+1}-\vu^{n}$. Similarly, we have
\begin{equation}\label{EnerStaDisS6}
\begin{aligned}
&(\alpha(\phi^{n+1})\vu^{n+1},\vvv^{n+1})- (\alpha(\phi^{n})\vu^{n},\vvv^{n+1})\\
=&(\alpha(\phi^{n+1})\vu^{n+1},\vvv^{n+1})- (\alpha(\phi^{n+1})\vu^{n},\vvv^{n+1})+(\alpha(\phi^{n+1})\vu^{n},\vvv^{n+1})-(\alpha(\phi^{n})\vu^{n},\vvv^{n+1})\\
=&\big(\alpha(\phi^{n+1})(\vu^{n+1}-\vu^n),\vvv^{n+1}\big)+\big(\alpha^\prime(\phi^n)\vu^n\vvv^{n+1}, \phi^{n+1}-\phi^n\big)\\
=&\big(\alpha(\phi^{n+1})(\vu^{n+1}-\vu^n),\vvv^{n+1}\big)+\big(\alpha^\prime(\phi^n)\vu^n\vvv^{n}, \phi^{n+1}-\phi^n\big)-\mathcal{R}_2,
\end{aligned}
\end{equation}
where the residual is $\mathcal{R}_2^{n+1}=-\big(\alpha^\prime(\phi^n)\vu^n(\vvv^{n+1}-\vvv^{n}), \phi^{n+1}-\phi^n\big)$. Thus from (\ref{EnerStaDisS3}) to (\ref{EnerStaDisS6}), we obtain
\begin{equation}\label{EnerStaDisS7}
\begin{aligned}
&a(\vu^{n+1},\vu^{n+1}-\vu^n)+(\alpha(\phi^{n+1})\vu^{n+1}, \vu^{n+1}-\vu^n)\\
=& -\big(\alpha^\prime(\phi^n)\vu^n\cdot \vvv^{n}, \phi^{n+1}-\phi^n\big)+\mathcal{R}_1^{n+1} + \mathcal{R}_2^{n+1}.
\end{aligned}
\end{equation}
Combining (\ref{EnerStaDisS1}), (\ref{EnerStaDisS2}) and (\ref{EnerStaDisS7}), we conclude the result.
\end{proof}

We are prepared to deduce the property of unconditional energy stable with semi-implicit gradient flow scheme. 
\begin{Theorem}\label{EnerDecThm}
Let $S_0, S_1$ are positive values independent of $\phi$ such that
\begin{equation}\left\{
\begin{aligned}
&\frac{1}{2\epsilon_2}\|\omega^{(2)}(\phi)\|_{L^\infty(\Omega)} +K_{\Omega}\tilde{C}_1^2\mathcal{C}_2+\frac{1}{2}\alpha_0(1+C_p)^2\mathcal{C}_1\tilde{C}_2+\frac{\beta\vert \Omega\vert^2}{2}\leq S_0,\\
&\frac{1}{2}\alpha_0(1+C_p)^2\mathcal{C}_1\tilde{C}_2\leq S_1.
\end{aligned}\right.
\end{equation}
The unconditional energy stable holds for the stabilized gradient flow scheme (\ref{GradFlowScheme})
\begin{equation}
    \mathcal{W}(\phi^{n+1},\vu^{n+1}) - \mathcal{W}(\phi^{n},\vu^{n}) \leq  \tau (\nu^{n+1},\mathcal{G}\nu^{n+1}).
\end{equation}
\end{Theorem}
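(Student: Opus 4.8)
The plan is to expand the energy difference $\mathcal{W}(\phi^{n+1},\vu^{n+1})-\mathcal{W}(\phi^{n},\vu^{n})$ piece by piece, extract from each of the four parts a linear contribution tested against the increment $\psi:=\phi^{n+1}-\phi^{n}$, and then absorb every quadratic remainder into the stabilization. For the Ginzburg--Landau term I would use the polarization identity $\tfrac12(\|a\|^2-\|b\|^2)=(a,a-b)-\tfrac12\|a-b\|^2$ together with integration by parts (boundary terms vanishing by assumption), giving $\tfrac{\epsilon_1}{2}(\|\nabla\phi^{n+1}\|_{L^2(\Omega)}^2-\|\nabla\phi^n\|_{L^2(\Omega)}^2)=-\epsilon_1(\Delta\phi^{n+1},\psi)-\tfrac{\epsilon_1}{2}\|\nabla\psi\|_{L^2(\Omega)}^2$. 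The double-well term I would expand by Taylor's theorem, so that $\tfrac1{\epsilon_2}(\omega(\phi^{n+1})-\omega(\phi^n),1)\le\tfrac1{\epsilon_2}(\omega'(\phi^n),\psi)+\tfrac1{2\epsilon_2}\|\omega^{(2)}\|_{L^\infty(\Omega)}\|\psi\|_{L^2(\Omega)}^2$. For the volume penalty, the same polarization identity together with the affinity of $V$ yields the linear part $\beta(V(\phi^n)-\hat V)(V'(\phi^n),\psi)$ plus a quadratic remainder controlled by Cauchy--Schwarz through the $\beta$-dependent constant of the hypothesis. Finally, for $J$ I would invoke Lemma~\ref{StaJ} verbatim, which supplies the linear part $(j_\phi(\phi^n,\vu^n)-\alpha'(\phi^n)\vu^n\cdot\vvv^n,\psi)$, the two nonpositive terms $-\tfrac{\mu}{2}\|{\rm D}\vu^{n+1}-{\rm D}\vu^n\|_{\textbf{L}^2(\Omega)}^2$ and $-\tfrac12(\alpha(\phi^{n+1}),|\vu^{n+1}-\vu^n|^2)$, and the two residuals $\mathcal{R}_1^{n+1},\mathcal{R}_2^{n+1}$.

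The decisive algebraic step is that the four linear contributions sum to exactly $(-\epsilon_1\Delta\phi^{n+1}+\mathcal{U}(\phi^n,\vu^n,\vvv^n),\psi)$, because $j_\phi=\tfrac12\alpha'(\phi)|\vu|^2$ identifies the first summand of Lemma~\ref{StaJ} with the corresponding part of $\mathcal{U}$. By the second line of the scheme~\eqref{GradFlowScheme} this equals $(\nu^{n+1}-\mathcal{S}\psi,\psi)$, and integrating the stabilization $\mathcal{S}=S_0+S_1(-\Delta)$ by parts gives $(\nu^{n+1},\psi)-S_0\|\psi\|_{L^2(\Omega)}^2-S_1\|\nabla\psi\|_{L^2(\Omega)}^2$. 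The first line of~\eqref{GradFlowScheme}, $\psi=\tau\mathcal{G}\nu^{n+1}$, then converts the leading term into the target $\tau(\nu^{n+1},\mathcal{G}\nu^{n+1})$. Collecting everything, I arrive at $\mathcal{W}(\phi^{n+1},\vu^{n+1})-\mathcal{W}(\phi^n,\vu^n)\le\tau(\nu^{n+1},\mathcal{G}\nu^{n+1})+\mathcal{E}$, where the remainder $\mathcal{E}$ gathers the genuinely nonpositive terms ($-\tfrac{\epsilon_1}{2}\|\nabla\psi\|^2$, the viscous term, and the damping term) and the positive contributions $\tfrac1{2\epsilon_2}\|\omega^{(2)}\|_{L^\infty(\Omega)}\|\psi\|^2$, the volume remainder, $\mathcal{R}_1^{n+1}$ and $\mathcal{R}_2^{n+1}$, all set against the reserve $-S_0\|\psi\|_{L^2(\Omega)}^2-S_1\|\nabla\psi\|_{L^2(\Omega)}^2$.

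The crux is to show $\mathcal{E}\le0$, which reduces to estimating the two residuals. For $\mathcal{R}_1^{n+1}=b(\vu^{n+1}-\vu^n,\vu^{n+1}-\vu^n,\vvv^{n+1})$ I would apply Lemma~\ref{trlinearLemma}, the uniform bound $\|\nabla\vvv^{n+1}\|_{\textbf{L}^2(\Omega)}\le\mathcal{C}_2$ from~\eqref{vqbound}, and the Lipschitz estimate $\|\nabla(\vu^{n+1}-\vu^n)\|_{\textbf{L}^2(\Omega)}\le\tilde{C}_1\|\psi\|_{L^2(\Omega)}$ of Lemma~\ref{boundLemma}, obtaining $|\mathcal{R}_1^{n+1}|\le K_\Omega\tilde{C}_1^2\mathcal{C}_2\|\psi\|_{L^2(\Omega)}^2$, precisely the term reserved in the condition on $S_0$. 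For $\mathcal{R}_2^{n+1}=-(\alpha'(\phi^n)\vu^n\cdot(\vvv^{n+1}-\vvv^n),\psi)$ I would combine Hölder's inequality, the embedding $\textbf{H}^1(\Omega)\hookrightarrow\textbf{L}^4(\Omega)$, Poincaré's inequality, the bounds $\|\vu^n\|_{\textbf{H}^1(\Omega)}\le\mathcal{C}_1$ and $\|\nabla(\vvv^{n+1}-\vvv^n)\|_{\textbf{L}^2(\Omega)}\le\tilde{C}_2\|\psi\|_{L^2(\Omega)}$, and $\|\alpha'\|_{L^\infty(\Omega)}\le\alpha_0$, to bound $|\mathcal{R}_2^{n+1}|$ by $\tfrac12\alpha_0(1+C_p)^2\mathcal{C}_1\tilde{C}_2\|\psi\|_{H^1(\Omega)}^2$; since $\|\psi\|_{H^1(\Omega)}^2=\|\psi\|_{L^2(\Omega)}^2+\|\nabla\psi\|_{L^2(\Omega)}^2$, this single bound accounts simultaneously for the $S_0$ and $S_1$ entries carrying the factor $\tfrac12\alpha_0(1+C_p)^2\mathcal{C}_1\tilde{C}_2$. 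With the prescribed lower bounds on $S_0,S_1$, every positive quadratic term is dominated by $-S_0\|\psi\|^2-S_1\|\nabla\psi\|^2$ while the remaining negative terms only help, so $\mathcal{E}\le0$ and the claim follows.

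I expect the main obstacle to be the control of $\mathcal{R}_2^{n+1}$: it is the only place where the adjoint Lipschitz constant $\tilde{C}_2$ enters and where both the $L^2$ and the $\textbf{H}^1$ strength of the increment are needed, which is exactly why the stabilization must carry both a zeroth-order part $S_0$ and a gradient part $S_1$. Everything else---the two polarization identities, the Taylor remainder for $\omega$, and the trilinear bound on $\mathcal{R}_1^{n+1}$---is routine once the uniform bounds~\eqref{upbound}--\eqref{vqbound} and the Lipschitz estimates of Lemma~\ref{boundLemma} are in hand.
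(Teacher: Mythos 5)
Your proposal is correct and follows essentially the same route as the paper's proof: the same decomposition of the energy difference (polarization for the gradient term, Taylor for the double well, linearization plus Cauchy--Schwarz for the volume penalty, Lemma~\ref{StaJ} for $J$), the same identification of the linear parts with $(\nu^{n+1}-\mathcal{S}(\phi^{n+1}-\phi^n),\phi^{n+1}-\phi^n)$ via the scheme, and the same bounds $|\mathcal{R}_1^{n+1}|\le K_\Omega\tilde{C}_1^2\mathcal{C}_2\|\phi^{n+1}-\phi^n\|_{L^2(\Omega)}^2$ and $|\mathcal{R}_2^{n+1}|\le\tfrac12\alpha_0(1+C_p)^2\mathcal{C}_1\tilde{C}_2(\|\phi^{n+1}-\phi^n\|_{L^2(\Omega)}^2+\|\nabla(\phi^{n+1}-\phi^n)\|_{L^2(\Omega)}^2)$ absorbed by $S_0$ and $S_1$. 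Your closing observation---that $\mathcal{R}_2^{n+1}$ is the sole reason the stabilization needs both a zeroth-order and a gradient part---accurately captures the structure of the paper's argument.
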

\begin{proof}
The residue terms in (\ref{subJinLEMMA}) can be bounded by Lemma \ref{boundLemma}
\begin{equation}
\begin{aligned}
\vert \mathcal{R}_1^{n+1}\vert&=\vert b(\vu^{n+1}-\vu^n,\vu^{n+1}-\vu^n,\vvv^{n+1})\vert\\
&\leq K_{\Omega} \|\nabla(\vu^{n+1}-\vu^n)\|_{\textbf{L}^2(\Omega)}^2\| \nabla\vvv^{n+1}\|_{\textbf{L}^2(\Omega)}\\
&\leq K_{\Omega}\tilde{C}_1^2\mathcal{C}_2 \| \phi^{n+1}-\phi^n\|_{L^2(\Omega)}^2,
\end{aligned}
\end{equation}
and
\begin{equation}
\begin{aligned}
\vert \mathcal{R}_2^{n+1}\vert&=\vert \big(\alpha^\prime(\phi^n)\cdot\vu^n(\vvv^{n+1}-\vvv^{n}), \phi^{n+1}-\phi^n\big)\vert\\
&\leq \| \alpha^\prime(\phi^n)\vu^n\|_{\textbf{L}^4(\Omega)} \| \vvv^{n+1}-\vvv^{n}\|_{\textbf{L}^2(\Omega)}\| \phi^{n+1}-\phi^{n}\|_{{L}^4(\Omega)}\\
&\leq \alpha_0(1+C_p)^2 \|\nabla\vu^n\|_{\textbf{L}^2(\Omega)}\tilde{C}_2 \| \phi^{n+1}-\phi^{n}\|_{{L}^2(\Omega)}\|\nabla( \phi^{n+1}-\phi^{n})\|_{\textbf{L}^2(\Omega)}\\
&\leq \frac{1}{2}\alpha_0(1+C_p)^2\mathcal{C}_1\tilde{C}_2(\| \phi^{n+1}-\phi^{n}\|_{{L}^2(\Omega)}^2+\|\nabla( \phi^{n+1}-\phi^{n})\|_{\textbf{L}^2(\Omega)}^2).
\end{aligned}
\end{equation}
The subtraction between adjacent Ginzburg-Landau energy gives
\begin{equation}\label{subGinzLan}
 \begin{aligned}
 &\int_{\Omega} \bigg[\frac{\epsilon_1}{2} \big| \nabla \phi^{n+1} \big |^2+\frac{1}{\epsilon_2}  \omega(\phi^{n+1})\bigg] \dx - \int_{\Omega} 
 \bigg[ \frac{\epsilon_1}{2} \big | \nabla \phi^{n} \big |^2+\frac{1}{\epsilon_2}  \omega(\phi^{n})\bigg] \dx \\
 =& \int_{\Omega} \epsilon_1 \big(\nabla \phi^{n+1} - \nabla \phi^n \big)\cdot \nabla \phi^{n+1} \dx - \frac{1}{2}\int_{\Omega} \epsilon_1 \big|\nabla \phi^{n+1} - \nabla \phi^n \big|^2 \dx \\
 &+ \int_{\Omega} \frac{1}{\epsilon_2} \omega^\prime (\phi^n) (\phi^{n+1} - \phi^n)+ \frac{1}{2\epsilon_2}\omega^{(2)}(\zeta) (\phi^{n+1}-\phi^n)^2 \dx,
\end{aligned}
\end{equation}
where we use Taylor expansion up to the second order by taking $\zeta = \tau_0 \phi^{n+1} + (1-\tau_0 ) \phi^n, \tau_0 \in (0,1)$. Then use the linear property of the volume functional and apply the Cauchy-Schwarz inequality to obtain
\begin{equation}
\begin{aligned}
&\frac{\beta }{2} \big( V(\phi^{n+1})- \hat{V} \big)^2- \frac{\beta }{2}\big( V(\phi^{n})-\hat{V} \big)^2 \\
=& \frac{\beta}{2}\big(V(\phi^{n+1}) + V(\phi^n) - 2\hat{V}\big) (V(\phi^{n+1}) - V(\phi^n)) \\
=& \frac{\beta}{2}\big(2V(\phi^n) - 2\hat{V} + V(\phi^{n+1}) - V(\phi^n)\big) (V^\prime(\phi^n), \phi^{n+1}- \phi^n)\\
=& \beta\big(V(\phi^n)-\hat{V}\big)\int_{\Omega} V^\prime(\phi^n) (\phi^{n+1}-\phi^n)\dx + \frac{\beta}{2}\bigg[\int_{\Omega} V^\prime(\phi^n) (\phi^{n+1}-\phi^n)\dx\bigg]^2\\
\leq& \beta\big(V(\phi^n)-\hat{V}\big)\int_{\Omega} V^\prime(\phi^n) (\phi^{n+1}-\phi^n)\dx+ \frac{\beta}{2}\| V^\prime(\phi^n)\|_{L^2(\Omega)}^2 \| \phi^{n+1}-\phi^n\|_{L^2(\Omega)}^2.
\end{aligned}
\end{equation}
We conduct the estimation via Lemma \ref{StaJ} and (\ref{subGinzLan})
\begin{equation}
\begin{aligned}
&\mathcal{W}(\phi^{n+1},\vu^{n+1}) - \mathcal{W}(\phi^{n},\vu^{n})\\
=&-\frac{\mu}{2}\int_{\Omega} \vert{\rm D}\vu^{n+1} - {\rm D}\vu^n\vert^2- \int_{\Omega} \frac{\alpha(\phi^{n+1})}{2}\big\vert \vu^{n+1} - \vu^n\big\vert^2\\
&+\frac{1}{2}\big(\alpha^\prime(\phi^n)(\vu^n)^2-\alpha^\prime(\phi^n)\vu^n\cdot\vvv^{n},\phi^{n+1}-\phi^n \big)\\
&+\epsilon_1( \nabla\phi^{n+1}-\nabla\phi^{n},\nabla\phi^{n+1})-\frac{\epsilon_1}{2}\Vert \nabla\phi^{n+1}-\nabla\phi^{n}\Vert_{\textbf{L}^2(\Omega)}^2+\mathcal{R}_1^{n+1} + \mathcal{R}_2^{n+1}\\
&+\frac{1}{\epsilon_2}(\omega^\prime(\phi^n),\phi^{n+1}-\phi^n)+\frac{1}{2\epsilon_2}(\omega^{(2)}(\zeta_1),(\phi^{n+1}-\phi^n)^2)\\
&+\beta(V(\phi^{n})-\hat{V})(V^\prime(\phi^{n}),\phi_{n+1}-\phi_n)+ \frac{\beta}{2}\bigg[\int_{\Omega} V^\prime(\phi^n) (\phi^{n+1}-\phi^n)\dx\bigg]^2,
\end{aligned}
\end{equation}
and
\begin{equation}
\begin{aligned}
&\mathcal{W}(\phi^{n+1},\vu^{n+1}) - \mathcal{W}(\phi^{n},\vu^{n})\\
\leq &\bigg(\frac{1}{2}\alpha^\prime(\phi^n)|\vu^n|^2-\alpha^\prime(\phi^n)\vu^n\cdot\vvv^{n},\phi^{n+1}-\phi^n \bigg)+\mathcal{R}_1^{n+1} + \mathcal{R}_2^{n+1}\\
&+(\nabla\phi^{n+1}-\nabla\phi^{n},\epsilon_1\nabla\phi^{n+1})
+\frac{1}{\epsilon_2}(\omega^\prime(\phi^n),\phi^{n+1}-\phi^n)+\frac{1}{2\epsilon_2}(\omega^{(2)}(\zeta_1),(\phi^{n+1}-\phi^n)^2)\\
&+\beta(V(\phi^{n})-\hat{V})(V^\prime(\phi^{n}),\phi_{n+1}-\phi_n)+ \frac{\beta}{2}\bigg[\int_{\Omega} V^\prime(\phi^n) (\phi^{n+1}-\phi^n)\dx\bigg]^2\\
=&\tau (\nu^{n+1},\mathcal{G}\nu^{n+1})-(S_0,(\phi^{n+1}-\phi^n)^2)-(S_1,|\nabla\phi^{n+1}-\nabla\phi^n|^2)+\mathcal{R}_1^{n+1} + \mathcal{R}_2^{n+1} \\
&+\frac{1}{2\epsilon_2}(\omega^{(2)}(\zeta_1),(\phi^{n+1}-\phi^n)^2)+ \frac{\beta}{2}\bigg[\int_{\Omega} V^\prime(\phi^n) (\phi^{n+1}-\phi^n)\dx\bigg]^2\\
\leq&\bigg(\frac{1}{2\epsilon_2}\|\omega^{(2)}(\phi)\|_{L^\infty(\Omega)} +K_{\Omega}\tilde{C}_1^2\mathcal{C}_2+\frac{1}{2}\alpha_0(1+C_p)^2\mathcal{C}_1\tilde{C}_2+\frac{\beta\vert \Omega\vert^2}{2}-S_0\bigg) \|\phi^{n+1}-\phi^n\|_{L^2(\Omega)}^2\\
&+\bigg(\frac{1}{2}\alpha_0(1+C_p)^2\mathcal{C}_1\tilde{C}_2-S_1\bigg) \|\nabla\phi^{n+1}-\nabla\phi^n\|_{L^2(\Omega)}^2+ \tau(\nu^{n+1},\mathcal{G}\nu^{n+1})\\
\leq& \tau (\nu^{n+1},\mathcal{G}\nu^{n+1}),
\end{aligned}
\end{equation}
thanks to the nonnegative operator $\mathcal{G}$ and sufficient large values $S_0$ and $S_1$. 
\end{proof}
\begin{Remark}
The popular method to construct an energy stable scheme of gradient flow is the class of convex splitting method \cite{Eyre1998} involving inner iteration. While it requires updating P.D.E. repeatedly for each time step in the case of topology optimization. The stabilization treats the nonlinear terms explicitly and adds a stabilization term to avoid strict time step constraints \cite{Shen1999}. In our case, the instability factors come from the nonlinear term in Navier-Stokes equations and the introduction of adjoint variables. The other method to construct the energy stable scheme is introducing a scalar auxiliary variable (see \cite{Shen2019}). However, such a method can only keep the energy stability of the modified energy instead of the original energy. 
\end{Remark}

Then we introduce the bounded value function space
\begin{equation}
\textbf{BV}(\Omega; [0,1]):=\{ \xi\in L^\infty(\Omega),\ 0\leq \xi \leq 1 \},
\end{equation}
and the corresponding projection operator $\mathcal{P}$ defines
\begin{equation}\label{proj}
\begin{aligned}
\mathcal{P}(\cdot): L^\infty(\Omega) &\rightarrow L^\infty(\Omega),\\
\zeta &\longmapsto \inf_{\xi\in \textbf{BV}(\Omega; [0,1])} \Vert \xi - \zeta\Vert_{L^\infty(\Omega)}.
\end{aligned}
\end{equation}
Modify the volume function $V(\phi):=\int_{\Omega}(1-\mathcal{P}(\phi))$ and $\alpha(\phi)=\alpha_0(1-\mathcal{P}(\phi))$ to obtain the following result.

\begin{Lemma}\label{prolemma}
Let $\tilde{\phi} = \mathcal{P}(\phi)$ be the projected phase field function where the projection operator is defined (\ref{proj}). Suppose $({\phi}, \vu, p )$ is the solution pair of (\ref{nsweak}). Then $(\tilde{\phi}, \vu, p)$ is still the solution pair of (\ref{nsweak}) and the total energy decreases satisfying that
\begin{equation}
\mathcal{W}(\tilde{\phi},\vu)\leq \mathcal{W}(\phi,\vu).
\end{equation}
\end{Lemma}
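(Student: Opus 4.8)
The plan is to read $\mathcal{P}$ as the pointwise cut-off $\tilde\phi(\bm x)=\min\{1,\max\{0,\phi(\bm x)\}\}$, i.e. the member of $\textbf{BV}(\Omega;[0,1])$ that realizes the projection in \eqref{proj}. Two facts I would record first are that $\mathcal{P}$ is idempotent, $\mathcal{P}\circ\mathcal{P}=\mathcal{P}$, and that after the modification both the permeability $\alpha(\phi)=\alpha_0(1-\mathcal{P}(\phi))$ and the volume $V(\phi)=\int_\Omega(1-\mathcal{P}(\phi))$ depend on $\phi$ \emph{only} through $\mathcal{P}(\phi)$. Consequently $\alpha(\tilde\phi)=\alpha_0(1-\mathcal{P}(\mathcal{P}(\phi)))=\alpha_0(1-\mathcal{P}(\phi))=\alpha(\phi)$ and likewise $V(\tilde\phi)=V(\phi)$. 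Since the Navier--Stokes weak form \eqref{nsweak} sees $\phi$ only through the term $(\alpha(\phi)\vu,\vvv)$, replacing $\phi$ by $\tilde\phi$ leaves every equation literally unchanged; hence the same pair $(\vu,p)$ solves \eqref{nsweak} with $\tilde\phi$, which is the first assertion.

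For the energy inequality I would compare $\mathcal{W}(\tilde\phi,\vu)$ with $\mathcal{W}(\phi,\vu)$ term by term in \eqref{TopPro}. The dissipation functional $J(\phi,\vu)=\int_\Omega\big(\tfrac12\mu|\mathrm D\vu|^2+\tfrac12\alpha(\phi)|\vu|^2\big)$ is unchanged because $\vu$ is the common solution and $\alpha(\tilde\phi)=\alpha(\phi)$, while the volume penalty $\tfrac12\beta(V(\cdot)-\hat V)^2$ is unchanged since $V(\tilde\phi)=V(\phi)$. Thus only the Ginzburg--Landau part can change, and it suffices to prove the two inequalities $\|\nabla\tilde\phi\|_{L^2(\Omega)}\le\|\nabla\phi\|_{L^2(\Omega)}$ and $\int_\Omega\omega(\tilde\phi)\le\int_\Omega\omega(\phi)$.

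For the potential term I would argue pointwise: if $s\in[0,1]$ then $\mathcal{P}(s)=s$ and the two values coincide, whereas if $s<0$ or $s>1$ then $\mathcal{P}(s)\in\{0,1\}$ gives $\omega(\mathcal{P}(s))=\omega(0)=\omega(1)=0\le\omega(s)$, because $\omega(s)=s^2$ (resp. $(s-1)^2$) is nonnegative there; integrating over $\Omega$ then yields the claim. For the gradient term I would invoke the chain rule for the composition of the Lipschitz clamp with an $H^1$ function (Stampacchia's lemma): $\tilde\phi\in H^1(\Omega)$ with $\nabla\tilde\phi=\mathbbm{1}_{\{0<\phi<1\}}\nabla\phi$ a.e., so that $|\nabla\tilde\phi|\le|\nabla\phi|$ a.e. and the $L^2$ inequality follows. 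Summing the four comparisons gives $\mathcal{W}(\tilde\phi,\vu)\le\mathcal{W}(\phi,\vu)$.

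The only delicate point is this gradient estimate, which rests on the truncation belonging to $H^1(\Omega)$ with gradient supported on the set $\{0<\phi<1\}$ — precisely the classical fact that Lipschitz post-composition preserves $H^1$ and that level-set truncation cannot increase the Dirichlet energy. Everything else is the idempotence bookkeeping of paragraph one and an elementary three-case check for $\omega$. I would also remark in passing that \eqref{proj} must be read as selecting this pointwise minimizer, since its literal right-hand side is a scalar rather than a function.
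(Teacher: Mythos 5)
Your proposal is correct and follows essentially the same route as the paper: idempotence of $\mathcal{P}$ gives invariance of $\alpha$ and $V$ (hence the pair $(\vu,p)$ still solves \eqref{nsweak} and both $J$ and the volume penalty are unchanged), while the Ginzburg--Landau terms decrease via a pointwise comparison on the set where $\phi$ leaves $[0,1]$. Your explicit appeal to Stampacchia's lemma for $\nabla\tilde\phi=\nabla\phi$ a.e.\ on $\{0<\phi<1\}$ and zero elsewhere simply makes rigorous what the paper asserts directly, namely that $\nabla\tilde\phi$ vanishes on the set where $\tilde\phi$ is a.e.\ constant.
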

\begin{proof}
The projection operator satisfies $\mathcal{P}^2=\mathcal{P}$. Then using the definition of the permeability function defined on the whole domain and the volume function, it holds
\begin{equation}\label{invEV}
\begin{aligned}
&\alpha(\tilde{\phi})=\alpha(\mathcal{P}({\phi}))=\alpha_0(1-\mathcal{P}^2(\phi))= \alpha_0(1-\mathcal{P}(\phi)) = \alpha(\phi),\\
&V(\tilde{\phi})=\int_{\Omega}[1- \mathcal{P}^2(\phi)]\dx =\int_{\Omega} 1-\mathcal{P}(\phi)\dx  = V(\phi),
\end{aligned}
\end{equation}
such that $(\tilde{\phi}, \vu)$ is the solution pair of (\ref{nsweak}). Let ${\Omega}$ be partitioned into two disjoint domains $\overline{{\Omega}}= \overline{D_1} \cup \overline{D_2}$ where
\begin{equation}
D_1 := \{\bm x\in \Omega,\ \phi\in [0,1]\}, \ D_2 := \{\bm x\in \Omega,\ \phi\notin [0,1]\}.
\end{equation}
Then we obtain
\begin{equation}\label{pros1}
\begin{aligned}
\int_{\Omega} \frac{\epsilon_1}{2}|\nabla \phi|^2 \dx & = \int_{D_1} \frac{\epsilon_1}{2}|\nabla \phi|^2 \dx+\int_{D_2} \frac{\epsilon_1}{2}|\nabla \phi|^2 \dx\\
&\geq \int_{D_1} \frac{\epsilon_1}{2}|\nabla {\phi}|^2 \dx= \int_{\Omega} \frac{\epsilon_1}{2}|\nabla \tilde{\phi}|^2 \dx,
\end{aligned}
\end{equation}
where $\nabla\tilde{\phi}$ vanishes on $D_2$ for $\tilde{\phi}$ is a.e. constant. Furthermore, by the nonnegativity of the double well potential, we have
\begin{equation}\label{pros2}
\begin{aligned}
\int_{\Omega}\frac{1}{\epsilon_2} \omega(\phi)\dx&=\int_{D_1}\frac{1}{\epsilon_2} \omega(\phi)\dx+\int_{D_2}\frac{1}{\epsilon_2} \omega(\phi)\dx \\
&\geq  \int_{D_1}\frac{1}{\epsilon_2} \omega({\phi})\dx= \int_{\Omega}\frac{1}{\epsilon_2} \omega(\tilde{\phi})\dx,
\end{aligned}
\end{equation}
where $\omega(\tilde{\phi})$ vanishes on $D_2$ for $\tilde{\phi}$ takes value on 0 or 1. The invariant of the permeability function and volume function under the projection leads to
\begin{equation}\label{pros3}
J(\tilde{\phi}, \vu(\tilde{\phi})) = J(\phi, \vu(\phi)),
\end{equation}
and
\begin{equation}\label{pros4}
\frac{1}{2}\beta\big(V(\tilde{\phi})-\hat{V}\big)^2=\frac{1}{2}\beta\big(V({\phi})-\hat{V}\big)^2.
\end{equation}
Combining (\ref{pros1}) (\ref{pros2}) (\ref{pros3}) and (\ref{pros4}), we conclude the estimate.
\end{proof}

\begin{remark}
Given the previous phase field function $\phi^{n-1}$, then compute the velocity field $\vu^{n-1}$ via the Navier-Stokes equations. After that update the phase field function $\phi^n$ via the stabilized gradient flow (\ref{GradFlowScheme}) and then compute the corresponding velocity field $\vu^n$. In this way, the minimizing sequence $\{\phi^n, \vu^n \}_{n=1}^\infty$ is generated by repeating the above procedure. From Theorem \ref{EnerDecThm}, the sequence $\{\phi^n, \vu^n \}_{n=0}^\infty$ satisfies the energy dissipation $$\mathcal{W}(\phi^{n+1}, \vu^{n+1})\leq \mathcal{W}(\phi^{n}, \vu^{n})\leq\cdots\leq \mathcal{W}(\phi^{0}, \vu^{0}),$$ which guarantees the convergence and monotonicity. However, the phase field function may exceed the range $[0,1]$ causing inaccuracy in computing the Navier-Stokes equations. 

Another way to construct the convergent sequence meanwhile keeping phase field function in the range $[0,1]$ is computing $(\phi^n, \vu^n, \vvv^n)$ by (\ref{GradFlowScheme}), (\ref{nsweak}) and (\ref{adjequ}) first. Then use the projection $\mathcal{P}(\cdot)$ on phase field function to obtain $(\tilde{\phi}^n, \vu^n)$. We note that $(\tilde{\phi}^n, \vu^n)$ is still the solution pair of the Navier-Stokes equations (\ref{nsweak}) because the permeability function does not change by the projection operator. Then, use the stabilized gradient flow to obtain the next decreasing iteration
$$\mathcal{W}(\phi^{n+1},\vu^{n+1})\leq \mathcal{W}(\tilde{\phi}^{n},\vu^{n}).$$
Finally, we alternatively take the projection step and the stabilized gradient flow step to generate the sequence $\{\tilde{\phi}^n, \phi^n, \vu^n \}_{n=1}^\infty$ such that $$\mathcal{W}(\tilde{\phi}^{n+1},\vu^{n+1})\leq \mathcal{W}(\phi^{n+1},\vu^{n+1})\leq \mathcal{W}(\tilde{\phi}^{n},\vu^{n})\leq \mathcal{W}({\phi^{n}},\vu^{n})\leq\cdots\leq \mathcal{W}(\tilde{\phi}^{0}, \vu^{0})\leq \mathcal{W}(\phi^{0}, \vu^{0}),$$
which preserves the energy dissipation and bounds the phase field function in $[0,1]$.
\end{remark}

\section{Numerical realization}
In this section, we introduce the finite element to discretize the Navier-Stokes equations (\ref{nsweak}), the adjoint equation (\ref{adjequ}) and the gradient flow (\ref{GradFlowScheme}) of the phase field. 
\subsection{Finite element discretization}
Consider a family of unstructured meshes $\{ \mathcal{T} _h\}_{h >0}$ satisfying the union of triangular units $\overline{\Omega} = \bigcup _{K \in\mathcal{T}_h} \overline{K}$, where the mesh size is $ h:= \max_{K\in\mathcal{T}_h} h_K$  with  $h_K$ being the diameter of any $K \in \mathcal{T}_h$. 
Let us consider the conforming finite element subspaces characterized as the discrete velocity function space $\textbf{U}_h\subset \textbf{H}^1(\Omega)$ and the discrete pressure function as well as the phase field function space $W_h\subset L^2(\Omega)$. We shall also admit a \emph{compatibility} condition \cite{Brezzi,GR,Temam} between the discrete velocity and pressure spaces $\mathbf{U}_h$ and $W_h$ by assuming that there exists a positive constant $\beta_0>0$ such that
\begin{equation}\label{LBB}
\inf _{q_h \in W_h} \sup _{\mathbf{v}_h \in \mathbf{U}_h} \frac{ (q_h, \nabla\cdot \mathbf{v}_h )}{\left\|\nabla \mathbf{v}_h\right\|_{\textbf{L}^2(\Omega)}\left\|q_h\right\|_{L^2(\Omega)}} \geqslant \beta_0.
\end{equation}
Denote the space of bubble functions by
$\textbf{P}:=[\mathbb{P}_1\oplus {\rm span}\, b_K]^d$,
where the bubble function
$b_K:=\Pi_{i=1}^{d+1}\lambda_i$ and $\lambda_i(i=1,\cdots,d+1)$ are the barycentric coordinates of $K$. Then the MINI ($\mathbb{P}_1$-bubble/$\mathbb{P}_1$) element \cite{Brezzi,GR} for discretization of Navier-Stokes equations is given by
\begin{eqnarray*}
	&&\textbf{U}_h=\{\vec{v}_h\in C^0(\Omega)^d \,\big| \,\vec{v}_h|_K\in \textbf{P}\ \forall\ K\in\mathcal{T}_h\},\\
	&&W_h=\{q_h\in  C^0(\Omega) \, \big\lvert \,q_h|_K\in \mathbb{P}_1\ \forall\ K\in\mathcal{T}_h\}.
\end{eqnarray*}
The following subspaces are necessary to describe the Dirichlet boundary condition for the velocity field $\textbf{U}^h_{d} = \textbf{U}_h \cap \textbf{H}^1_d (\Omega)$ and for the adjoint function $\textbf{U}^h_{0} = \textbf{U}_h \cap \textbf{H}^1_{00} (\Omega)$. The phase field function is discretized in piece-wise linear function space $\phi_h \in W_h$. The discrete variational problem of Navier-Stokes equations (\ref{nsweak}) reads: find $(\vu_h, p_h)\in \textbf{U}^h_{d}\times W_h$ such that 
\begin{equation}\label{disNS}
\left\{
\begin{aligned}
&a(\vu_h,\vvv_h)+b(\vu_h,\vu_h,\vvv_h) + (\alpha(\phi_h)\vu_h,\vvv_h)-(p_h, \nabla \cdot\vvv_h)= (\vf,\vvv_h) \quad  && \forall \vvv_h \in \textbf{U}_h, \\
&(\nabla \cdot\vu_h,q_h)  = 0\quad && \forall q_h \in W_h.
\end{aligned}\right.
\end{equation}
Similarly, the discrete problem of adjoint equations (\ref{adjequ}) reads: find $(\vvv_h, q_h)\in \textbf{U}^h_{00}\times W_h$ such that 
\begin{equation}\left\{
\begin{aligned}
&a(\vvv_h,\vw_h)+\hat{b}(\vu_h,\vvv_h,\vw_h) - b(\vu_h,\vvv_h,\vw_h)+ (\alpha(\phi_h) \vvv_h, \vw_h)-(q_h, \nabla \cdot \vw_h)\\
&=a(\vu_h,\vw_h)+(\alpha(\phi_h)\vu_h, \vw_h),\qquad \forall \vw_h \in \textbf{U}_h,\\
& (z_h, \nabla\cdot \vvv_h) =0,\qquad  \forall  z_h \in W_h.
\end{aligned}\right.
\end{equation}
The discrete variational problem of gradient flow (\ref{GradFlowScheme}) is given by: find $(\phi^{n+1}_h, \nu_h^{n+1}) \in W_h\times W_h$ such that
\begin{equation}\label{disGradFlow}\left\{
\begin{aligned}
&\frac{1}{\tau}(\phi^{n+1}_h-\phi^{n}_h, \zeta_h) = (\mathcal{G}_h \nu^{n+1}_h, \zeta_h),\qquad && \forall  \zeta_h\in W_h \\
&(\nu^{n+1}_h, \psi_h) = (\epsilon_1 \nabla\phi^{n+1}_h, \nabla \psi_h) + \big(\mathcal{U}(\phi^n_h, \vu^n_h, \vvv^n_h),\psi_h\big) + \big(\mathcal{S}(\phi^{n+1}_h-\phi^n_h), \psi_h\big), && \forall \psi_h \in W_h,
\end{aligned}\right.
\end{equation}
where $\mathcal{G}_h$ is the discrete nonpositive symmetric operator such as the discrete Laplacian $\mathcal{G}_h = -\Delta_h$ for $H^{-1}$ gradient flow and the identity operator for $L^2$ gradient flow $\mathcal{G}_h = - {\rm I}$. For Allen-Cahn gradient flow, take the test function $\psi_h = \zeta_h$ in the second equation of (\ref{disGradFlow}) then insert it into the first equation to simplify the expression. Hence no intermediate variable is introduced. Since the stability proofs of stabilized semi-implicit schemes (\ref{GradFlowScheme}) are all variational, they can be directly extended to fully discrete stabilized semi-implicit schemes with mixed finite element methods.
\begin{Theorem}
Let assumptions in Theorem \ref{EnerDecThm} hold. Consider $(\phi^n_h, \vu^n_h, p^n_h)$ and $(\phi^{n+1}_h, \vu^{n+1}_h, p^{n+1}_h)$ are the solution pairs of (\ref{disNS}) where $\phi^{n+1}_h$ is updated by (\ref{disGradFlow}). Then the total energy has the monotonic-decaying property:
\begin{equation}
\mathcal{W} (\phi^{n+1}_h,\vu^{n+1}_h) - \mathcal{W} (\phi^{n}_h,\vu^{n}_h) \leq \tau(\nu^{n+1}_h,\mathcal{G}_h \nu^{n+1}_h),
\end{equation}
where $S_0$ and $S_1$ are sufficiently large numbers independent of $\phi_h$.
\end{Theorem}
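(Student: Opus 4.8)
The plan is to replicate, essentially line by line, the argument that established Theorem~\ref{EnerDecThm} in the conforming finite element setting, exploiting that the MINI element satisfies $\textbf{U}_h\subset \textbf{H}^1(\Omega)$ and $W_h\subset L^2(\Omega)$. Every manipulation in the proof of Theorem~\ref{EnerDecThm} is purely variational: it consists of testing the state equation, the adjoint equation and the gradient flow against admissible functions and combining the resulting identities. Because of conformity, each such identity has an exact discrete counterpart obtained by testing the discrete state problem (\ref{disNS}), the discrete adjoint problem and the discrete gradient flow (\ref{disGradFlow}) against functions in $\textbf{U}_h$ and $W_h$, with $\vu,\vvv,\phi$ replaced by $\vu_h,\vvv_h,\phi_h$ and $\mathcal{G}$ by $\mathcal{G}_h$. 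First I would record the discrete analogues of the two preparatory results, namely the Lipschitz bounds of Lemma~\ref{boundLemma} and the cost-functional decomposition of Lemma~\ref{StaJ}. For the former I would invoke the discrete inf-sup condition (\ref{LBB}) to secure existence, uniqueness and uniform boundedness of the discrete solutions, so that $\mathcal{C}_1,\mathcal{C}_2$ and hence $\tilde C_1,\tilde C_2$ may be chosen independent of both $\phi_h$ and the mesh size $h$; the trilinear estimate of Lemma~\ref{trlinearLemma}, the Poincar\'e inequality, and the embedding $\textbf{H}^1(\Omega)\hookrightarrow\textbf{L}^4(\Omega)$ all restrict to the conforming subspaces with the same mesh-independent constants $K_\Omega$ and $C_p$, so the subtraction arguments reproduce (\ref{subUest}) and the adjoint bound verbatim. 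The discrete version of Lemma~\ref{StaJ} then follows by testing the discrete adjoint problem with $(\vu_h^{n+1}-\vu_h^n,p_h^{n+1}-p_h^n)$ and subtracting two instances of (\ref{disNS}), giving the identical decomposition with the same residuals $\mathcal{R}_1^{n+1},\mathcal{R}_2^{n+1}$.

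The main obstacle, and the only genuinely new point, is the skew-symmetry of the convective term. The continuous proofs use the identities $b(\vu^n,\vvv,\vvv)=0$ and $b(\vu^n,\vvv,\vw)=-b(\vu^n,\vw,\vvv)$ of Lemma~\ref{trlinearLemma} repeatedly, and both rely on $\vu^n\in\textbf{H}^1(\mathrm{div}0,\Omega)$. For the MINI element the discrete velocity is only weakly divergence-free, satisfying $(\nabla\cdot\vu_h,q_h)=0$ for $q_h\in W_h$ but not being pointwise solenoidal, so those identities fail as stated. I would resolve this exactly as the discretization itself does, by using the skew-symmetrized trilinear form $\hat b(\vu,\vvv,\vw):=\tfrac12\big(b(\vu,\vvv,\vw)-b(\vu,\vw,\vvv)\big)$ in the convective contributions of the discrete equations. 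By construction $\hat b(\vu_h,\vvv_h,\vvv_h)=0$ and $\hat b(\vu_h,\vvv_h,\vw_h)=-\hat b(\vu_h,\vw_h,\vvv_h)$ for \emph{every} $\vu_h\in\textbf{U}_h$, regardless of its divergence, and $\hat b$ inherits the estimate of Lemma~\ref{trlinearLemma} with the same constant $K_\Omega$. With this substitution, each step where a diagonal convective term was discarded transfers unchanged, and every rearrangement of trilinear terms used in the discrete analogue of Lemma~\ref{StaJ} goes through.

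The remaining ingredients are insensitive to discretization: the elementary identity (\ref{iden}), the pointwise second-order Taylor expansion of the double-well potential $\omega$ controlled by $\|\omega^{(2)}\|_{L^\infty(\Omega)}$, and the Cauchy-Schwarz estimates all hold verbatim on $\textbf{U}_h$ and $W_h$. Collecting the discrete cost-functional decomposition, the discrete Ginzburg-Landau subtraction, and the volume-penalty estimate produces the same quadratic terms as in Theorem~\ref{EnerDecThm}, so imposing the identical two inequalities on $S_0$ and $S_1$ absorbs every quadratic residual and Taylor-remainder term, leaving only $\tau(\nu_h^{n+1},\mathcal{G}_h\nu_h^{n+1})$, which is nonpositive since $\mathcal{G}_h$ is a discrete nonpositive symmetric operator. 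This yields the claimed discrete energy inequality. Beyond the skew-symmetry point, the only place requiring genuine rather than formal checking is that $\tilde C_1,\tilde C_2,\mathcal{C}_1,\mathcal{C}_2$ stay independent of $h$, which hinges on the uniform discrete inf-sup constant $\beta_0$ in (\ref{LBB}).
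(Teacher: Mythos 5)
Your proposal is correct and follows the same overall route as the paper, whose own proof of this theorem is literally two sentences: replace $(\phi^j,\vu^j,p^j)$ and the adjoint variables by their discrete counterparts and ``follow the procedure of Theorem~\ref{EnerDecThm}'', justified by the earlier remark that all stability proofs are variational and therefore restrict to conforming subspaces. Where you genuinely go beyond the paper is in isolating the one step that does \emph{not} transfer verbatim: the identities $b(\vu,\vvv,\vvv)=0$ and $b(\vu,\vvv,\vw)=-b(\vu,\vw,\vvv)$ used in Lemmas~\ref{boundLemma} and~\ref{StaJ} require $\vu\in\textbf{H}^1({\rm div0},\Omega)$, whereas a MINI-element velocity satisfies $(\nabla\cdot\vu_h,q_h)=0$ only for $q_h\in W_h$ and is not pointwise solenoidal. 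Your fix via the skew-symmetrized form $\hat b(\vu,\vvv,\vw):=\tfrac12\bigl(b(\vu,\vvv,\vw)-b(\vu,\vw,\vvv)\bigr)$ is the standard and correct repair, and it is evidently what the authors had half in mind: their discrete adjoint system contains a symbol $\hat b$ that is never defined anywhere in the paper, while their proof text ignores the issue entirely. The same applies to your observation that $\mathcal{C}_1,\mathcal{C}_2,\tilde C_1,\tilde C_2$ must be $h$-uniform, which hinges on the inf-sup constant $\beta_0$ in (\ref{LBB}) and is likewise passed over in silence. One caveat on your own argument: replacing $b$ by $\hat b$ in the convective term changes the discrete state equation, so strictly speaking your energy inequality holds for a skew-symmetrized variant of (\ref{disNS}) rather than for (\ref{disNS}) exactly as printed; to cover the printed scheme you would instead have to estimate the defect terms of the form $\tfrac12\bigl(b(\vu_h,\vvv_h,\vw_h)+b(\vu_h,\vw_h,\vvv_h)\bigr)=\tfrac12\int_\Omega(\nabla\cdot\vu_h)\,\vvv_h\cdot\vw_h$, which measure the failure of discrete divergence-freeness and do not vanish for MINI elements. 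Either way, your writeup is more honest about the discrete setting than the paper's proof, which asserts the transfer without confronting this point.
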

\begin{proof}
Replace the continuous variables $(\phi^j, \vu^j, p^j)$ with the discrete variables $(\phi^j_h, \vu^j_h, p^j_h )$ as well as for the adjoint variables ($j=n,n+1$). Then follow the procedure of Theorem \ref{EnerDecThm} to obtain the conclusion.
\end{proof}

To verify the monotonic-decaying property of the projection in the fully-discrete sense, we refer to \cite[Lemma 4.1]{LiFu} with the phase field function discretized by the piecewise linear finite element method
\begin{equation}\label{disProDecay}
  \|\nabla(\mathcal{P}_h \phi_h) \|_{L^2(\Omega)}\leq \|\nabla \phi_h \|_{L^2(\Omega)},
\end{equation}
where $\mathcal{P}_h$ defines the corresponding discrete projection operator
\begin{equation}\label{projdis}
\begin{aligned}
\mathcal{P}_h(\cdot): W_h &\rightarrow W_h,\\
\phi_h &\longmapsto \inf_{\tilde{\phi}_h \in \textbf{BV}(\Omega; [0,1])\cap W_h} \Vert \tilde{\phi}_h - \phi_h \Vert_{L^\infty(\Omega)}.
\end{aligned}
\end{equation}
Denote the discrete permeability $\alpha(\phi_h):=\alpha_0(1-\mathcal{P}_h(\phi_h))$. Then the following monitonicity property holds.
\begin{Lemma}
Let $\tilde{\phi}_h = \mathcal{P}_h(\phi_h)$ be the projected phase field function where the projection operator is defined (\ref{disNS}). Suppose that $({\phi}_h, \vu_h, p_h)$ is the solution pair of (\ref{nsweak}), then $(\tilde{\phi}_h, \vu_h, p_h)$ is still the solution pair of (\ref{disNS}) and the total energy decreases satisfying that
\begin{equation}
\mathcal{W}(\tilde{\phi}_h,\vu_h)\leq \mathcal{W}(\phi_h,\vu_h).
\end{equation}
\end{Lemma}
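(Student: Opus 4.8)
The plan is to replicate the argument of Lemma~\ref{prolemma} at the discrete level, replacing the single continuous ingredient that does not survive the passage to piecewise-linear functions---the decay of the gradient energy---by the finite-element inequality (\ref{disProDecay}) borrowed from \cite[Lemma 4.1]{LiFu}. First I would record that the discrete projection in (\ref{projdis}) is idempotent, $\mathcal{P}_h^2=\mathcal{P}_h$, so that, exactly as in (\ref{invEV}), the discrete permeability $\alpha(\phi_h)=\alpha_0(1-\mathcal{P}_h(\phi_h))$ and the modified volume $V(\phi_h)=\int_\Omega(1-\mathcal{P}_h(\phi_h))$ are left invariant by the projection: $\alpha(\tilde\phi_h)=\alpha(\phi_h)$ and $V(\tilde\phi_h)=V(\phi_h)$. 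Since $\phi_h$ enters the discrete state system (\ref{disNS}) only through $\alpha(\phi_h)$, this invariance shows at once that $(\tilde\phi_h,\vu_h,p_h)$ remains a solution pair of (\ref{disNS}), which settles the first assertion.

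For the energy comparison I would proceed term by term in $\mathcal{W}$. The invariance of $\alpha$ and $V$ gives $J(\tilde\phi_h,\vu_h)=J(\phi_h,\vu_h)$ and $\tfrac12\beta(V(\tilde\phi_h)-\hat V)^2=\tfrac12\beta(V(\phi_h)-\hat V)^2$, so the dissipation functional and the volume-penalty term are unchanged, exactly as in (\ref{pros3})--(\ref{pros4}). The Ginzburg--Landau contribution then splits into its gradient part and its double-well part. For the gradient part I would invoke (\ref{disProDecay}) directly to obtain $\|\nabla\tilde\phi_h\|_{L^2(\Omega)}\le\|\nabla\phi_h\|_{L^2(\Omega)}$. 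For the double-well part I would establish $(\omega(\tilde\phi_h),1)\le(\omega(\phi_h),1)$ in the spirit of (\ref{pros2}), using that $\tilde\phi_h$ takes values in $[0,1]$ together with the quadratic growth of $\omega$ outside $[0,1]$; evaluating this term by a nodal (mass-lumped) quadrature reduces it to the pointwise nodal inequality $\omega(\mathcal{P}_h\phi_h(x_i))\le\omega(\phi_h(x_i))$, which holds because clipping a nodal value into $[0,1]$ never increases $\omega$. Summing the four contributions yields $\mathcal{W}(\tilde\phi_h,\vu_h)\le\mathcal{W}(\phi_h,\vu_h)$.

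The delicate point, and the reason the continuous proof cannot be transcribed verbatim, is precisely the gradient term. In Lemma~\ref{prolemma} one argues that $\nabla\tilde\phi$ vanishes on the clipped region $D_2$ because $\tilde\phi$ is a.e.\ constant there; for a piecewise-linear $\phi_h$ this fails, since an element may straddle the threshold $\{\phi_h=0\}$ or $\{\phi_h=1\}$, and the nodally clipped interpolant $\tilde\phi_h$ then carries a genuinely nonzero gradient across that element, so a naive elementwise $D_1/D_2$ decomposition is no longer available. This is exactly the obstruction resolved by \cite[Lemma 4.1]{LiFu}, whose estimate (\ref{disProDecay}) certifies that the gradient never increases under the discrete $L^\infty$-projection; with that inequality in hand, the remaining steps are the routine term-by-term comparison described above.
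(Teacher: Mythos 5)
Your proposal follows the same route as the paper's proof, which consists (in its entirety) of the instruction to repeat Lemma \ref{prolemma} at the discrete level and combine it with (\ref{disProDecay}): idempotence of $\mathcal{P}_h$ gives invariance of $\alpha$ and $V$, hence of the discrete state system, of $J$, and of the volume penalty, while (\ref{disProDecay}) replaces the $D_1/D_2$ argument for the gradient term. On all of these points your write-up is a faithful (and more detailed) rendering of what the paper does.

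Where you genuinely depart from the paper is the double-well term, and your instinct there is better than the paper's. The paper silently carries over the continuous argument that $\omega(\tilde\phi)$ vanishes on the clipped region, but for the nodally clipped interpolant this fails for exactly the reason you identify for the gradient: on an element straddling $\{\phi_h=0\}$ or $\{\phi_h=1\}$, $\tilde\phi_h$ is not the pointwise clip of $\phi_h$. Moreover, with exact integration the inequality $(\omega(\tilde\phi_h),1)\le(\omega(\phi_h),1)$ can actually fail: on a one-dimensional element $K$ of length $h$ with nodal values $-\epsilon$ and $1+\epsilon$, a change of variables gives
\begin{equation*}
\int_K \omega(\phi_h)\dx = \frac{h}{1+2\epsilon}\Bigl(\frac{1}{120}+\frac{2\epsilon^3}{3}\Bigr) \;<\; \frac{h}{120} \;=\; \int_K \omega(\tilde\phi_h)\dx
\end{equation*}
for small $\epsilon$, i.e., clipping \emph{increases} the double-well energy, and when $h^2$ is large compared with $\epsilon_1\epsilon_2$ this increase can even outweigh the gradient decrease supplied by (\ref{disProDecay}). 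Consequently your first suggested justification (values of $\tilde\phi_h$ in $[0,1]$ plus quadratic growth of $\omega$) does not suffice; it is precisely your second suggestion—evaluating $(\omega(\phi_h),1)$ by nodal, mass-lumped quadrature so that the comparison reduces to the nodewise inequality $\omega(\mathcal{P}_h\phi_h(x_i))\le\omega(\phi_h(x_i))$—that makes the statement true. That quadrature choice should therefore be stated as part of the definition of the discrete energy (or as a hypothesis of the lemma) rather than offered as an aside; with it, your proof is complete, and it repairs a gap that the paper's one-line proof leaves open.
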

\begin{proof}
Follow the Lemma \ref{prolemma} and combine (\ref{disProDecay}) to conclude the proof.
\end{proof}

The Lagrange multiplier $\ell^n$ is introduced to further eliminate the volume error for the Allen-Cahn gradient flow. Furthermore, the nonlinear term can be expressed by
\begin{equation}
\mathcal{U}(\phi^n_h, \vu^n_h, \vvv^n_h):=\frac{1}{\epsilon_2} \omega^\prime(\phi^{n}_h) + \frac{\eta_1}{\eta_2}\bigg( j_\phi(\phi^{n}_h,\vu^{n}_h)-\alpha^\prime(\phi^{n}_h)\vu^{n}_h\cdot\vvv^{n}_h\bigg) + \beta(V(\phi^{n}_h)-\hat{V})V^\prime(\phi^{n}_h)+\ell^n,
\end{equation}
where $\eta_1$ is the weighted parameter, $\eta_2:=\| j_\phi(\phi^{n}_h,\vu^{n}_h)-\alpha^\prime(\phi^{n}_h)\vu^{n}_h\cdot\vvv^{n}_h\|_{L^2(\Omega)}$ is the normalized factor. For updating the Lagrange multiplier, a Uzawa type scheme reads
\begin{equation}\label{Uzawa}
    \ell^{n+1} = \ell^n + \beta ( V(\phi_h^n)-V_0).
\end{equation}
Note that the discrete Navier-Stokes system in (\ref{disNS}) has a nonlinear convection term. A typically efficient Newton scheme preserving locally quadratic convergence rate is proposed by solving numerically a series of Oseen problems: Given $(\vu_h,p_h)$ for the previous approximation, find $({\vu_h^{\Delta}}, p_h^{\Delta})\in \textbf{U}^h_{d}\times W_h$ such that $ \forall\, (\vvv_h,q_h) \in \textbf{U}_h\times  W_h$
\begin{equation}
\left\{
\begin{aligned}
&a(\vu_h^{\Delta},\vvv_h)+b(\vu_h^{\Delta},\vu_h,\vvv_h)+b(\vu_h,\vu_h^{\Delta},\vvv_h) + (\alpha(\phi_h)\vu_h^{\Delta},\vvv_h)\\& -(p_h^{\Delta}, \nabla \cdot\vvv_h)= (\vf,\vvv_h)-a(\vu_h,\vvv_h)-b(\vu_h,\vu_h,\vvv_h)- (\alpha(\phi_h)\vu_h,\vvv_h)+(p_h, \nabla \cdot\vvv_h), \\
&(\nabla \cdot\vu_h^{\Delta},q_h)  = 0.
\end{aligned}\right.
\end{equation}
Then the approximate solution pair is updated by $(\vu_h, p_h) \leftarrow (\vu_h, p_h) + (\vu_h^{\Delta}, p_h^{\Delta}).$ In this section, we propose a topology optimization algorithm based on the scheme (\ref{disGradFlow}). We use the trick that once the state and adjoint variables are solved, the phase field is evolved via the gradient flow scheme for several steps to further improve the efficiency. Now, we are prepared to present Algorithm \ref{algAC} (Allen-Cahn) and Algorithm \ref{algCH} (Cahn-Hilliard) for topology optimization using the semi-implicit gradient flow scheme. We note that the projection $\mathcal{P}(\cdot)$ can not be used in the Cahn-Hilliard gradient for it may break the mass conservation.
\begin{algorithm}[htbp]
\SetAlgoLined
    \caption{Projected Allen-Cahn gradient flow algorithm for topology optimization of Navier-Stokes flows}\label{algAC}
    \KwData{Given the maximum iteration times $N, N_{\phi}$, the stopping tolerance $\epsilon_{\vu}$ \\ Initialize phase field function $\phi_0$ and $n=0$}
    \While{$n \leq N$}{
    \emph{Step 1}: Solve state variables $(\bm u^{n+1}_h, p_h^{n+1})$ of Navier-Stokes equations \\
        \While{$\Vert \vu_h^\Delta \Vert_{\textbf{L}^\infty (\Omega)} \leq \epsilon_{\vu} $ }
        {(1). Compute the increment $ (\vu_h^\Delta, p_h^\Delta)$ via Newton iteration\\
        (2). Update the solution by 
        $\vu_h \leftarrow \vu_h +  \vu_h^\Delta,\quad p_h \leftarrow p_h +  p_h^\Delta$\\
        }
        \emph{Step 2}: Solve adjoint variables $(\bm v^{n+1}_h, q^{n+1}_h)$ from adjoint equations\\
        \quad Set $\phi^{n+1}_0 \leftarrow \phi^n$ and $k =0$\\
        \emph{Step 3}: Update the phase field function via Allen-Cahn gradient flow\\
        \While{$k \leq N_{\phi}$ }{
            (1). Update the phase field function $\phi^{n+1}_{k+1}$ with semi-implicit scheme \\
            (2). Use the projection by $\phi^{n+1}_{k+1}  \leftarrow \mathcal{P}(\phi^{n+1}_{k+1}) $\\}      
Set $\phi^{n+1}  \leftarrow \phi^{n+1}_{N_{\phi}}, \, n \leftarrow n+1$\\
    \emph{Step 4}: Update the Lagrange multiplier}
\end{algorithm}

\begin{algorithm}[htbp]
\SetAlgoLined
    \caption{Cahn-Hilliard gradient flow algorithm for topology optimization of Navier-Stokes flows}\label{algCH}
    \KwData{Given the maximum outer and inner iteration numbers $N, N_{\phi}$, and stopping tolerance $\epsilon_{\vu}$ \\ Initialize phase field function $\phi_0$ and $n=0$}
    \While{$n \leq N$}
    {\emph{Step 1}: Solve state variables $(\bm u^{n+1}_h, p_h^{n+1})$ of Navier-Stokes equations \\
    \emph{Step 2}: Solve adjoint variables $(\bm v^{n+1}_h, q^{n+1}_h)$ from adjoint equations\\
    \quad Set $\phi^{n+1}_0 \leftarrow \phi^n$ and $k =0$\\
    \emph{Step 3}: Update the phase field function via Cahn-Hilliard gradient flow \\
        \While{$k \leq N_{\phi}$ }
            {Update the phase field function $\phi^{n+1}_{k+1}$ with semi-implicit scheme \\
            }
    Set $\phi^{n+1}  \leftarrow \phi^{n+1}_{N_{\phi}}, \, n \leftarrow n+1$}
\end{algorithm}
\subsection{Numerical experiments}
Numerical simulations are performed with FreeFem++ \cite{Hecht}. All numerical results are performed on a computer with 12th Gen Intel(R) Core(TM) i7-12700 2.10 GHz and 16 GB memory. 
Set $\alpha(\phi):=\alpha_0(1-\mathcal{P}(\phi))$ where the permeability coefficient $\alpha_0=1000$ for the following Examples.
\begin{figure}[htbp]
\centering
\begin{tikzpicture}[scale=3.2]
	\draw[thick,fill=gray!10!white] (0.0,0.0) -- (1.0,0.0) -- (1.0,1.0) -- (0.0,1.0) -- cycle;
	\draw [thick, arrows = {-Latex[width=3pt, length=6pt]}] (0.0,0.1) -- (0.15,0.1);
	\draw [thick, arrows = {-Latex[width=3pt, length=6pt]}] (0.0,0.2) -- (0.15,0.2);
	\draw [thick, arrows = {-Latex[width=3pt, length=6pt]}] (0.0,0.3) -- (0.15,0.3);
	\draw [thick, arrows = {-Latex[width=3pt, length=6pt]}] (0.0,0.4) -- (0.15,0.4);
	\draw [thick, arrows = {-Latex[width=3pt, length=6pt]}] (0.0,0.5) -- (0.15,0.5);
	\draw [thick, arrows = {-Latex[width=3pt, length=6pt]}] (0.0,0.6) -- (0.15,0.6);
	\draw [thick, arrows = {-Latex[width=3pt, length=6pt]}] (0.0,0.7) -- (0.15,0.7);
	\draw [thick, arrows = {-Latex[width=3pt, length=6pt]}] (0.0,0.8) -- (0.15,0.8);
	\draw [thick, arrows = {-Latex[width=3pt, length=6pt]}] (0.0,0.9) -- (0.15,0.9);
	\draw [thick, arrows = {-Latex[width=3pt, length=6pt]}] (1.0,1/3) -- (1.15,1/3);
	\draw [thick, arrows = {-Latex[width=3pt, length=6pt]}] (1.0,2/3) -- (1.15,2/3);
	\draw [thick, arrows = {-Latex[width=3pt, length=6pt]}] (1.0,0.5) -- (1.15,0.5);
	\draw [thick, arrows = {-Latex[width=3pt, length=6pt]}] (1.0,0.5-1/12) -- (1.15,0.5-1/12);
	\draw [thick, arrows = {-Latex[width=3pt, length=6pt]}] (1.0,0.5+1/12) -- (1.15,0.5+1/12);
	\draw [arrows = {-Latex[width=5pt, length=5pt]}] (-0.05,0.0) -- (-0.05,1.0);
	\draw [arrows = {-Latex[width=5pt, length=5pt]}]   (-0.05,1.0)--(-0.05,0.0);
	\draw [arrows = {-Latex[width=5pt, length=5pt]}]   (0,-0.05)--(1,-0.05);
	\draw [arrows = {-Latex[width=5pt, length=5pt]}]   (1,-0.05) -- (0,-0.05);
	\draw [arrows = {-Latex[width=5pt, length=5pt]}] (1-0.05,1/3) -- (1-0.05,2/3);
	\draw [arrows = {-Latex[width=5pt, length=5pt]}]   (1-0.05,2/3)--(1-0.05,1/3);
	\draw (0.5,-0.1) node[scale=0.7] {$1L$};
	\draw (-0.1,0.5) node[scale=0.7] {$1L$};
	\draw (0.9,0.5) node[scale=0.7] {$\frac{1}{3}L$};
	\draw (0.5,0.5) node[scale=1] {$\Omega$};
\end{tikzpicture}
\qquad
\begin{tikzpicture}[scale=3.2]
\draw[thick,fill=gray!10!white] (0.0,0.0) -- (1.5,0.0) -- (1.5,1.0) -- (0.0,1.0) -- cycle;
\draw[thick, dashed] (0.0,0.15) .. controls (0.15,0.25) .. (0.0,0.35);
\draw[thick, dashed] (0.0,0.65) .. controls (0.15,0.75) .. (0.0,0.85);
\draw [arrows = {-Latex[width=4pt, length=4pt]}]   (0.0,0.25) -- (0.11,0.25);
\draw [arrows = {-Latex[width=4pt, length=4pt]}]   (0.0,0.3) -- (0.075,0.3);
\draw [arrows = {-Latex[width=4pt, length=4pt]}]   (0.0,0.2) -- (0.075,0.2);
\draw [arrows = {-Latex[width=4pt, length=4pt]}]   (0.0,0.75) -- (0.11,0.75);
\draw [arrows = {-Latex[width=4pt, length=4pt]}]   (0.0,0.8) -- (0.075,0.8);
\draw [arrows = {-Latex[width=4pt, length=4pt]}]   (0.0,0.7) -- (0.075,0.7);
\draw (0.75,0.5) node[scale=1] {$\Omega$};
\draw [arrows = {-Latex[width=5pt, length=5pt]}] (0.0,-0.05) -- (1.5,-0.05);
\draw [arrows = {-Latex[width=5pt, length=5pt]}]     (1.5,-0.05) -- (0.0,-0.05);
\draw [arrows = {-Latex[width=4pt, length=4pt]}]   (1.5,0.75) -- (1.6,0.75);
\draw [arrows = {-Latex[width=4pt, length=4pt]}]   (1.5,0.8) -- (1.6,0.8);
\draw [arrows = {-Latex[width=4pt, length=4pt]}]   (1.5,0.7) -- (1.6,0.7);
\draw [arrows = {-Latex[width=4pt, length=4pt]}]   (1.5,0.25) -- (1.6,0.25);
\draw [arrows = {-Latex[width=4pt, length=4pt]}]   (1.5,0.3) -- (1.6,0.3);
\draw [arrows = {-Latex[width=4pt, length=4pt]}]   (1.5,0.2) -- (1.6,0.2);
\draw [arrows = {-Latex[width=5pt, length=5pt]}] (-0.05,0.0) -- (-0.05,1.0);
\draw [arrows = {-Latex[width=5pt, length=5pt]}]   (-0.05,1.0)--(-0.05,0.0);
\draw (0.75,-0.1) node[scale=0.7] {$1.5L$};
\draw (-0.1,0.5) node[scale=0.7] {$1L$};
\draw [arrows = {-Latex[width=5pt, length=5pt]}]   (1.5-0.05,0.5-0.15)--(1.5-0.05,0.5+0.15);
\draw [arrows = {-Latex[width=5pt, length=5pt]}]   (1.5-0.05,0.5+0.15) -- (1.5-0.05,0.5-0.15);
\draw [arrows = {-Latex[width=5pt, length=5pt]}]   (1.5-0.05,0.15) -- (1.5-0.05,0.5-0.15);
\draw [arrows = {-Latex[width=5pt, length=5pt]}]   (1.5-0.05,0.5-0.15) -- (1.5-0.05,0.15);
\draw [arrows = {-Latex[width=5pt, length=5pt]}]   (1.5-0.05,0.5+0.15) -- (1.5-0.05,0.85);
\draw [arrows = {-Latex[width=5pt, length=5pt]}]   (1.5-0.05,0.85) -- (1.5-0.05,0.5+0.15) ;
\draw (1.5-0.15,0.5) node[scale=0.7] {$0.3L$};
\draw (1.5-0.15,0.75) node[scale=0.7] {$0.2L$};
\draw (1.5-0.15,0.25) node[scale=0.7] {$0.2L$};
\end{tikzpicture}
\caption{Design domains for different Examples in 2d space.}
\label{designdomains2d}
\end{figure}
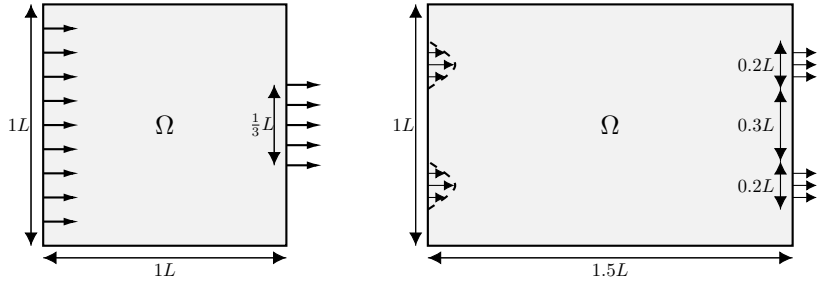

\textbf{Example 1 (Diffuser in 2d)}: Consider a benchmark of the shape design for the diffuser in 2d space (see \cite{Borrvall2003}). The computational domain is set to be a square $\Omega=[0,1]^2$ (see Fig. \ref{designdomains2d} left). The flow on the inlet is imposed by the prescribed function $\vu_d=(1,0)^{\rm T}$. The volume target is set to be $\hat{V}=0.4$ of the solid phase. The basic parameters are given as follows: $\kappa_1=0.001, \kappa_2=0.1, \beta=5$ and $\eta_1 = 1$. Set $N_{\phi} = 10$ for Allen-Cahn flow and $N_{\phi}=1$ for Cahn-Hilliard flow. Choose the initial phase field function be the constant $\phi_0 = 0.5$.
\begin{itemize}
    \item Test the performance of stabilized Allen-Cahn gradient flow by Algorithm \ref{algAC} for realizing the topology optimization. No stabilized terms and projection operator are used for the Allen-Cahn gradient flow in situation 1. Set $S_0=100$ for situation 2, $S_0=100, S_1=1$ for situation 3, and $S_0=100, S_1=1$ as well as the projection operator for situation 4. The maximum and minimum of the phase field function are presented in Table \ref{TabACTest} during shape evolution showing that the stabilized Allen-Cahn gradient flow with the projection operator behaves well even in large time step $\tau = 0.2$ (see optimal shape in Fig. \ref{exp1CHGigTAC} right). Then fix $\tau =0.005, S_0=1$, and $S_1=0.1$. 
    The optimal shapes and the corresponding velocity fields are shown in Fig. \ref{exp1Evo} with different viscous parameters. The curves of convergence histories for total energy and volume errors are presented in Fig. \ref{exp1obj} demonstrating the property of energy decreasing and the high accuracy of volume control.

    \item Next, consider algorithm \ref{algCH} by the Cahn-Hilliard gradient flow for topology optimization. Choose $S_0 = 1$ and $S_1 = 0.5$ for the stabilized parameters. Though the Cahn-Hilliard gradient flow has the property of mass conservation, we note that the mass of solid region $\int_{\Omega}(1-\phi)$ is not equivalent to volume function $V(\phi)$ in which the phase field function may exceed the range $[0,1]$. The stabilized parameters are well chosen such that the phase field function is well controlled in the range $[0,1]$ during shape evolution (see Table \ref{TabCHTest}) to meet the demand of volume constraint. The optimal shapes are shown in Fig. \ref{exp1CHGigTAC} (left and middle) with different viscous parameters. The convergence histories of total energy and mass variation are shown in Fig. \ref{exp1CHobj} showing that the Chan-Hiliard gradient flow scheme proposed has the energy dissipation property.
\end{itemize}

\begin{table}[htbp]
\centering
\begin{tabular}{|c | c c | c  c | c  c   | c  c |}
\hline \ & \multicolumn{2}{c}{Situation 1} & \multicolumn{2}{c}{Situation 2} & \multicolumn{2}{c}{Situation 3} & \multicolumn{2}{c|}{Situation 4}\\
\hline iter & $\max \phi$ & $\min \phi$& $\max \phi$ & $\min \phi$ & $\max \phi$ & $\min \phi$& $\max \phi$ & $\min \phi$ \\
\hline 0   & 0.5    & 0.5     & 0.5  & 0.5  & 0.5 & 0.5    & 0.5  & 0.5 \\
\hline 1   & 1e73   & -1e76   & 1.22 & 0.51 & 0.86 & 0.54  & 0.89 & 0.54 \\
\hline 20  & -      & -       & 1.46 & 0.24 & 1.37 & -0.06 & 1.00 & 0.00\\
\hline 40  & -      & -       & 1.38 &-0.01 & 1.36 & -0.03 & 1.00 & 0.00\\
\hline 60  & -      & -       & 1.35 & 0.00 & 1.34 & -0.02 & 1.00 & 0.00\\
\hline 80  & -      & -       & 1.39 & 0.00 & 1.34 & -0.01 & 1.00 & 0.00\\
\hline 100 & -      & -       & 1.40 & 0.00 & 1.33 & -0.01 & 1.00 & 0.00\\
\hline
\end{tabular}
\caption{Tests for effects of the stabilizers and the projection operator for Example 1 with $\tau =0.2$.}
\label{TabACTest}
\end{table}
\begin{table}[htbp]
\centering
\begin{tabular}{| c c  c  c  c | c  c  c  c  c |}
\hline \multicolumn{5}{|c|}{$\mu=0.01$} & \multicolumn{5}{|c|}{$\mu=0.1$}\\
\hline \ & \multicolumn{2}{c|}{$\tau = 0.0025$} & \multicolumn{2}{c|}{$\tau = 0.01$}& \ & \multicolumn{2}{c|}{$\tau = 0.001$} & \multicolumn{2}{c|}{$\tau = 0.01$}\\
\hline
iter & $\max \phi$ & $\min \phi$& $\max \phi$ & $\min \phi$ & iter & $\max \phi$ & $\min \phi$ & $\max \phi$ & $\min \phi$ \\ \hline 
  0 & 0.65& 0.65  & 0.65 & 0.65  & 0  & 0.65 & 0.65   & 0.65 & 0.65\\
 20 & 0.67& 0.63  & 0.78 & 0.55  & 20 & 0.67 & 0.63   & 0.77 & 0.55\\
 40 & 1.03& -0.04 & 1.02 & -0.02 & 40 & 1.03 & -0.02  & 1.02 & -0.02\\
60  & 1.00& -0.06 & 1.03 & -0.03 & 60 & 1.00 & -0.06  & 1.03 & -0.03\\
80  & 1.01& -0.05 & 1.03 & -0.03 & 80 & 1.01 & -0.05  & 1.03 & -0.03\\
100 & 1.01& -0.05 & 1.03 & -0.03 &100 & 1.01 & -0.05  & 1.03 & -0.03\\
\hline
\end{tabular}
\caption{Tests for range of phase field function by Algorithm \ref{algCH} for Example 1 with different $\tau$ and $\mu$.}
\label{TabCHTest}
\end{table}


\begin{figure}[htbp]
\begin{minipage}[b]{0.33\textwidth}
\centering
    \includegraphics[width=1.8 in]{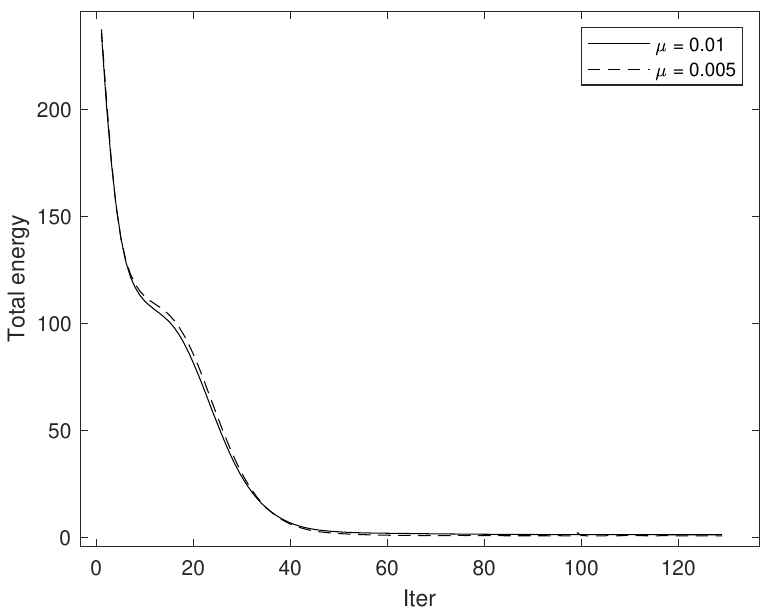}
\end{minipage}
\begin{minipage}[b]{0.33\textwidth}
\centering
    \includegraphics[width=1.8 in]{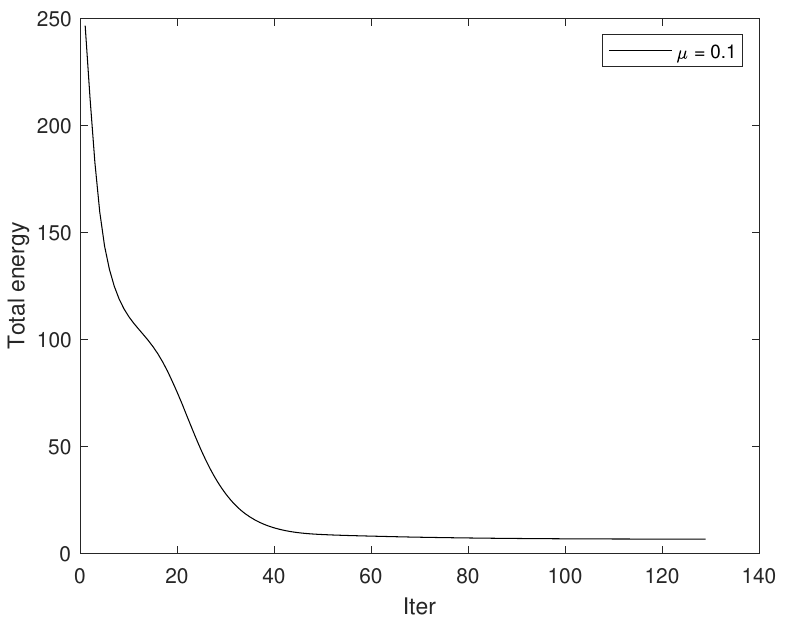}
\end{minipage}
\begin{minipage}[b]{0.33\textwidth}
\centering
    \includegraphics[width=1.8 in]{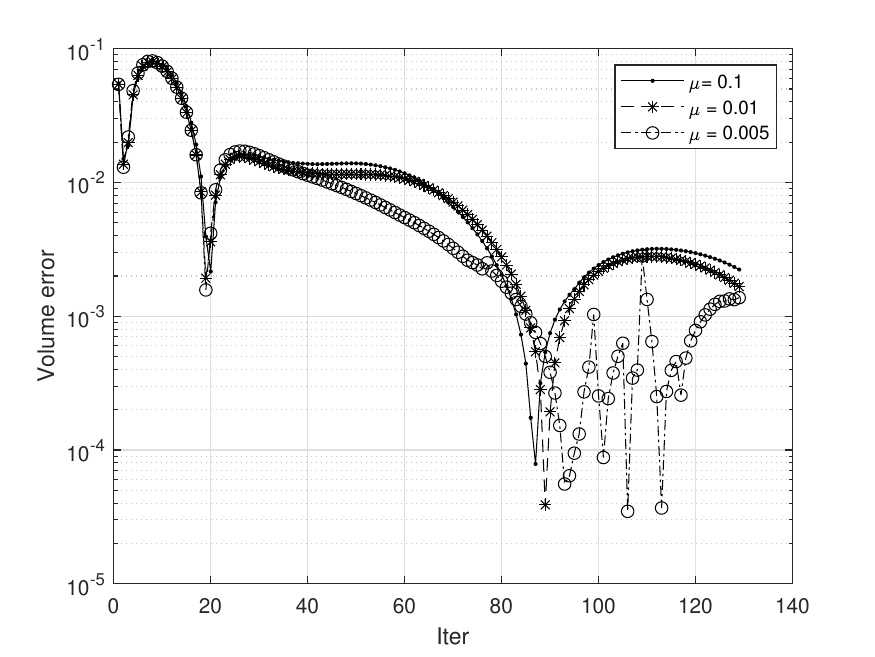}
\end{minipage}
\caption{Convergence histories of energy by Algorithm 1 ($\mu=0.01, 0.005$ left and $0.1$ middle) and volume errors (right) for Example 1.}
\label{exp1obj} 
\end{figure}

\begin{figure}[htbp]
\begin{minipage}[b]{0.33\textwidth}
\centering
    \includegraphics[width=1.67 in]{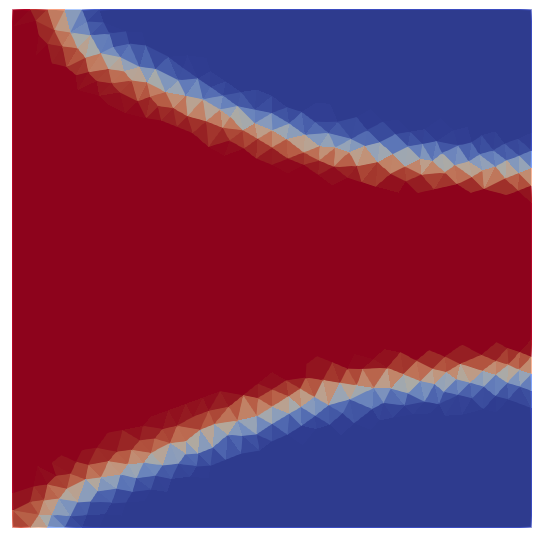}
\end{minipage}
\begin{minipage}[b]{0.33\textwidth}
\centering
    \includegraphics[width=1.65 in]{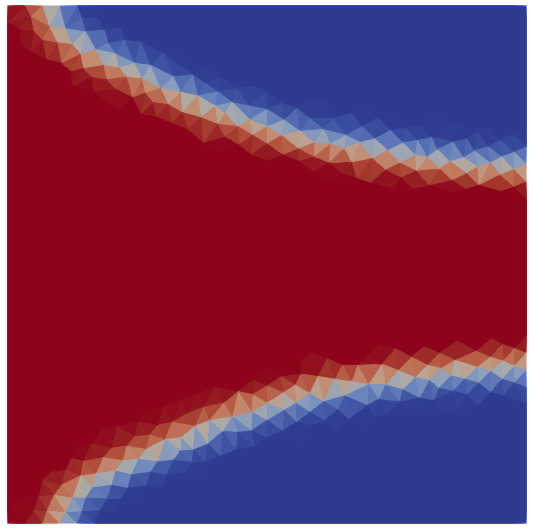}
\end{minipage}
\begin{minipage}[b]{0.33\textwidth}
\centering
    \includegraphics[width=1.7 in]{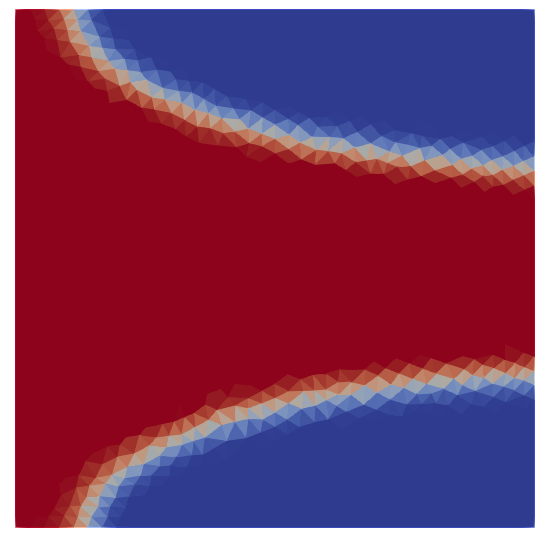}
\end{minipage}
\\
\begin{minipage}[b]{0.33\textwidth}
\centering
    \includegraphics[width=1.9 in]{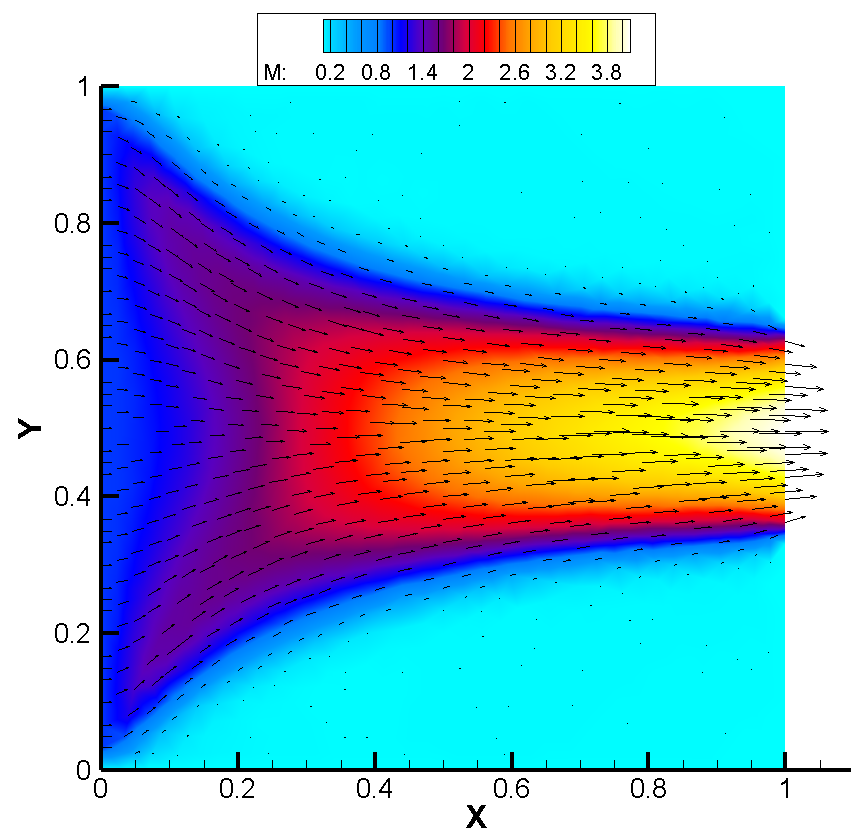}
\end{minipage}
\begin{minipage}[b]{0.33\textwidth}
\centering
    \includegraphics[width=1.9 in]{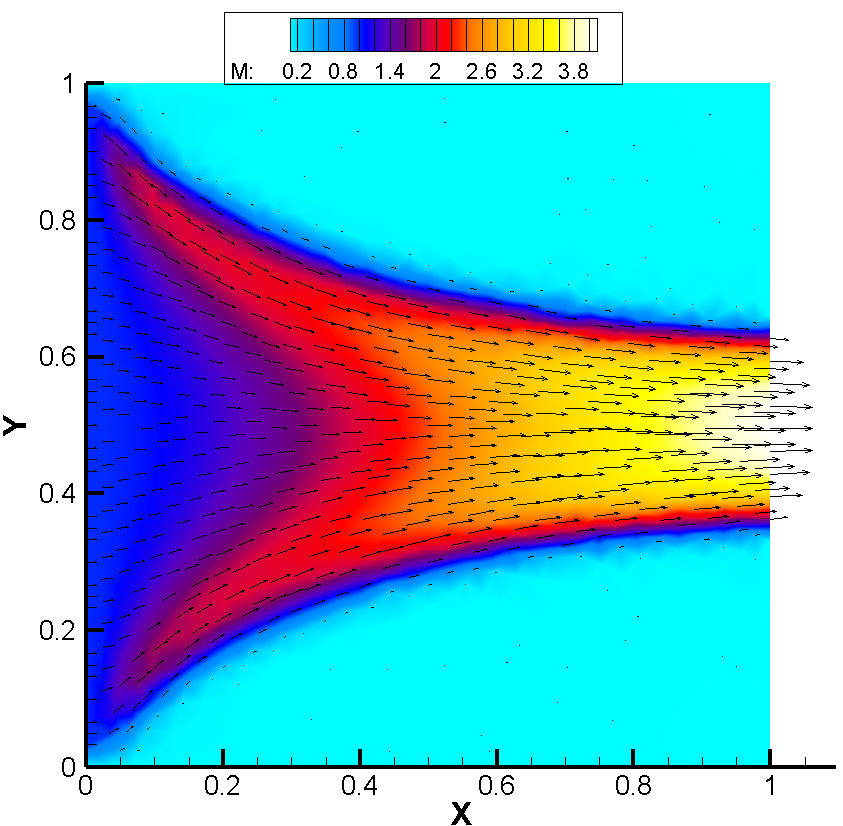}
\end{minipage}
\begin{minipage}[b]{0.33\textwidth}
\centering
    \includegraphics[width=1.9 in]{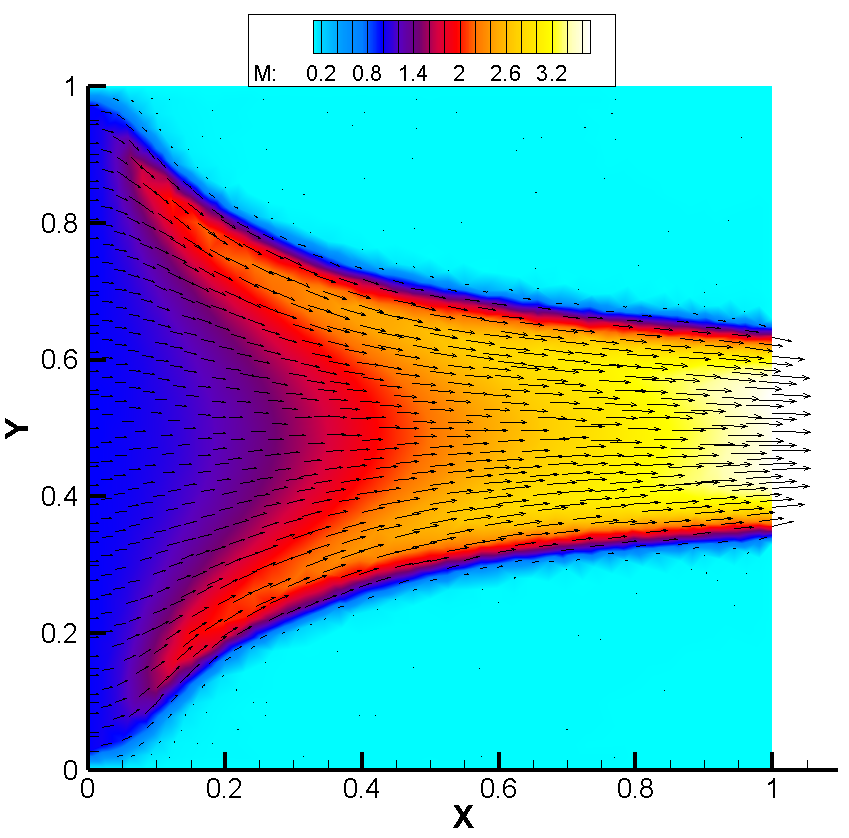}
\end{minipage}
\caption{Optimized distributions (line 1) and velocity fields (line 2) for Example 1: $\mu=0.1$, $0.01$, and $0.005$ from left to right.}
\label{exp1Evo} 
\end{figure}

\begin{figure}[htbp]
\begin{minipage}[b]{0.33\textwidth}
\centering
    \includegraphics[width=1.6 in]{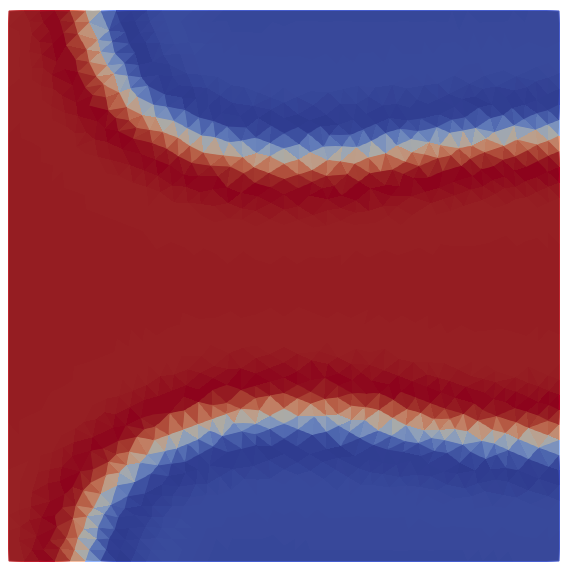}
\end{minipage}
\begin{minipage}[b]{0.33\textwidth}
\centering
    \includegraphics[width=1.6 in]{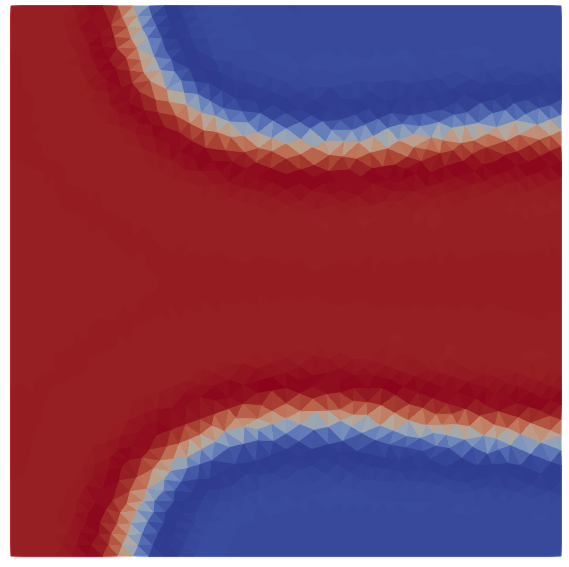}
\end{minipage}
\begin{minipage}[b]{0.33\textwidth}
\centering
    \includegraphics[width=1.6 in]{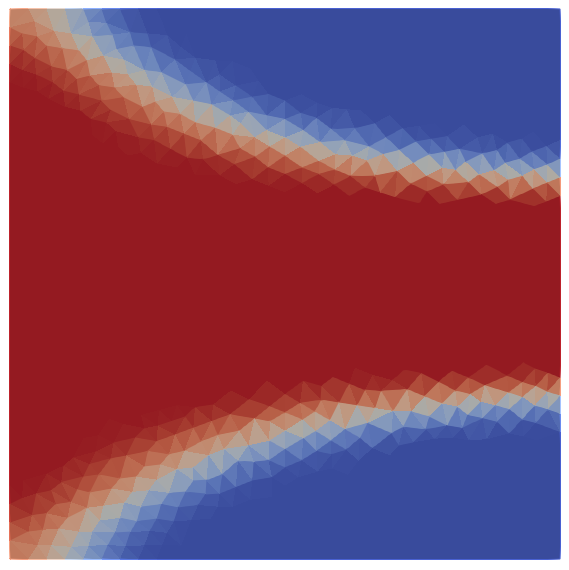}
\end{minipage}
\caption{Optimal distributions by Algorithm \ref{algCH} left ($\mu=0.1$), middle ($\mu=0.01$) and  $\tau =0.2$ right.}
\label{exp1CHGigTAC} 
\end{figure}

\begin{figure}[htbp]
\begin{minipage}[b]{0.5\textwidth}
\centering
    \includegraphics[width=2.0 in]{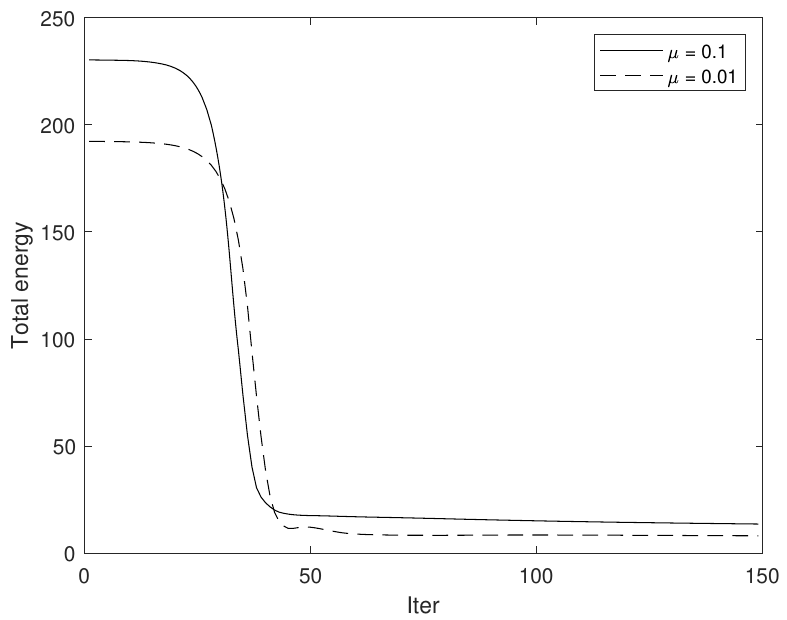}
\end{minipage}
\begin{minipage}[b]{0.5\textwidth}
\centering
    \includegraphics[width=2.0 in]{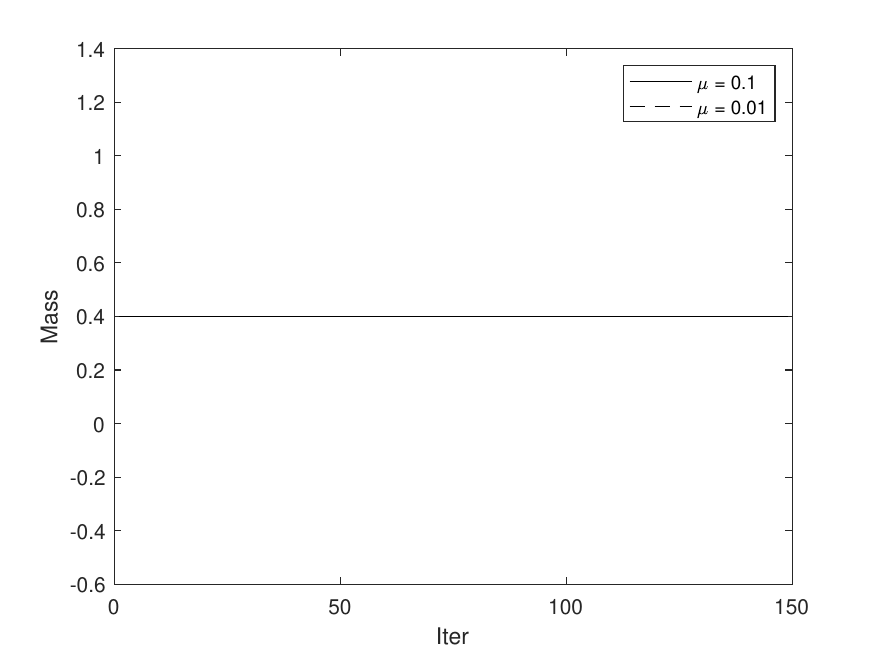}
\end{minipage}
\caption{Convergence histories of energy (left $\mu=0.1$, $\mu=0.01$) and mass (right) for Example 1 by Algorithm \ref{algCH}.}
\label{exp1CHobj} 
\end{figure}

\textbf{Example 2 (Bypass design)}: The second Example is the shape design for the bypass in 2d space. The computational domain is set to be a rectangle $\Omega=[0,1.5]\times[-0.5,0.5]$ (see Fig. \ref{designdomains2d} right) with two inlets and two outlets. The flow on the inlet is imposed by the prescribed function $\vu_d=(-50(y^2-0.35^2)(y^2-0.15^2),0)^{\rm T}$. 
\begin{itemize}
    \item At first, fix the volume target $V_0=0.85$ in which the solid phase occupies almost $57\%$ the volume of the whole domain. Consider the algorithm \ref{algAC} with the projected Allen-Cahn gradient flow scheme to evolve the phase field function. The basic parameters are given as follows: $\tau = 0.0005, \kappa_1=0.001, \kappa_2=0.1, \beta=500, S_0=1, S_1=0.5, \eta_1=90$ and $N_{\phi} = 10$. The initial phase field function is $\phi_0=\min({\rm abs}(y-0.3)-0.1,{\rm abs}(y+0.3)-0.1)$. The optimal distribution with single connected shape and the corresponding velocity fields are presented on the left and middle of Fig. \ref{exp2Evo} where the viscous coefficients $\mu = 0.1$ and $\mu = 0.01$. The convergence histories of total energy are shown in Fig. \ref{exp2obj} left demonstrating that the algorithm \ref{algAC} has the energy dissipative property. Meanwhile, the volume error (Fig. \ref{exp2obj} right) is well controlled almost by $0.01$.

    \item  Consider the  Cahn-Hilliard gradient flow algorithm \ref{algCH} for topology optimization. Choose the initial phase field function $\phi_0 = 0.5$. The basic parameters are given as follows: $\mu=0.01, \tau=0.00025, \kappa_1=0.001, \kappa_2=0.01, S_0=1.0, S_1=0.15$ and $\eta_1=4$. The other conditions are the same as the above. The optimal distribution and its corresponding velocity field exhibit the double channels in Fig. \ref{exp2Evo} right. The convergence histories of total energy is presented in the middle of Fig. \ref{exp2obj} showing the effectiveness of the algorithm \ref{algCH}.  Furthermore, different $\mu$ (see Fig. \ref{exp2CHEvo}) have been tested to display the optimal configurations and curves of total free energy.
\end{itemize}

\begin{figure}[htbp]
\begin{minipage}[b]{0.33\textwidth}
\centering
    \includegraphics[width=1.8 in]{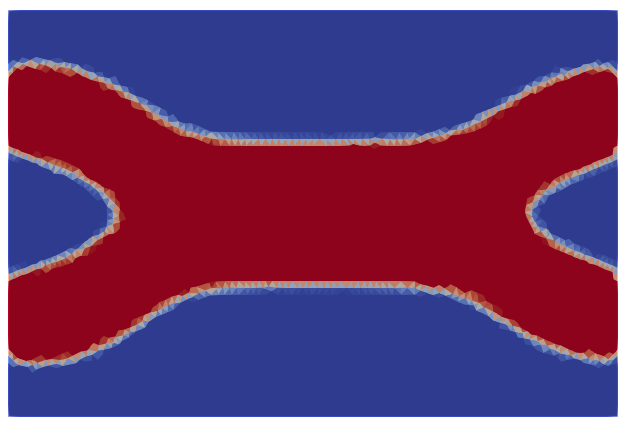}
\end{minipage}
\begin{minipage}[b]{0.33\textwidth}
\centering
    \includegraphics[width=1.8 in]{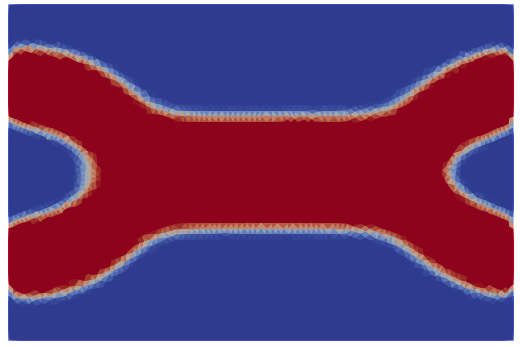}
\end{minipage}
\begin{minipage}[b]{0.33\textwidth}
\centering
    \includegraphics[width=1.8 in]{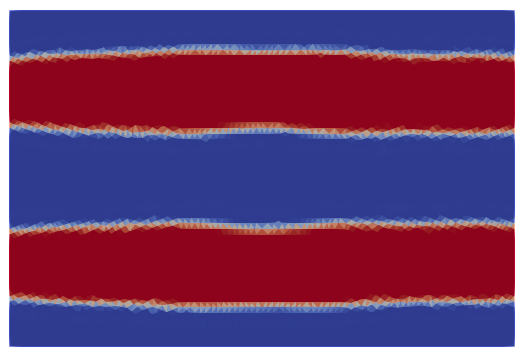}
\end{minipage}
\\
\begin{minipage}[b]{0.33\textwidth}
\centering
    \includegraphics[width=1.8 in]{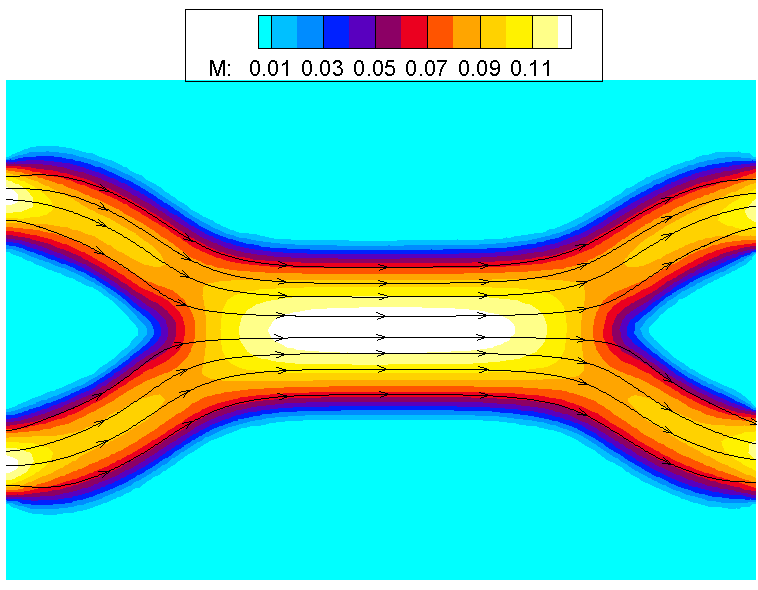}
\end{minipage}
\begin{minipage}[b]{0.33\textwidth}
\centering
    \includegraphics[width=1.8 in]{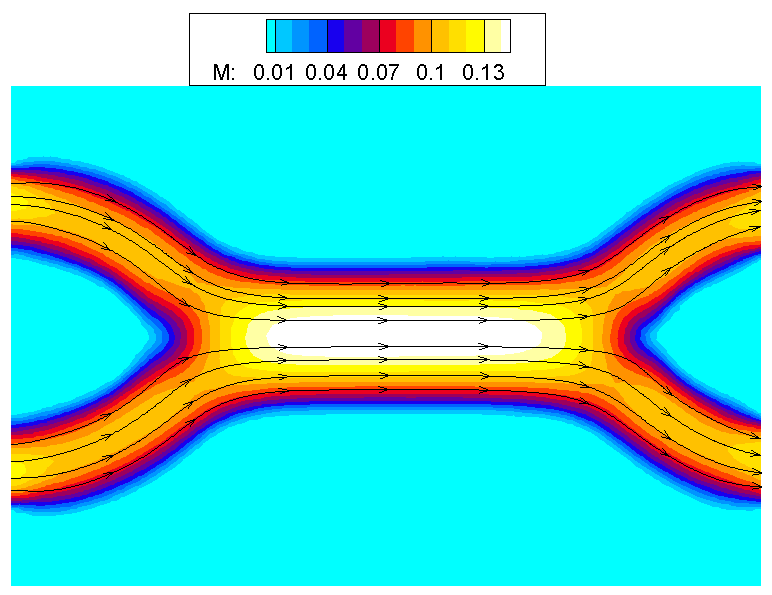}
\end{minipage}
\begin{minipage}[b]{0.33\textwidth}
\centering
    \includegraphics[width=1.8 in]{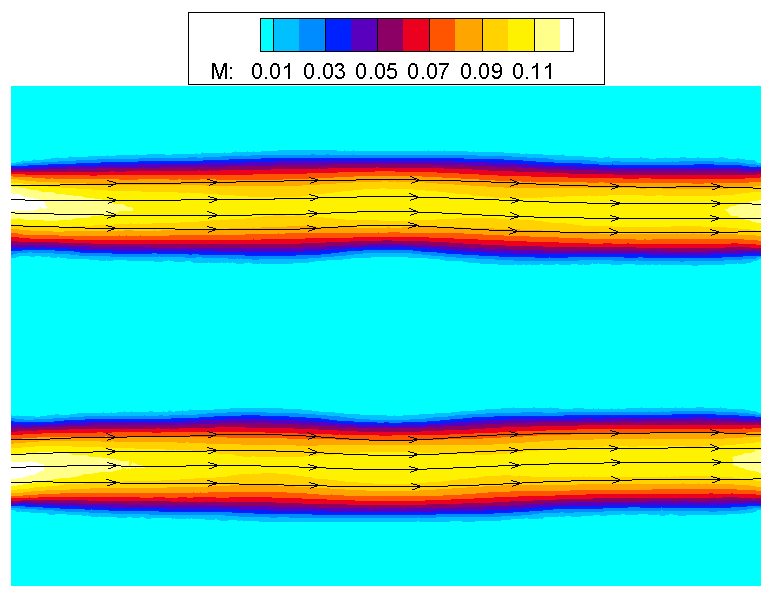}
\end{minipage}
\caption{Optimal distributions on line 1 and velocity fields on line 2 (Allen-Cahn: $\mu=0.1$ left, $\mu=0.01$ middle and Cahn-Hilliard: $\mu=0.01$ right) for Example 2.}
\label{exp2Evo} 
\end{figure}

\begin{figure}[htbp]
\begin{minipage}[b]{0.33\textwidth}
\centering
    \includegraphics[width=1.6 in]{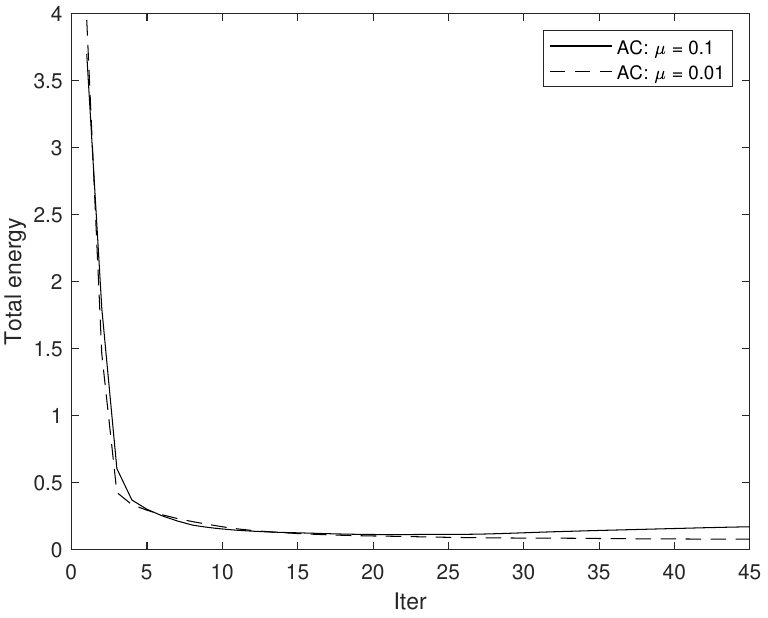}
\end{minipage}
\begin{minipage}[b]{0.33\textwidth}
\centering
    \includegraphics[width=1.6 in]{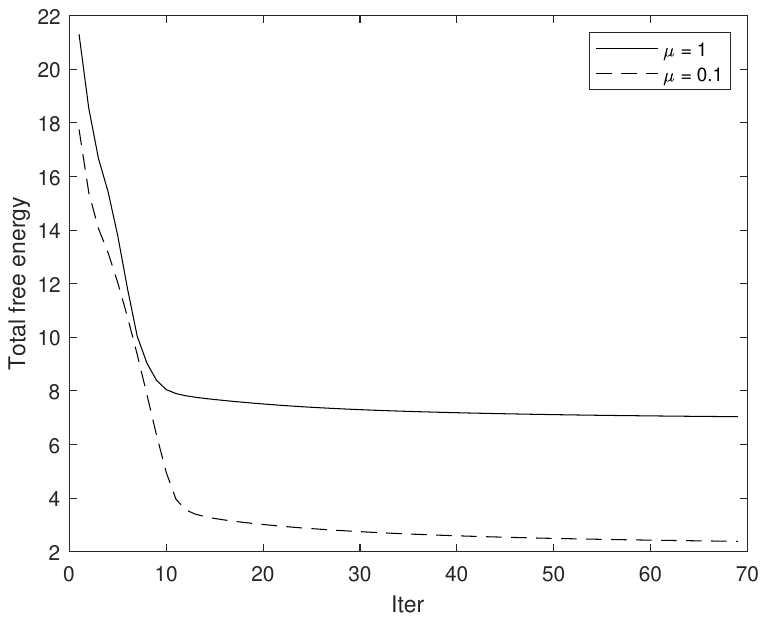}
\end{minipage}
\begin{minipage}[b]{0.33\textwidth}
\centering
    \includegraphics[width=1.6 in]{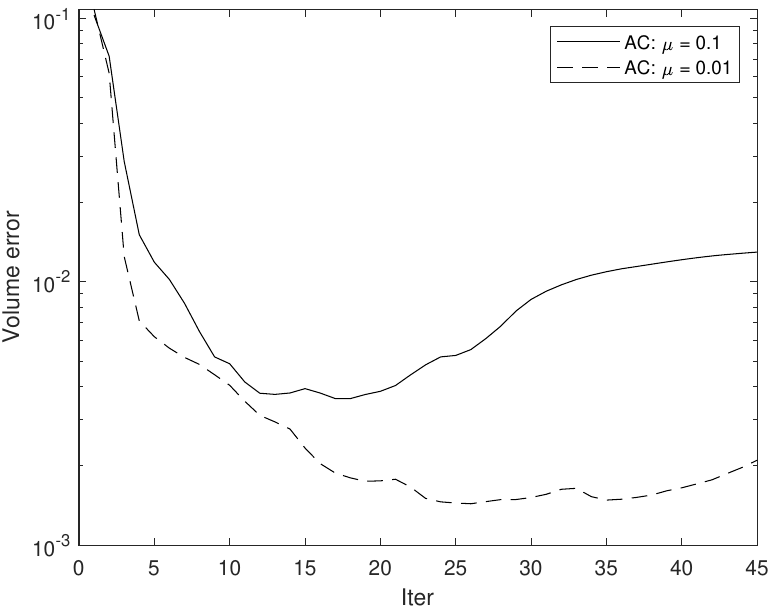}
\end{minipage}
\caption{The convergence histories of total energy (Allen-Cahn: left, Chan-Hilliard: middle) and volume errors for Allen-Cahn (right) for Example 2.}
\label{exp2obj} 
\end{figure}

\begin{figure}[htb]
\begin{minipage}[b]{0.33\textwidth}
\centering
    \includegraphics[width=1.7 in]{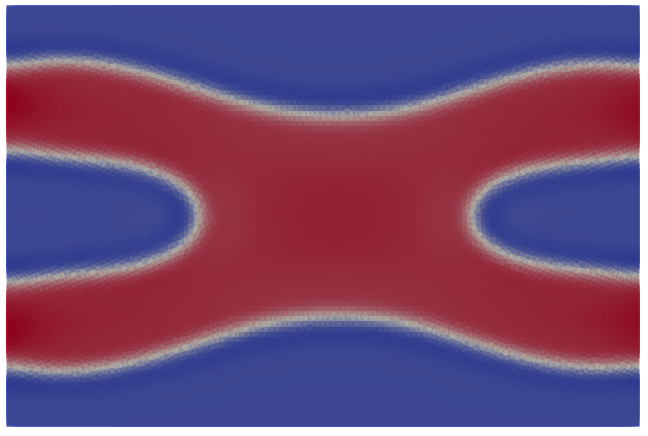}
\end{minipage}
\begin{minipage}[b]{0.33\textwidth}
\centering
    \includegraphics[width=1.7 in]{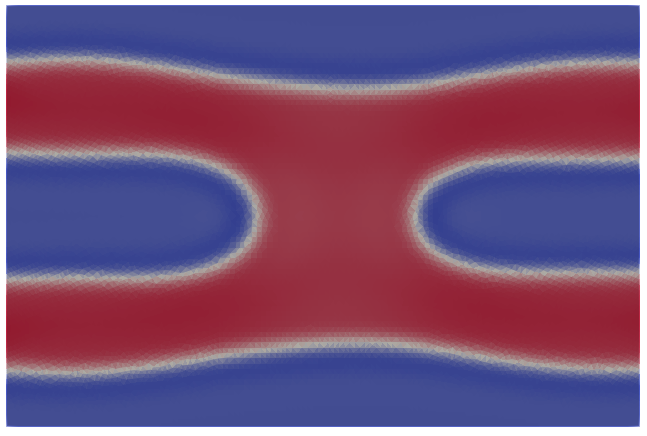}
\end{minipage}
\begin{minipage}[b]{0.33\textwidth}
\centering
    \includegraphics[width=1.5 in]{Exp2CHObj.pdf}
\end{minipage}
\caption{Optimal distributions (Cahn-Hilliard: $\mu=1$ left, $\mu=0.1$ middle and convergence histories right) for Example 2.}
\label{exp2CHEvo} 
\end{figure}

\textbf{Example 3 (Bypass design in 3d)}: This example is solved by Algorithm \ref{algAC}. See Fig. \ref{DesignDomain3d} for the domains to design the internal flow channels.
\begin{itemize}
    \item (A). The design domain is the Fig. \ref{DesignDomain3d} left. The velocity on the inlet is $\vu_d = (0,0,-1)^{\rm T}$. Set the volume target 0.85 for the solid phase. The basic parameters are given as follows: $\tau =0.0025, \epsilon_1 = 0.0001, s_0=1, \epsilon_2 = 0.1, \beta = 10, \eta_1=10$ and $N_{\phi}$=10. The optimal distribution from two directions and corresponding velocity fields are displayed in Fig. \ref{Exp3AOptPhiVel} with different viscous coefficients. The curves of convergence histories for both total energy and volume in Fig. \ref{objExp3A} shows the energy dissipative for the scheme in 3d space.

    \item  (B). The design domain is the Fig. \ref{DesignDomain3d} right. The velocity on the inlet is $\vu_d = [-(50(y-0.35)(y-0.15)(y+0.35)(y+0.15)),0,0]^{\rm T}$. The basic parameters are given as follows: $\tau =0.001, \epsilon_1 = 0.001, \epsilon_2 = 0.1, \beta = 250, \eta_1=10, V_0=0.55|\Omega|$, $N_{\phi}=15$, $S_0=1$ and $S_1=0.5$. The optimal distribution from two directions and corresponding velocity fields are displayed in Fig. \ref{Exp3BOptPhiVel} with different viscous coefficients. The curves of convergence histories for both total energy and volume in Fig. \ref{objExp3B} shows the energy dissipative for the scheme in 3d space.
\end{itemize}
\begin{figure}[htbp]
\centering
\begin{tikzpicture}[scale=2.5]
\draw[thick] (0.0,0.0) -- (1.0,0.0);
\draw[thick] (0.0,0.0) -- (0.0,1.0);

\draw[thick] (0.0,1.0) -- (1.0,1.0);
\draw[thick] (0.0,1.0) -- (0.353,1.353);
\draw[thick] (1.0,1.0) -- (1.353,1.353);
\draw[thick] (0.353,1.353) -- (1.353,1.353);
\draw[thick] (1.353,1.353) -- (1.353,0.353);
\draw[thick] (1.0,0.0) -- (1.353,0.353);
\draw[dashed,thick] (0.353,1.353) -- (0.353,0.353);
\draw[dashed,thick] (0.0,0.0) -- (0.353,0.353);
\draw[dashed,thick] (1.353,0.353) -- (0.353,0.353);


\draw[thick, fill=gray!30!white] (0.353/3+0.45,1.353/3+0.8) -- (1.353/3+0.45,1.353/3+0.8) -- (1.0/3+0.45,1.0/3+0.8) -- (0.0+0.45,1.0/3+0.8) -- cycle;

\draw[thick,dashed, fill=gray!30!white] (0.353/4+0.2,1.353/4-0.2) -- (1.353/4+0.2,1.353/4-0.2) -- (1.0/4+0.2,1.0/4-0.2) -- (0.0+0.2,1.0/4-0.2) -- cycle;

\draw[thick,dashed,fill=gray!30!white] (0.353/4+0.75,1.353/4-0.05) -- (1.353/4+0.75,1.353/4-0.05) -- (1.0/4+0.75,1.0/4-0.05) -- (0.0+0.75,1.0/4-0.05) -- cycle;

\draw[thick] (1.0,0.0) -- (1.0,1.0);



\draw (0.65,1.0/3+0.9) node[thick, scale=1.0] {$\Gamma_{\rm in}$};
\draw (0.88,0.25-0.15)  node[thick, scale=1.0] {$\Gamma_{\rm out,2}$};
\draw (0.52,0.25-0.06)  node[thick, scale=1.0] {$\Gamma_{\rm out,1}$};

\end{tikzpicture}
\qquad
\begin{tikzpicture}[scale=2.7]
	\draw[thick, fill=white] (0.0,0.0) -- (1.0,0.0) -- (1.0,0.2) -- (0.0,0.2) -- cycle;
	\draw[dashed, fill=white] (1.0,1.0) -- (2.0,1.0) -- (2.0,1.2) -- (1.0,1.2) -- cycle;
	\draw[thick] (1.0,1.2) -- (2.0,1.2) -- (2.0,1.0);
	\draw[thick] (0.0,0.2) -- (1.0,1.2);
	\draw[dashed] (0.0,0.0) -- (1.0,1.0);
	\draw[thick] (1.0,0.2) -- (2.0,1.2);
	\draw[thick] (1.0,0.0) -- (2.0,1.0);
	\draw[thick, fill=gray!30!white] (0.2,0.05)--(0.4,0.05)--(0.4,0.15)--(0.2,0.15) -- cycle;
	\draw[thick, fill=gray!30!white] (0.6,0.05)--(0.8,0.05)--(0.8,0.15)--(0.6,0.15) -- cycle;
	\draw[dashed, fill=gray!30!white] (1.2,1.05)--(1.4,1.05)--(1.4,1.15)--(1.2,1.15) -- cycle;
	\draw[dashed, fill=gray!30!white] (1.6,1.05)--(1.8,1.05)--(1.8,1.15)--(1.6,1.15) -- cycle;
	\draw (0.5,0.1) node[scale=1.0] {$\Gamma_{\text{in}}$};
	\draw (1.5,1.1) node[scale=1.0] {$\Gamma_{\text{out}}$};
	\end{tikzpicture}
 	\caption{The design domains for different Examples in 3d space.}
  	\label{DesignDomain3d} 
\end{figure}
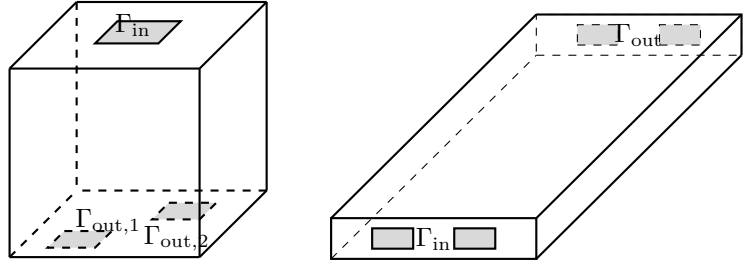
\begin{figure}[htbp]
	\begin{minipage}[b]{0.5\textwidth}
		\centering
		\includegraphics[width=2.1 in]{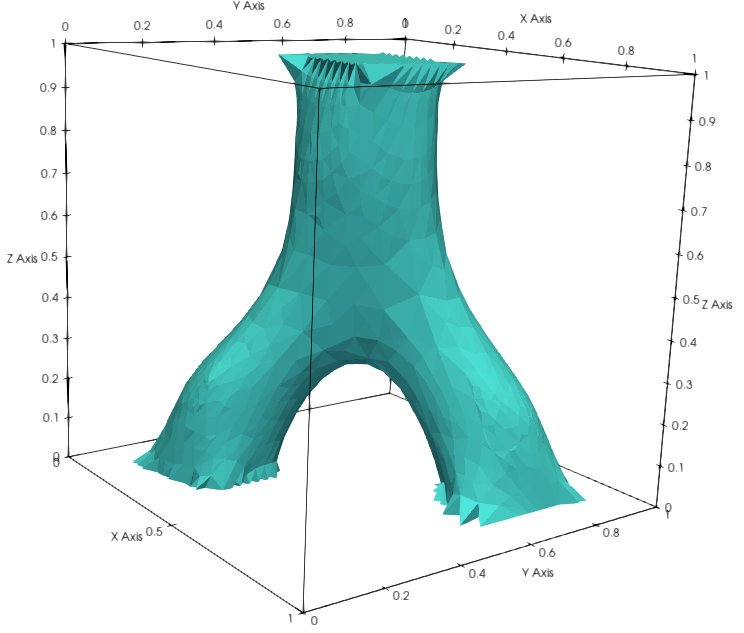}
	\end{minipage}
    \begin{minipage}[b]{0.5\textwidth}
		\centering
		\includegraphics[width=1.8 in]{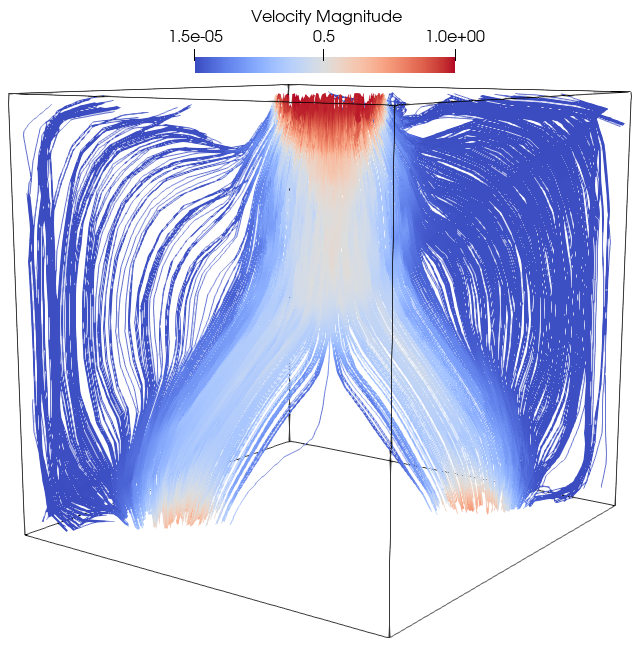}
	\end{minipage}
 \\
 	\begin{minipage}[b]{0.5\textwidth}
		\centering
		\includegraphics[width=2.1 in]{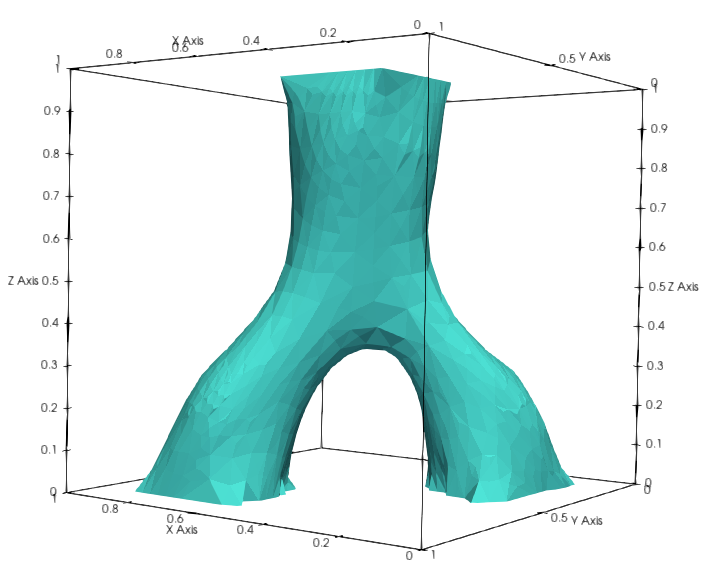}
	\end{minipage}
    \begin{minipage}[b]{0.5\textwidth}
		\centering
		\includegraphics[width=1.8 in]{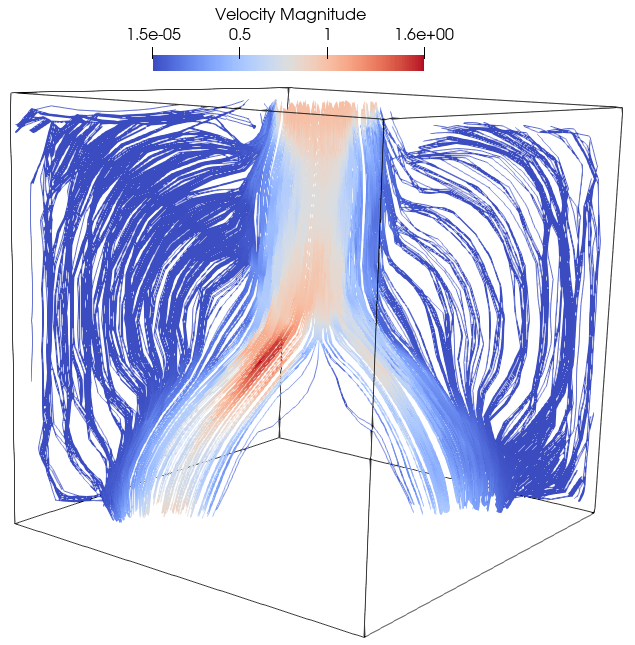}
	\end{minipage}
	\caption{Optimized designs with 36864 tetrahedron elements and corresponding velocity fields of Example 3 (A) (line 1 for $\mu=0.1$ and line 2 for $\mu=0.01$).}
	\label{Exp3AOptPhiVel} 
\end{figure}

 \begin{figure}[htbp]
\begin{minipage}[b]{0.5\textwidth}
\centering
    \includegraphics[width=2.0 in]{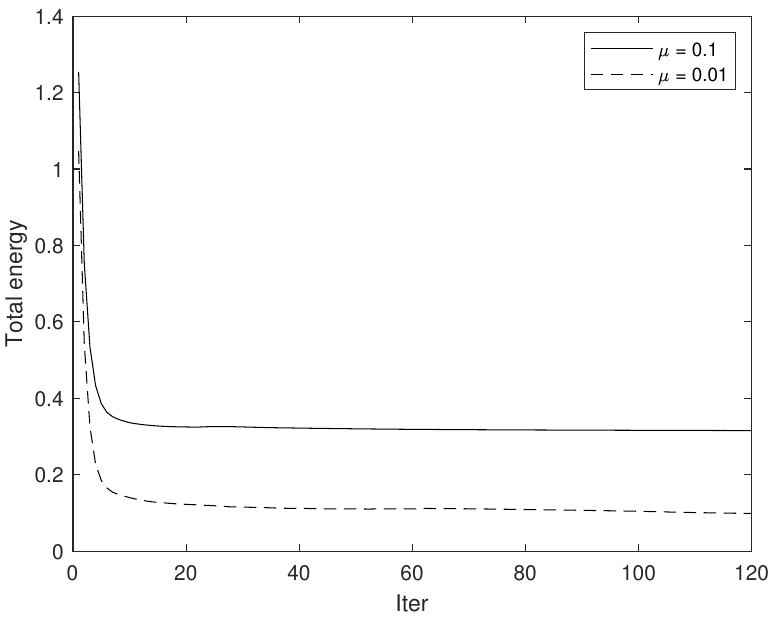}
\end{minipage}
\begin{minipage}[b]{0.5\textwidth}
\centering
    \includegraphics[width=2.0 in]{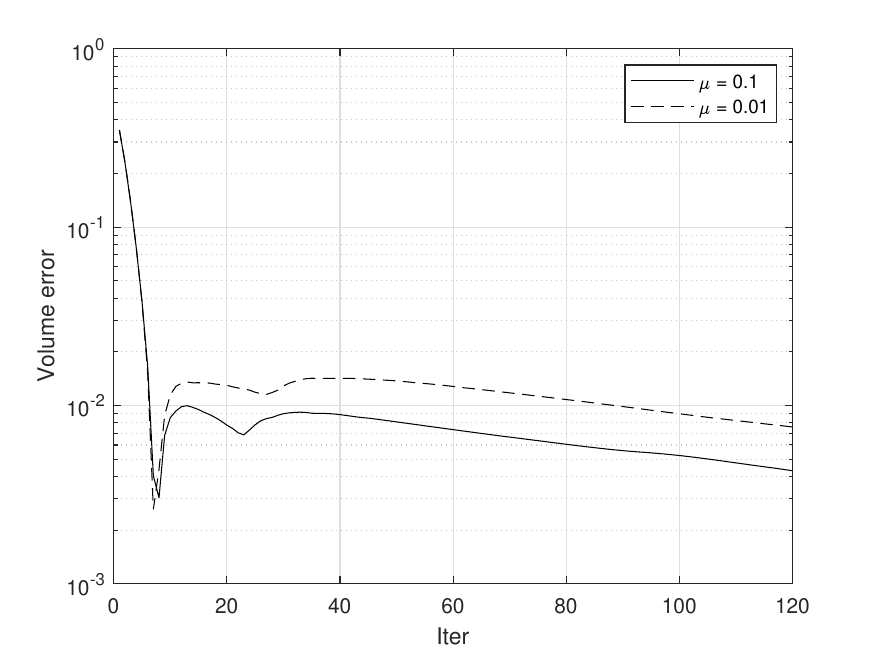}
\end{minipage}
\caption{Convergence histories of the total energy and volume error for Example 3 (A).}
\label{objExp3A} 
\end{figure}

\begin{figure}[htbp]
	\begin{minipage}[b]{0.5\textwidth}
		\centering
		\includegraphics[width=2.3 in]{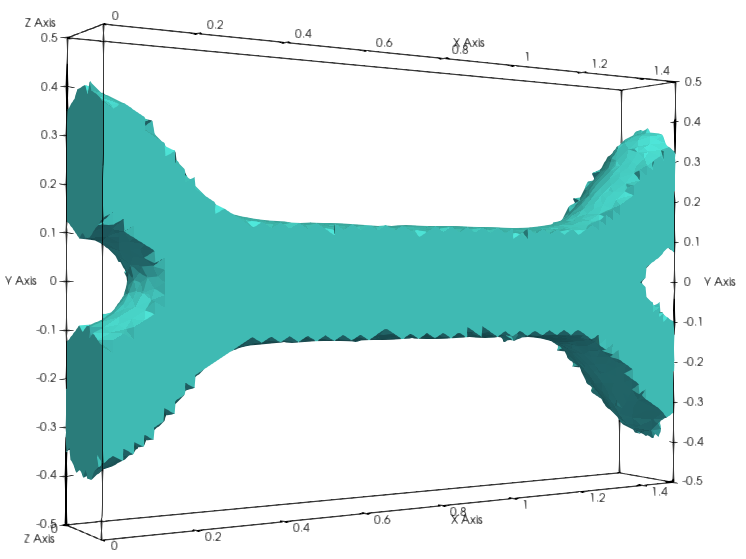}
	\end{minipage}
    \begin{minipage}[b]{0.5\textwidth}
		\centering
		\includegraphics[width=2.2 in]{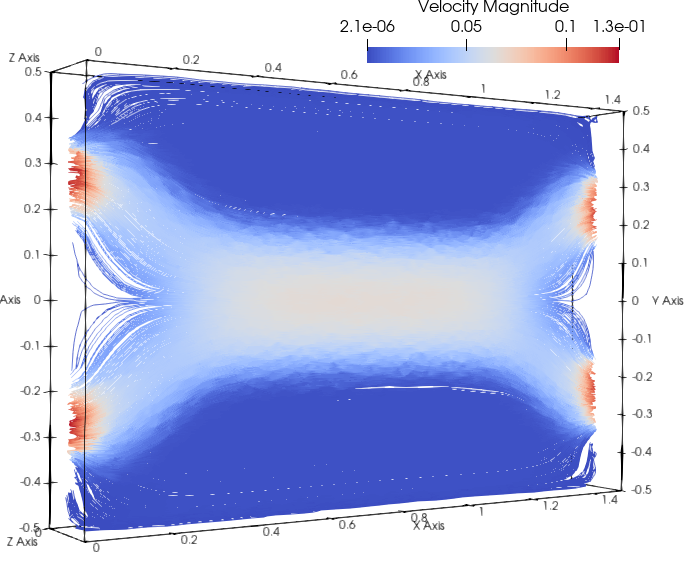}
	\end{minipage}
 \\
	\begin{minipage}[b]{0.5\textwidth}
		\centering
		\includegraphics[width=2.3 in]{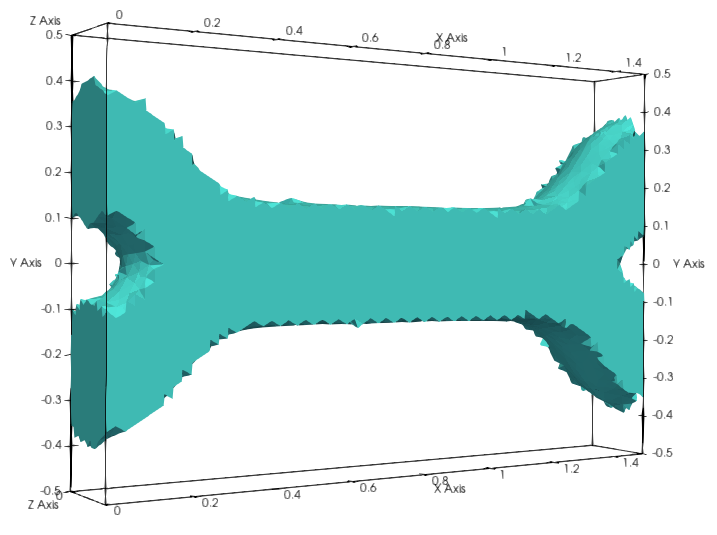}
	\end{minipage}
    \begin{minipage}[b]{0.5\textwidth}
		\centering
		\includegraphics[width=2.2 in]{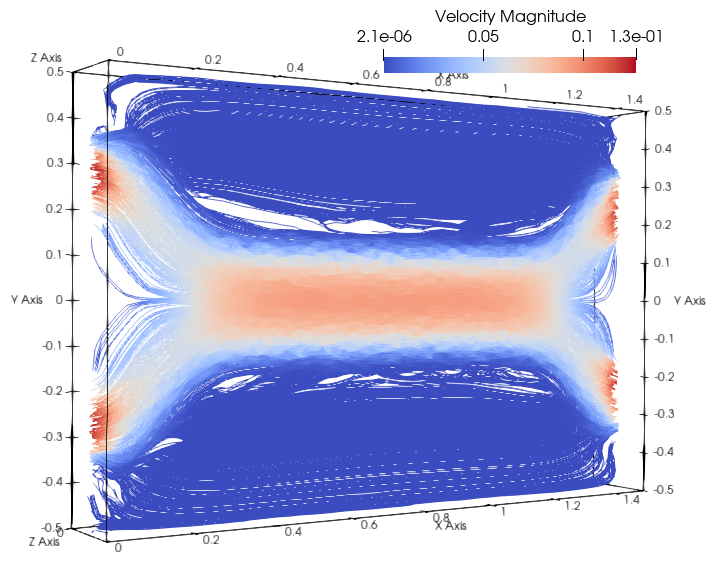}
	\end{minipage}
	\caption{Optimal designs with 89711 tetrahedron elements and corresponding velocity fields of Example 3 (A) (line 1 for $\mu=0.1$ and line 2 for $\mu=0.01$).}
	\label{Exp3BOptPhiVel} 
\end{figure}

\begin{figure}[htbp]
\begin{minipage}[b]{0.33\textwidth}
\centering
    \includegraphics[width=1.8 in]{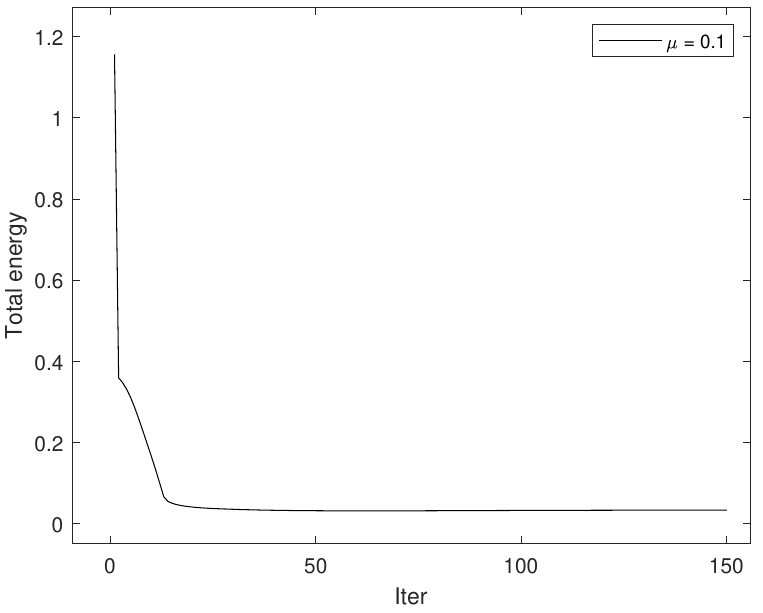}
\end{minipage}
\begin{minipage}[b]{0.33\textwidth}
\centering
    \includegraphics[width=1.8 in]{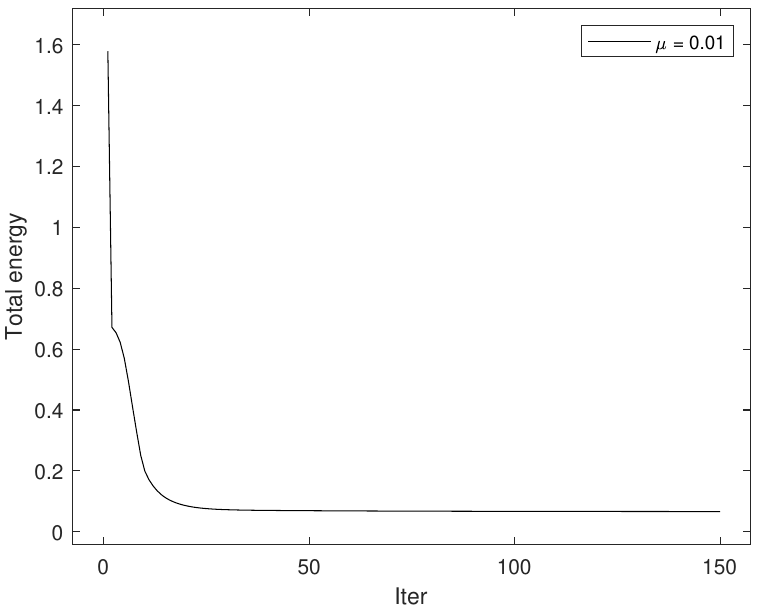}
\end{minipage}
\begin{minipage}[b]{0.33\textwidth}
\centering
    \includegraphics[width=1.8 in]{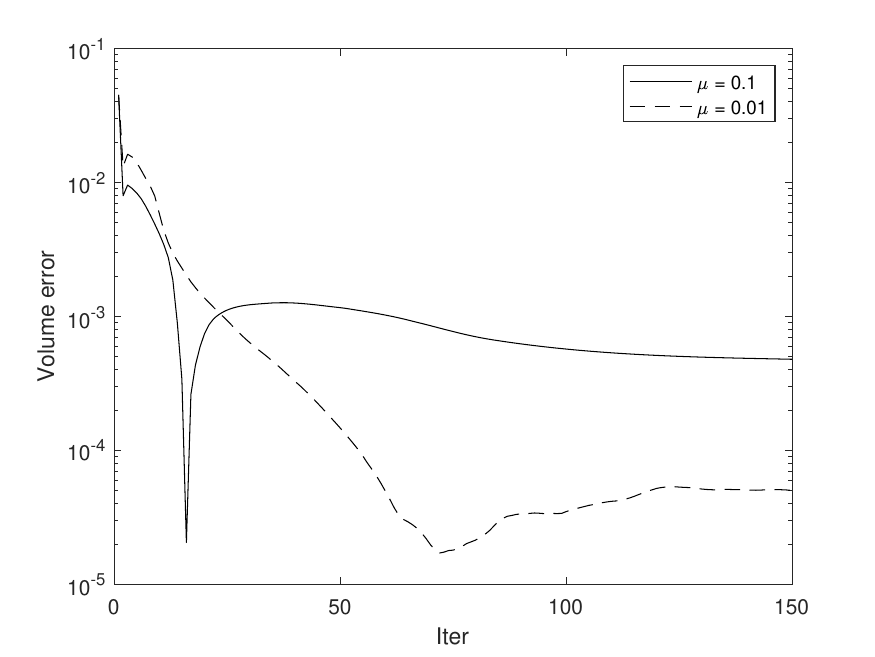}
\end{minipage}
\caption{Convergence histories of the total energy and volume error for Example 3 (B).}
\label{objExp3B} 
\end{figure}
\section{Conclusion}
Topology optimization in incompressible Navier-Stokes equations has been considered using a phase field model. We propose the novel stabilized semi-implicit schemes of the gradient flow in Allen-Cahn and Cahn-Hilliard types for solving the resulting optimal control problem. The unconditional energy stability is shown for the gradient flow schemes in both continuous and discrete spaces by the Lipschtiz continuity. Numerical examples of computational fluid dynamics show effectiveness and robustness of the optimization algorithm proposed. The stabilized gradient flow scheme is the constant coefficient equation that can be solved efficiently. The scheme keeping energy dissipation is simple to realize and may can be extended to other topology optimization models with general objective functionals such as geometric inverse problems or with nonlinear physical constraints.

\end{document}